\documentclass[a4paper]{amsart}

\usepackage{amsmath, amssymb, amsthm}
\usepackage{lineno
}
\usepackage{enumerate}

\usepackage{mathdots}
\usepackage{bm}

\usepackage{comment}
\usepackage{float}

\newtheorem{theorem}{Theorem}[section]

\newtheorem{cor}[theorem]{Corollary}

\newtheorem*{Thm1}{Theorem 3.6}
\newtheorem*{cor1}{Corollary 4.11}

\newtheorem{prop}[theorem]{Proposition}
\newtheorem{lem}[theorem]{Lemma}

\newtheorem{example}[theorem]{Example}

\usepackage{graphicx}

\usepackage{tikz}
\usetikzlibrary{positioning, shapes.callouts}
\usepackage{pxpgfmark}

\allowdisplaybreaks

\begin{document}
	
		\title[Noncompact Iwasawa factorization and affine spheres]{Noncompact Iwasawa factorization and translationally equivariant hyperbolic affine spheres}
		\author{Tadashi Udagawa}
	    \date{}
	    \maketitle
		
		\begin{abstract}
		 We establish the noncompact Iwasawa factorization for a Delaunay-type potential associated with affine spheres on the complex plane away from countably many lines. Using the DPW method, we give an explicit description of the factorization in terms of Weierstrass elliptic functions via a reduction to a linear system related to the Tzitz\'eica equation. As an application, we construct explicit translationally equivariant hyperbolic affine spheres and classify them according to their slice curves. In particular, we show that every such affine sphere is equiaffinely equivalent to one whose slice curve is a circle, hyperbola, or parabola, consistent with the Calabi correspondence between hyperbolic affine spheres and proper convex cones.
		\end{abstract}
		\vspace{10pt}

    \flushleft{{\it Keywords:} Iwasawa factorization, the DPW method, affine spheres, Weierstrass functions, Tzitz\'eica equation}

	
\section{Introduction}
The Iwasawa factorization of loop groups plays a central role in the DPW method for constructing harmonic maps into symmetric spaces, introduced by J. Dorfmeister, F. Pedit, and H. Wu \cite{DPW1998}. Given a \(\Lambda \mathfrak{sl}_n\mathbb{C}\)-valued 1-form \(\xi\) on \(\mathbb{C}\), the factorization of a solution \(\phi\) to \(d\phi=\phi\xi\) yields a lift of a harmonic map and is also used to construct constant mean curvature (CMC) surfaces. A key difficulty is that the Iwasawa factorization is not globally defined in general: it is global in the compact case \cite{DPW1998}, but only local near the identity element in the noncompact case \cite{BD2001}. Concerning existence results, Dorfmeister, M. Guest, and W. Rossman established it for \(n=2\) Smyth-type potentials \cite{DGR2010}, and the factorization was described using Bessel functions in \cite{U2024}. For \(n=2\) Delaunay-type potentials, Brander, Rossman, and Schmitt \cite{BRS2010} proved local existence near \(t=0\), while a global extension to \(\mathbb{C}\) away from countably many lines, together with an explicit description in terms of the Lamé equation, was obtained in \cite{U2026}.\vskip\baselineskip

In this paper, we extend these results to the \(n=3\) case of the Delaunay-type potential
\begin{equation}
	\xi =
	\begin{pmatrix}
		m & \lambda^{-1}ia^{-1} & \lambda ia^2 \\
		\lambda ia^{-1} & 0 & \lambda^{-1}ia^{-1} \\
		\lambda^{-1}ia^2 & \lambda ia^{-1} & -m
	\end{pmatrix} dt,\ \ \ \ \ a>0,\ m \in \mathbb{R}.
	\nonumber
\end{equation}
We show that the corresponding \(\phi\) admits an Iwasawa factorization on \(\mathbb{C}\) away from countably many lines (Theorem \ref{thm3.4}), and provide an explicit description in terms of the Weierstrass functions \(\wp\), \(\zeta\), and \(\sigma\) (Propositions \ref{prop4.3} and \ref{prop4.5}). A key step is that the existence of the Iwasawa factorization reduces to the solvability of a linear differential equation of the form
\begin{equation}
	(\tilde{\Phi}_+^{-1})_t =
	\tilde{\Phi}_+^{-1}\left(\begin{array}{ccc}
		-\frac{\wp_t}{\wp - M} & 0 & -2\lambda ia^2 \\
		-2\lambda ia^{-1} & 0 & 0 \\
		0 & -2\lambda ia^{-1} & \frac{\wp_t}{\wp - M}
	\end{array}\right),\ \ \ \ \ \tilde{\Phi}_+^{-1}(0,\lambda) = I_3, \tag{A}
\end{equation}
where \(M = \frac{1}{3}(m^2 - a^4 - 2a^{-2})\). Here \(\wp\) is the Weierstrass elliptic function. In the DPW method for hyperbolic affine spheres, the Gauss-Codazzi equation associated to \(\xi\) reduces to the Tzitz\'eica equation
\begin{equation}
	u_{t\overline{t}} = a^{-2}e^u - a^4e^{-2u}.
	\nonumber
\end{equation}
Moreover, by the reality condition, the solution can be expressed in terms of the Weierstrass elliptic function \(\wp\). Here, \(\beta\) is a solution to \(\wp(\beta) = 2a^{-2} + M, \wp'(\beta) =  4a^{-2}m\) and \(\tilde{\Lambda}\) is the lattice defined in Section 3.1.
\begin{Thm1}
	Let \(M = \frac{1}{3}(m^2 - a^4 - 2a^{-2})\) and \(\tilde{\Phi}_+^{-1}\) a solution of (\ref{A'}) and define \(\mathcal{E}(x) = {\rm diag}(\varepsilon(x),1,\varepsilon(x))\), where
	\begin{equation}
		\varepsilon(x) = \left\{
		\begin{array}{cc}
			1 & {\rm if}\ \wp(x+\beta) - M > 0,\\
			-1 & {\rm if}\ \wp(x+\beta) - M < 0
		\end{array},\ \ \ \ \ x \in \mathbb{R}.\nonumber
		\right.
	\end{equation}
	Then \(\phi(t,\lambda)\) admits an Iwasawa factorization \(\phi = F\phi_+\) on a simply-connected subspace of
	\begin{equation}
		\mathbb{C} \backslash \{t = x+iy \in \mathbb{C}\ | \ x \in \tilde{\Lambda},\ y \in \mathbb{R}\}, \nonumber
	\end{equation}
	where \(F = F(t,\overline{t},\lambda) \in (\Lambda G)_{\sigma}\) and \(\phi_+ = \phi_+(t,\overline{t},\lambda) \in (\Lambda^+_B {\rm SL}_3 \mathbb{R})_{\sigma}\) are given by
	\begin{align}
		&F(t,{\bar t},\lambda) = \phi(iy,\lambda)\tilde{F}(x,\lambda)\mathcal{E}(x)g(x)^{\frac{1}{2}}
		\in (\Lambda G)_{\sigma}
		, \nonumber\\ 
		&\phi_+(t,{\bar t},\lambda) = 
		g(x)^{-\frac{1}{2}}\mathcal{E}(x)\tilde{\Phi}_+(x,\lambda) 
		\in (\Lambda^+_B {\rm SL}_3 \mathbb{C})_{\sigma}
		. \nonumber
	\end{align}
	\normalsize
\end{Thm1}\vskip\baselineskip

As an application, we obtain explicit descriptions of translationally equivariant hyperbolic affine spheres and classify them according to their slice curves, which turn out to be circles, hyperbolas, or parabolas (Theorem \ref{thm4.6}). Although explicit parametrizations of affine spheres are generally difficult to obtain, such structures have been studied via Wang's equation (Tzitz\'eica equation) and convex cone geometry, particularly in relation to self-associated cones arising from \(S^1\)-families of cubic differentials (Z. Lin, E. Wang \cite{LW2016}; U. Simon, C. P. Wang \cite{SW1993}). In \cite{H2014}, \cite{H2022}, R. Hildebrand classified such cones and obtained explicit isothermal parametrizations in terms of Painlev\'e transcendents, and Lin, Wang \cite{LW2016} further computed isothermal parametrizations for Hildebrand's examples. In contrast, our approach is based on the DPW method and a detailed analysis of the noncompact Iwasawa factorization associated with a Delaunay-type potential.\vskip\baselineskip

As a corollary of Theorem \ref{thm4.6}, we obtain a classification of translationally equivariant hyperbolic affine spheres according to their slice curves (Corollary \ref{cor4.7}):
\begin{cor1}
	Every translationally invariant solution of the Tzitz\'eica equation gives rise to a hyperbolic affine sphere equiaffinely equivalent to an affine sphere whose slice curve is a circle, a hyperbola, or a parabola.
\end{cor1}\vskip\baselineskip

This classification is consistent with the Calabi correspondence between hyperbolic affine spheres and proper convex cones in \(\mathbb{R}^3\). Up to equiaffine equivalence, the slice curves arising from translationally invariant solutions of the Tzitz\'eica equation are circles, hyperbolas, or parabolas.\vskip\baselineskip

The organization of this paper is as follows. In Section 2, we review affine geometry and the Tzitz\'eica equation, together with the DPW construction for hyperbolic affine spheres. In Section 3.1, we show that the Delaunay-type potential induces a solution of the Tzitz\'eica equation and yields a translationally equivariant hyperbolic affine sphere. In Section 3.2, we reduce the existence of the Iwasawa factorization to a differential equation and prove its existence on \(\mathbb{C}\) away from countably many lines (Theorem \ref{thm3.4}). In Section 4.1, we derive a scalar reduction and express solutions in terms of the Weierstrass functions \(\wp\), \(\zeta\), and \(\sigma\). In Section 4.2, we give explicit descriptions of translationally equivariant hyperbolic affine spheres (Propositions \ref{prop4.3} and \ref{prop4.5}), and classify them according to the sign of \(M = \frac{1}{3}(m^2 - a^4 - 2a^{-2})\) (Theorem \ref{thm4.6}). As a corollary, these surfaces are classified by their slice curves, which are circles, hyperbolas, or parabolas (Corollary \ref{cor4.7}).

\section{Preliminaries}
\subsection{Affine geometry and Tzitz\'eica equation}
In this section, we briefly review the relationship between the Tzitz\'eica equation and hyperbolic affine spheres. Similar arguments were given by Wang \cite{F1991}. Starting from a solution of the Tzitz\'eica equation, we recall the associated family of flat connections and the construction of the corresponding hyperbolic affine sphere.\vskip\baselineskip

Let \(U \subset \mathbb{C}\) be a simply-connected domain and let \(w:\mathbb{C} \to \mathbb{R}\) be a solution to the Tzitz\'eica equation
\begin{equation*}
	w_{t\overline{t}} = e^w - e^{-2w}. 
\end{equation*}
It is well known that such a solution determines a hyperbolic affine sphere. \vskip\baselineskip

Define the \(\mathfrak{sl}_3\mathbb C\)-valued 1-form
\begin{equation*}
	\alpha = \left(\begin{array}{ccc}
		\frac{w_t}{2} & \lambda^{-1}e^{\frac{w}{2}} & 0 \\
		0 & 0 & \lambda^{-1}e^{\frac{w}{2}} \\
		\lambda^{-1}e^{-w}  & 0 & -\frac{w_t}{2}
	\end{array}\right)dt + \left(\begin{array}{ccc}
		-\frac{w_{\overline{t}}}{2} & 0 & \lambda e^{-w} \\
		\lambda e^{\frac{w}{2}} & 0 & 0 \\
		0 & \lambda e^{\frac{w}{2}}  & \frac{w_{\overline{t}}}{2}
	\end{array}\right)d\overline{t}.
\end{equation*}

The Tzitz\'eica equation is equivalent to the flatness condition \(d\alpha + \alpha \wedge \alpha = 0\). Let \(F:\mathbb{C} \to {\rm SL}_3 \mathbb{C}\) be the solution of \(\alpha = F^{-1}dF,\ F(0)=I_3\) and define \(f = TF\begin{pmatrix}
	0 \\ 1 \\ 0
\end{pmatrix}\), where \(\omega^3 = 1\) and
\begin{equation}
	T = \frac{1}{\sqrt{3}}\left(\begin{array}{ccc}
		1 & 1 & 1 \\
		\omega & 1 & \omega^2 \\
		\omega^2 & 1 & \omega
	\end{array}\right). \nonumber
\end{equation}
Then \(f\) is an affine sphere.

\begin{prop}\label{prop2.1}
	\(f = f(t,\overline{t},\lambda):U \rightarrow \mathbb{R}^3\) is a hyperbolic affine sphere for all \(\lambda \in S^1\), with affine metric
	\begin{equation*}
		h(\partial_t,\partial_t) = h(\partial_{\overline{t}},\partial_{\overline{t}}) = 0, h(\partial_t,\partial_{\overline{t}}) = e^w
	\end{equation*}
	and affine normal vector field \(\xi = -f\).
\end{prop}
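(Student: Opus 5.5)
The plan is to read off the frame equations from \(\alpha = F^{-1}dF\), applied to the column \(e_2 = (0,1,0)^T\). Write \(F = (F_1,F_2,F_3)\) in columns. Then \(dF = F\alpha\) gives
\begin{align*}
F_t &= F\alpha', & F_{\overline t} &= F\alpha'',
\end{align*}
where \(\alpha = \alpha'dt + \alpha''d\overline t\). Since \(f = TF_2\), the \(dt\)-part of \(\alpha\) gives \(f_t = \lambda^{-1}e^{w/2}\,TF_1\), and the \(d\overline t\)-part gives \(f_{\overline t} = \lambda e^{w/2}\,TF_3\).

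Differentiating once more and using \(\alpha\) again, I expect
\[
f_{t\overline t} = \lambda^{-1}\big(e^{w/2}\big)_{\overline t}TF_1 + \lambda^{-1}e^{w/2}\,T\Big(-\tfrac{w_{\overline t}}{2}F_1 + \lambda e^{w/2}F_2\Big) = e^{w}f,
\]
because the \(F_1\) terms cancel. Similarly, \(f_{tt}\) is a combination of \(f_t\) (with coefficient \(w_t\)) and of \(\lambda^{-2}e^{w}TF_3 = \lambda^{-3}e^{w/2}f_{\overline t}\), and \(f_{\overline t\,\overline t}\) is the conjugate analogue.

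The first step is reality. I need \(f\) to be \(\mathbb{R}^3\)-valued for \(\lambda\in S^1\). The plan is to show that \(\overline{\alpha}\) equals \(P\alpha P\) with \(P\) the antidiagonal permutation swapping \(e_1\) and \(e_3\), using \(\overline\lambda = \lambda^{-1}\) and that \(w\) is real. Then \(\overline F = PFP\), because both sides solve the same equation with the same initial value. Next I will check \(\overline T = TP\): conjugation swaps \(\omega\) and \(\omega^2\), which swaps the first and third columns of \(T\). Combining these, \(\overline f = \overline T\,\overline F e_2 = TP\,PFPe_2 = TFe_2 = f\), since \(Pe_2 = e_2\). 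This step needs careful index bookkeeping, and I expect it to be the main obstacle.

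The second step is the affine data. Here \(f, f_t, f_{\overline t}\) are \(T\) applied to \(F_2, F_1, F_3\), scaled by \(\lambda^{\mp1}e^{w/2}\). Since \(\det F = 1\) and \(\det T\) is a nonzero constant, \(f\) is transversal to its tangent plane, so \(f\) is a centroaffine immersion. With the transversal field \(\xi=-f\), the Gauss formula \(D_XY = \nabla_XY + h(X,Y)\xi\) gives the following:
\begin{itemize}
\item The second derivatives above yield \(h(\partial_t,\partial_{\overline t}) = e^{w}\).
\item They also yield \(h(\partial_t,\partial_t)=h(\partial_{\overline t},\partial_{\overline t})=0\), because \(f_{tt}\) and \(f_{\overline t\,\overline t}\) have no \(f\)-component.
\end{itemize}
Thus \(h\) is definite. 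The shape operator is the identity, since \(D\xi = -df\).

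It remains to confirm that \(-f\) is the Blaschke normal, i.e. to check the volume condition. The induced volume \(\det(f_t,f_{\overline t},\xi)\) equals \(-\det T\cdot\det(F_1,F_3,F_2)\cdot e^{w}\), and this is a constant multiple of \(e^{w}\). The metric volume \(\sqrt{|\det h|}\) is also proportional to \(e^{w}\). Matching them up to a constant normalization of \(\det\) (or a rescaling absorbed into the choice of \(T\)), together with \(\nabla\)-parallelism of the volume form, shows that \(\xi=-f\) is the affine normal. The relation \(\xi = -f\) then makes \(f\) a proper affine sphere centered at the origin with mean curvature \(-1\), i.e. hyperbolic, for every \(\lambda\in S^1\).
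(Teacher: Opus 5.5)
Your route is the paper's: the reality argument ($\Delta\overline{\alpha}\Delta=\alpha$ for $\lambda\in S^1$, hence $\overline F=\Delta F\Delta$, together with $\overline T=T\Delta$) and the derivative computations are exactly what the paper does, and your volume check goes beyond it. The identification step, however, fails as you wrote it. With $D_XY=\nabla_XY+h(X,Y)\xi$ and $\xi=-f$, the identity $f_{t\overline t}=e^wf=-e^w\xi$ gives $h(\partial_t,\partial_{\overline t})=-e^w$, not $e^w$. Likewise $D_X\xi=-f_*X$ gives shape operator $S=\mathrm{id}$, i.e.\ affine mean curvature $+1$, which contradicts your last sentence (``mean curvature $-1$''). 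The signs are consistent only for $\xi=+f$. Then $h(\partial_t,\partial_{\overline t})=e^w$, so $h=2e^w(dx^2+dy^2)$ is positive definite, $D_X\xi=f_*X$ gives $S=-\mathrm{id}$ and $H=-1$, and $f$ is hyperbolic. This is the standard picture: for Calabi's surface $x_1x_2x_3=\mathrm{const}$ of Example~\ref{ex1}, the convex side lies away from the origin, so the affine normal is a positive multiple of the position vector. The statement you were given, and the paper's one-line identification, contain this same sign slip. A correct proof has to flag it and establish either $(\xi,h)=(f,e^w)$ or the nonstandard negative-definite normalization $(\xi,h)=(-f,-e^w)$. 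It cannot assert $\xi=-f$ together with $h(\partial_t,\partial_{\overline t})=e^w$.

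Second, the volume constant cannot be ``absorbed into the choice of $T$'' or into a normalization of $\det$, because both are fixed. Suppose you only knew $\det(f_x,f_y,\xi)^2=c^2\det h$ for some constant $c$. Then the Blaschke normal would be $c^{-1/2}\xi$ with metric $c^{1/2}h$, so the exact metric $e^w$ would not follow. You need to compute the constant, and it does work out. We have $\det T=i$ and $\det(F_1,F_3,F_2)=-1$, so $\det(f_t,f_{\overline t},f)=-ie^w$. Using $f_x=f_t+f_{\overline t}$ and $f_y=i(f_t-f_{\overline t})$, this gives $\det(f_x,f_y,f)=-2e^w$, whose square equals $\det h=4e^{2w}$ for $h=2e^w(dx^2+dy^2)$. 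Since $D_X\xi=f_*X$ is tangential, the induced volume is $\nabla$-parallel, and $\xi=f$ is exactly the Blaschke normal. There is also a minor slip: $(F_1)_t=\tfrac{w_t}{2}F_1+\lambda^{-1}e^{-w}F_3$, so $f_{tt}=w_tf_t+\lambda^{-3}e^{-w}f_{\overline t}$, not $\lambda^{-3}e^{w/2}f_{\overline t}$. This affects only the cubic form, not $h$ or $S$.
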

\begin{proof}
	Let
	\begin{equation}
		\Delta = \left(\begin{array}{ccc}
			0 & 0 & 1 \\
			0 & 1 & 0 \\
			1 & 0 & 0
		\end{array}\right). \nonumber
	\end{equation}
	From \(\Delta\overline{\alpha(\lambda)}\Delta = \alpha(\lambda)\) for all \(\lambda \in S^1\), we have \(\overline{F} = \Delta F \Delta\) and thus,
	\begin{align}
		\overline{TF} = 
		\overline{T}
		\Delta F \Delta
		= TF\Delta. \nonumber
	\end{align}
	Hence, \(f\) takes values in \(\mathbb{R}^3\). Moreover,
	\begin{equation*}
		f_t = \lambda^{-1}e^{\frac{w}{2}}TF\begin{pmatrix}
			1 \\ 0 \\ 0
		\end{pmatrix},\ \ \ \ \ f_{\overline{t}} = \lambda e^{\frac{w}{2}}TF\begin{pmatrix}
			0 \\ 0 \\ 1
		\end{pmatrix}, 
	\end{equation*}
	and hence \(f:U \to \mathbb{R}^3\) is an immersion. Using \(dF=F\alpha\), we obtain
	\begin{align*}
		&f_{tt} = w_tf_t + \lambda^{-3}e^{-w}f_{\overline{t}},\ \ \ \ \  f_{\overline{t}\overline{t}} = \lambda^3e^{-w}f_t + w_{\overline{t}}f_{\overline{t}},\ \ \ \ \ f_{t\overline{t}} = e^wf.
	\end{align*}
	These are precisely the structure equations of a hyperbolic affine sphere with affine normal vector field \(\xi=-f\) and affine metric
	\begin{equation*}
		h(\partial_t,\partial_t)
		=
		h(\partial_{\overline{t}},\partial_{\overline{t}})
		=
		0,
		\qquad
		h(\partial_t,\partial_{\overline{t}})
		=
		e^w.
	\end{equation*}
	This completes the proof.
\end{proof}\vskip\baselineskip

The following example corresponds to the trivial solution of the Tzitz\'eica equation.
\begin{example}\label{ex1}
	Let \(w=0\) on \(\mathbb{C}\). Then
	\begin{equation}
		\alpha = \lambda^{-1}\left(\begin{array}{ccc}
			0 & 1 & 0 \\
			0 & 0 & 1 \\
			1 & 0 & 0 
		\end{array}\right)dt + \lambda \left(\begin{array}{ccc}
			0 & 0 & 1\\
			1 & 0 & 0 \\
			0 & 1 & 0
		\end{array}\right)d\overline{t}. \nonumber
	\end{equation}
	The solution of \(F^{-1}dF=\alpha\) is given by
	\begin{align}
		F &= \exp{\left(\lambda^{-1}t\left(\begin{array}{ccc}
				0 & 1 & 0 \\
				0 & 0 & 1 \\
				1 & 0 & 0 
			\end{array}\right) + \lambda \overline{t}\left(\begin{array}{ccc}
				0 & 0 & 1\\
				1 & 0 & 0 \\
				0 & 1 & 0
			\end{array}\right)\right)} \nonumber\\
		&= T^{-1}\left(\begin{array}{ccc}
			e^{\lambda^{-1}t + \lambda\overline{t}} & 0 & 0 \\
			0 & e^{\lambda^{-1}\omega t + \lambda\omega^2\overline{t}} & 0 \\
			0 & 0 & e^{\lambda^{-1}\omega^2t + \lambda\omega\overline{t}}
		\end{array}\right)T. \nonumber
	\end{align}
	The corresponding affine sphere \(f = f(t,\overline{t},\lambda):\mathbb{C} \rightarrow \mathbb{R}^3\) is given by
	\begin{equation}
		f = TF\begin{pmatrix}
			0 \\ 1 \\ 0
		\end{pmatrix} = \frac{1}{\sqrt{3}}\begin{pmatrix}
			e^{\lambda^{-1}t + \lambda\overline{t}} \\
			e^{\lambda^{-1}\omega t + \lambda\omega^2\overline{t}} \\
			e^{\lambda^{-1}\omega^2t + \lambda\omega\overline{t}}
		\end{pmatrix}. \nonumber
	\end{equation}
	Writing \(f = \begin{pmatrix}
		x_1 \\ x_2 \\ x_3
	\end{pmatrix}\), we obtain \(x_1x_2x_3 = \frac{\sqrt{3}}{9}\). This is the hyperbolic affine sphere introduced by Calabi. \qed
\end{example}

\subsection{DPW method}
In this section, we review the DPW construction for hyperbolic affine spheres. The DPW approach in affine differential geometry was studied by Dorfmeister and Eitner \cite{DE2001} and by Dorfmeister and Ma \cite{DM2016}. We begin by introducing the relevant loop groups. Let
\begin{equation}
	\sigma(g) = \Delta (g^{-1})^t \Delta,\ \ \ \tau(g) = \Delta \overline{g} \Delta,\ \ \ \Delta = \left(\begin{array}{ccc}
		0& 0 & 1 \\
		0& 1 & 0 \\
		1 & 0 & 0
	\end{array}\right), \nonumber
\end{equation}
and
\begin{equation}
	G = \{g \in {\rm SL}_3 \mathbb{C}\ | \ \tau(g) = g\},\ \ \ \ \ \ K =\{g \in G\ | \ \sigma(g) = g\}. \nonumber
\end{equation}

\begin{lem}
	We have the decomposition 
	\begin{equation}
		K^{\mathbb{C}} = \{g \in {\rm SL}_3 \mathbb{C}\ | \ \sigma(g) = g\} = KB, \nonumber
	\end{equation}
	where
	\begin{equation}
		B = \left\{\exp{\left(\begin{array}{ccc}
				a & b & 0 \\
				\overline{b} & 0 & -b \\
				0 & -\overline{b} & -a
			\end{array}\right)}\ | \ a \in \mathbb{R},\ b \in \mathbb{C}\right\}. \nonumber
	\end{equation}
	Moreover, \(K \cap B = \{I_3\}\).
\end{lem}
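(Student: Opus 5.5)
The plan is to recognize $K^{\mathbb{C}}$ as a complex orthogonal group with $K$ a compact real form, and to obtain $K^{\mathbb{C}}=KB$ as the ordinary polar decomposition $g=u\,(g^*g)^{1/2}$ of matrices.

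\emph{Step 1: the Cartan involution.} Since $\Delta^2=I_3$ and $\Delta$ is real, the maps $\sigma$ and $\tau$ commute. Their composite is
\begin{equation*}
\theta(g):=\sigma(\tau(g))=(\overline{g}^{\,t})^{-1}=(g^*)^{-1}.
\end{equation*}
Since $\sigma$ is an involution, $\tau=\sigma\circ\theta$. Hence for $g\in K^{\mathbb{C}}$ (that is, $\sigma(g)=g$) we have $\tau(g)=g$ if and only if $\theta(g)=g$, i.e.\ $g$ is unitary. So $K=K^{\mathbb{C}}\cap {\rm U}(3)$.

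On the Lie algebra level, $\mathrm{Lie}(K^{\mathbb{C}})=\{X : X^t\Delta+\Delta X=0\}$. Set
\begin{equation*}
\mathfrak{p}=\{X\in \mathrm{Lie}(K^{\mathbb{C}}) : X^*=X\}.
\end{equation*}
A direct check shows that the matrices in the definition of $B$ are exactly the elements of $\mathfrak{p}$. They are Hermitian, and $\Delta X$ is skew-symmetric, which gives $X^t\Delta+\Delta X=0$. Conversely, imposing $X^t\Delta+\Delta X=0$ and $X^*=X$ forces precisely this shape, so $\mathfrak{p}$ is 3-dimensional over $\mathbb{R}$. Thus $B=\exp(\mathfrak{p})$.

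\emph{Step 2: existence.} Let $g\in K^{\mathbb{C}}$ and put $P=g^*g=\theta(g)^{-1}g$. Because $\theta$ commutes with $\sigma$, $\theta$ preserves $K^{\mathbb{C}}$, so $P\in K^{\mathbb{C}}$; also $P$ is positive definite Hermitian. Write $P=\exp(Y)$ with $Y$ the unique Hermitian logarithm. The condition $\sigma(P)=P$ reads $\exp(-\Delta Y^t\Delta)=\exp(Y)$. Here $\Delta Y^t\Delta$ is again Hermitian, because $Y^t=\overline{Y}$ and $\Delta$ is real orthogonal. Since $\exp$ is injective on Hermitian matrices, $Y=-\Delta Y^t\Delta$, i.e.\ $Y\in\mathfrak{p}$.

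Set $b=\exp(Y/2)\in B$ and $u=gb^{-1}$. Then $u^*u=b^{-1}Pb^{-1}=I_3$, so $u$ is unitary. Moreover $u\in K^{\mathbb{C}}$, and $\det u=1$ since $\det b=e^{\operatorname{tr}Y/2}=1$ ($\operatorname{tr}Y=0$ for $Y\in\mathfrak{p}$). By Step 1, $u\in K$, and therefore $g=ub\in KB$. The inclusion $KB\subset K^{\mathbb{C}}$ is immediate.

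\emph{Step 3: $K\cap B=\{I_3\}$.} An element $\exp(X)$ with $X\in\mathfrak{p}$ is positive definite Hermitian. If it also lies in $K$, it is unitary. A positive Hermitian unitary matrix has all eigenvalues positive of modulus one, so it equals $I_3$.

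The only step needing care is the verification in Step 2 that the Hermitian square root of $g^*g$ stays inside $K^{\mathbb{C}}$. The injectivity of $\exp$ on Hermitian matrices handles this. The rest is bookkeeping to check that the explicit parametrization of $B$ coincides with $\exp(\mathfrak{p})$.
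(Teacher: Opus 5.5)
Your proof is correct and follows essentially the same route as the paper: take the Hermitian logarithm $Y$ of $g^*g$, show that it lies in the three-parameter family defining $B$, and set $k=g\exp(-Y/2)$. The differences are minor. You identify $K=K^{\mathbb{C}}\cap \mathrm{U}(3)$ via $\theta=\sigma\tau$ instead of checking $\sigma(k)=k$ and $\tau(k)=k$ by hand, and you also give the short argument for $K\cap B=\{I_3\}$, which the paper states but does not prove.
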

\begin{proof}
	Let \(g \in K^{\mathbb{C}}\) and set \(x = \overline{g}^tg\). Since \(x\) is a positive-definite Hermitian matrix, there exists a Hermitian matrix \(X \in {\rm M}_3 \mathbb{C}\) (i.e. \(\overline{X} = X^t\)) such that \(x = \exp{(X)}\). Since \(\overline{x} = g^t\overline{g} = \sigma(g)^t\overline{\sigma(g)} = \Delta x^{-1} \Delta\), we obtain \(\Delta \overline{X} \Delta = -X\). Hence
	\begin{equation}
		X = \left(\begin{array}{ccc}
			a & b & 0 \\
			\overline{b} & 0 & -b \\
			0 & -\overline{b} & -a
		\end{array}\right),\ \ \ \ \ a \in \mathbb{R},\ b \in \mathbb{C}. \nonumber
	\end{equation}
	
	Set \(p = \exp{(X/2)}\) and \(k = gp^{-1}\). Then \(x = pp, \Delta\overline{p}\Delta = p^{-1}\) and
	\begin{align}
		\sigma(k) &= \Delta (g^{-1})^tp^t \Delta = \sigma(g)\Delta \exp{\left(\frac{X^t}{2}\right)}\Delta = g\Delta\exp{\left(\frac{\overline{X}}{2}\right)}\Delta = g\exp{\left(\frac{\Delta\overline{X}\Delta}{2}\right)} \nonumber\\
		&= g\exp{\left(-\frac{X}{2}\right)} = gp^{-1} = k, \nonumber\\
		\tau(k) &= \Delta\overline{k}\Delta = \Delta\overline{g}\overline{p^{-1}}\Delta = \Delta \left((g^{-1})^t\overline{x}\right)\overline{p^{-1}}\Delta 
		= \Delta (g^{-1})^t \Delta \Delta\overline{xp^{-1}}\Delta \nonumber\\ 
		&= \sigma(g)\Delta \overline{p}\Delta = gp^{-1} = k. \nonumber
	\end{align}
	Therefore, \(k \in K\), and hence \(g=kp\in KB\).
\end{proof}\vskip\baselineskip

Let \(I = \{\lambda \in \mathbb{C}\ | \ |\lambda| < 1\}\) denote the open unit disk. We define the following twisted loop groups.
\begin{align}
	&(\Lambda {\rm SL}_3 \mathbb{C})_{\sigma} = \{\gamma:S^1 \rightarrow {\rm SL}_3 \mathbb{C}\ | \ \text{\(\gamma\) is smooth in \(\lambda \in S^1\)},\ \sigma(\gamma(-\lambda)) = \gamma(\lambda)\}, \nonumber\\
	&(\Lambda^+_B {\rm SL}_3 \mathbb{C})_{\sigma} = \{\gamma \in (\Lambda {\rm SL}_3 \mathbb{C})_{\sigma}\ | \ \text{\(\gamma, \gamma^{-1}\) extend holomorphically to \(I\)},\ \gamma(0) \in B\}, \nonumber\\
	&(\Lambda G)_{\sigma} = \{\gamma \in (\Lambda {\rm SL}_3 \mathbb{C})_{\sigma}\ | \ \gamma(\lambda) \in G\}. \nonumber
\end{align}

The corresponding twisted loop algebra is defined by
\begin{equation}
	(\Lambda \mathfrak{sl}_3 \mathbb{C})_{\sigma} = \{A:S^1 \rightarrow \mathfrak{sl}_3 \mathbb{C}\ | \ \text{\(A\) is smooth in \(\lambda\)},\ \sigma(A(-\lambda)) = A(\lambda)\}. \nonumber
\end{equation}
Here and throughout, we use the same symbol \(\sigma\) to denote the induced involution on \(\mathfrak{sl}_3\mathbb{C}\).\vskip\baselineskip

We shall make use of the following Iwasawa decomposition.
\begin{theorem}[Balan, Dorfmeister \cite{BD2001}]
	There exists an open dense subset \(\mathcal{U} \subset (\Lambda {\rm SL}_3 \mathbb{C})_{\sigma}\) such that the multiplication
	\begin{equation}
		(\Lambda G)_{\sigma} \times (\Lambda^+_B {\rm SL}_3 \mathbb{C})_{\sigma} \rightarrow \mathcal{U} \nonumber
	\end{equation}
	is a real-analytic bijective diffeomorphism with respect to the natural smooth manifold.
\end{theorem}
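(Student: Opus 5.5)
The plan is to reduce the real Iwasawa problem to the complex Birkhoff factorization, using the standard "twisted conjugation" trick of Balan--Dorfmeister and Kellersch. First, I extend $\tau$ to a loop-group involution $\hat\tau(\gamma)(\lambda)=\tau(\gamma(1/\overline{\lambda}))$. On $S^1$ we have $1/\overline\lambda=\lambda$, so $(\Lambda G)_\sigma$ is exactly the fixed-point set of $\hat\tau$ in $(\Lambda{\rm SL}_3\mathbb{C})_\sigma$. Moreover, $\hat\tau$ interchanges the loops extending holomorphically to $I$ with those extending holomorphically to $\{|\lambda|>1\}\cup\{\infty\}$, so $\hat\tau((\Lambda^+{\rm SL}_3\mathbb{C})_\sigma)=(\Lambda^-{\rm SL}_3\mathbb{C})_\sigma$. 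The compatibility of $\hat\tau$ with the twist $\sigma(\gamma(-\lambda))=\gamma(\lambda)$ is a direct check, since $\sigma$ and $\tau$ commute.

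\textbf{Uniqueness and injectivity.} Suppose $F_1p_1=F_2p_2$. Then $h:=F_2^{-1}F_1=p_2p_1^{-1}$ lies in $(\Lambda G)_\sigma\cap(\Lambda^+{\rm SL}_3\mathbb{C})_\sigma$. Because $h=\hat\tau(h)$, $h$ extends holomorphically to both the inside and the outside of the circle. By Liouville, $h$ is therefore constant. The twist then forces $h\in K^{\mathbb{C}}\cap G=K$. Since also $h=p_2(0)p_1(0)^{-1}$ with $p_i(0)\in B$, the decomposition $K^{\mathbb{C}}=KB$ together with $K\cap B=\{I_3\}$ from the preceding Lemma gives $h=I_3$.

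\textbf{Existence and the domain $\mathcal U$.} Given $g$, set $Q(g)=\hat\tau(g)^{-1}g$. If $g=Fp$, then $Q(g)=\hat\tau(p)^{-1}p$, which lies in the Birkhoff big cell $\Lambda^-\cdot\Lambda^+$.

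1. Conversely, suppose $Q(g)$ lies in the big cell. I normalize the Birkhoff factorization as $Q(g)=h_-\,d\,h_+$, with $h_\pm(\infty)=I$ and $h_+(0)=I$ respectively, and with middle factor $d\in K^{\mathbb{C}}$.
2. The identity $\hat\tau(Q(g))=Q(g)^{-1}$, together with uniqueness of the normalized Birkhoff factorization, gives $h_-=\hat\tau(h_+)^{-1}$ and $\tau(d)=d^{-1}$.
3. If $d$ can be written as $\tau(b)^{-1}b$ with $b\in B$, then $p:=bh_+\in(\Lambda^+_B{\rm SL}_3\mathbb{C})_\sigma$ satisfies $Q(g)=Q(p)$. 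Hence $F:=gp^{-1}$ is $\hat\tau$-fixed, that is, $F\in(\Lambda G)_\sigma$.

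Accordingly, I define $\mathcal U$ as the set of $g$ such that $Q(g)$ lies in the big cell and $d$ lies in the image $\{\tau(b)^{-1}b\}$. The Birkhoff big cell is open and dense: its complement is the zero set of a holomorphic (Fredholm) determinant, hence nowhere dense. The map $b\mapsto\tau(b)^{-1}b$ is a diffeomorphism onto an open subset of $\{d\in K^{\mathbb{C}}:\tau(d)=d^{-1}\}$, so the second condition is also open. Real-analyticity of the factors follows from real-analyticity of Birkhoff factorization on the big cell, composed with the real-analytic inverse of $b\mapsto\tau(b)^{-1}b$. Bijectivity onto $\mathcal U$ is then the uniqueness statement above.

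\textbf{Main obstacle: density.} In this noncompact situation the middle factor $d$ need not be of the form $\tau(b)^{-1}b$. This is analogous to the signature obstruction for ${\rm SU}(p,q)$ loop groups, where the big cell splits into several open $\Lambda G$-orbits. To prove density, I would show that within the big cell the locus where $d$ fails to lie in the image of $b\mapsto\tau(b)^{-1}b$ is contained in a proper real-analytic subset. I would try to detect it by the vanishing of a real-analytic function, such as a principal minor of $d$ after conjugating by $\Delta$. If instead that locus turns out to have interior, I would enlarge the statement to the union of the open orbits $\Lambda G\cdot w\cdot\Lambda^+_B$, with $w$ ranging over finitely many constant representatives. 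Such finite-sign representatives are the same phenomenon as the sign matrix $\mathcal E(x)$ appearing in Theorem 3.6.
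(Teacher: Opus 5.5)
Your reduction via $Q(g)=\hat\tau(g)^{-1}g$ is sound as far as it goes. It shows that multiplication is a real-analytic diffeomorphism onto the open set of loops $g$ for which $Q(g)$ lies in the Birkhoff big cell and the middle factor has the form $d=\tau(b)^{-1}b$ with $b\in B$. This set contains $I_3$.

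Two small repairs are needed:
\begin{itemize}
\item In the uniqueness step, $K\cap B=\{I_3\}$ alone does not give $p_1(0)=p_2(0)$, since $B$ is not a subgroup. Use uniqueness of the polar decomposition instead: $p_2(0)^2=\overline{p_2(0)}^{\,t}p_2(0)=\overline{p_1(0)}^{\,t}p_1(0)=p_1(0)^2$.
\item Note that $\tau(b)=b^{-1}$ on $B$, so the admissible middle factors are exactly the $b^2$, i.e.\ the positive definite ones. On the other hand, every middle factor is Hermitian, because $\sigma(d)=d$ and $\tau(d)=d^{-1}$ together give $\overline{d}=d^{t}$.
\end{itemize}

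The genuine gap is density, and it cannot be closed, because the statement as written is false. Since $d$ is Hermitian, has $\det d=1$, and depends continuously on $g$, its signature is locally constant on $\{g : Q(g)\in\Lambda^-\cdot\Lambda^+\}$. The bad locus is therefore open there, so your plan of trapping it in the zero set of a real-analytic function such as a minor must fail.

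The bad locus is also nonempty. The loop $w(\lambda)={\rm diag}(\lambda,-1,-\lambda^{-1})$ is twisted, and $\hat\tau(w)^{-1}w={\rm diag}(-1,1,-1)$, an indefinite middle factor. Hence a whole neighbourhood of $w$ misses $(\Lambda G)_\sigma\cdot(\Lambda^+_B{\rm SL}_3\mathbb{C})_\sigma$. The image of the multiplication map is therefore open but not dense, and no $\mathcal{U}$ as in the statement exists.

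The paper gives no proof here, only the citation to Balan--Dorfmeister. What actually holds, and all the paper needs near $t=0$, is the diffeomorphism onto the open cell through $I_3$. Density holds only after adjoining further open double cosets $(\Lambda G)_\sigma\,w\,(\Lambda^+{\rm SL}_3\mathbb{C})_\sigma$. Your fallback proves that corrected statement, not the one posed.

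Finally, $\mathcal{E}(x)$ cannot serve as such a representative. It is a constant element of $K\subset G$, so $(\Lambda G)_\sigma\,\mathcal{E}\,(\Lambda^+{\rm SL}_3\mathbb{C})_\sigma$ is the same cell. The matrix ${\rm diag}(-1,1,-1)$ enters instead as a middle factor. Indeed, by Lemma~\ref{lem3.2}, wherever $\wp(x+\beta)<M$ the middle factor of $\hat\tau(\phi(x))^{-1}\phi(x)$ is the indefinite matrix $g(x)$, so $\phi(x)$ itself lies outside that cell.
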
\vskip\baselineskip

We now briefly recall the DPW construction \cite{DPW1998}.\vskip\baselineskip

Let \(\Sigma\) be a simply-connected Riemann surface with local coordinate \(t\), and let \(\Omega^{1,0}_{\Sigma}\) denote the space of holomorphic \((1,0)\)-forms on \(\Sigma\). A DPW potential is a \((\Lambda\mathfrak{sl}_3\mathbb C)_\sigma\)-valued holomorphic \(1\)-form of the form
\begin{equation}
	\xi = \frac{1}{\lambda}C_{-1}dt + \sum_{j=0}^{\infty}C_j(t)\lambda^j \in \Omega_{\mathbb{C}}^{1,0} \otimes (\Lambda {\mathfrak{sl}_3 \mathbb{C}})_{\sigma} \nonumber
\end{equation}
satisfying
\begin{equation}
	S^{-1}\xi(t,\omega \lambda)S = \xi(t,\lambda),\ \ \ \ \ \omega = e^{i\frac{2}{3}\pi},\ S = \left(\begin{array}{ccc}
		1 & 0 & 0 \\
		0& \omega & 0 \\
		0& 0 & \omega^2 
	\end{array}\right). \nonumber
\end{equation}

The symmetry conditions imposed on \(\xi\) determine the form of the leading coefficient \(C_{-1}\).
\begin{lem}
	The matrix \(C_{-1}\) is of the form \(C_{-1} = \left(\begin{array}{ccc}
		0& c_2 & 0 \\
		0&  0 & c_2 \\
		c_1 &  0& 0 
	\end{array}\right)\) for some \(c_1, c_2 \in \mathbb{C}\).
\end{lem}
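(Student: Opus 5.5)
The plan is to impose the two symmetry conditions on the coefficient of \(\lambda^{-1}\) one after the other: first the \(\omega\)-twisting condition with \(S\), then the involution \(\sigma\) coming from the twisted loop algebra. Comparing the \(\lambda^{-1}\)-terms in \(S^{-1}\xi(t,\omega\lambda)S=\xi(t,\lambda)\) gives \(\omega^{-1}S^{-1}C_{-1}S = C_{-1}\). Since \((S^{-1}C_{-1}S)_{jk} = \omega^{k-j}(C_{-1})_{jk}\), the entry \((j,k)\) survives only if \(\omega^{k-j-1}=1\), that is, \(k-j\equiv 1 \pmod 3\). This leaves exactly the positions \((1,2),(2,3),(3,1)\), so
\[
C_{-1}=\begin{pmatrix} 0 & p & 0\\ 0&0&q\\ r&0&0\end{pmatrix}.
\]

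Next I use that \(\xi\) is \((\Lambda\mathfrak{sl}_3\mathbb{C})_\sigma\)-valued. On the Lie algebra, \(\sigma\) acts by \(\sigma(A) = -\Delta A^t\Delta\). The condition \(\sigma(A(-\lambda))=A(\lambda)\), applied to the \(\lambda^{-1}\)-coefficient, gives \(-\sigma(C_{-1}) = C_{-1}\), i.e. \(\Delta C_{-1}^t\Delta = C_{-1}\). Conjugation by \(\Delta\) sends the \((j,k)\) entry to position \((4-j,4-k)\). Hence \((\Delta C_{-1}^t\Delta)_{jk} = (C_{-1})_{4-k,4-j}\), which is reflection across the anti-diagonal.

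Under this reflection, position \((1,2)\) goes to \((2,3)\), and \((3,1)\) is fixed. The condition therefore forces \(p=q\) and leaves \(r\) free. Writing \(c_2=p=q\) and \(c_1=r\) gives the claimed form.

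There is no real obstacle; the only delicate point is bookkeeping. One has to check the sign convention of \(\sigma\) on the Lie algebra (the derivative of \(g\mapsto \Delta (g^{-1})^t\Delta\) is \(A\mapsto -\Delta A^t\Delta\)). One also has to confirm that \((-\lambda)^{-1}=-\lambda^{-1}\) contributes the compensating sign, so the two signs cancel and give \(\Delta C_{-1}^t\Delta=C_{-1}\) rather than its negative.
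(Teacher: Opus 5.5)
Your proposal is correct and is the paper's own argument. The \(\lambda^{-1}\)-coefficient of the \(S\)-twisting condition, \(\omega^{2}S^{-1}C_{-1}S=C_{-1}\), leaves only the \((1,2),(2,3),(3,1)\) entries, and the \(\sigma\)-condition \(\Delta C_{-1}^t\Delta=C_{-1}\) then forces the \((1,2)\) and \((2,3)\) entries to agree. Your explicit check of the sign of \(\sigma\) on the Lie algebra, together with the sign from \((-\lambda)^{-1}\), fills in a step the paper leaves implicit.
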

\begin{proof}
	Since \(\omega^2S^{-1}C_{-1}S = C_{-1}\), the matrix \(C_{-1}\) must be of the form
	\begin{equation}
		C_{-1} = \left(\begin{array}{ccc}
			0	& c_2 & 0 \\
			0	&  0& c_3 \\
			c_1 & 0 & 0
		\end{array}\right), \nonumber
	\end{equation}
	for some \(c_1,c_2,c_3 \in \mathbb{C}\). Furthermore, the condition \(\sigma(\xi(-\lambda)) = \xi(\lambda)\) implies \(\Delta C_{-1}^t \Delta = C_{-1}\) and hence \(c_2 = c_3\).
\end{proof}\vskip\baselineskip

Solve \(d\phi = \phi\xi\) with the initial condition \(\phi(0,\lambda) = I_3\). Then, on a sufficiently small neighborhood \(U \subset \mathbb{C}\) of \(t=0\), the solution \(\phi\) admits an Iwasawa factorization \(\phi = F\phi_+\), where
\(F \in (\Lambda G)_{\sigma}\), \(\phi_+ \in (\Lambda^+ {\rm SL}_3 \mathbb{C})_{\sigma}\). The following proposition describes the Maurer--Cartan form of the extended frame \(F\).

\begin{prop}
	There exists a real-valued function \(w:U\to\mathbb R\) such that \(\phi_+|_{\lambda = 0} = {\rm diag}(e^{\frac{w}{2}},1,e^{-\frac{w}{2}})\) and
	\footnotesize
	\begin{equation}
		F^{-1}dF = \left(\begin{array}{ccc}
			\frac{w_t}{2} & \lambda^{-1}c_2e^{\frac{w}{2}} & 0 \\
			0 & 0 & \lambda^{-1}c_2e^{\frac{w}{2}} \\
			\lambda^{-1}c_1e^{-w}  & 0 & -\frac{w_t}{2}
		\end{array}\right)dt + \left(\begin{array}{ccc}
			-\frac{w_{\overline{t}}}{2} & 0 & \lambda \overline{c}_1e^{-w} \\
			\lambda \overline{c}_2e^{\frac{w}{2}} & 0 & 0 \\
			0 & \lambda \overline{c}_2e^{-w}  & \frac{w_{\overline{t}}}{2}
		\end{array}\right)d\overline{t}. \nonumber
	\end{equation}
	\normalsize
\end{prop}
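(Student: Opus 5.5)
The approach is the standard DPW bookkeeping. Write \(\alpha = F^{-1}dF\). From \(\phi = F\phi_+\) we get
\begin{equation*}
\alpha = \phi_+\xi\phi_+^{-1} - d\phi_+\,\phi_+^{-1}.
\end{equation*}
The plan has three steps: (i) use the right-hand side to bound the \(\lambda\)-powers in the \(dt\)-part of \(\alpha\) from below; (ii) use the reality condition \(F\in(\Lambda G)_\sigma\) to bound the \(\lambda\)-powers from above; (iii) pin down the surviving coefficients using the symmetries.

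\textbf{Step (i).} Since \(\xi\) is a \((1,0)\)-form whose only negative power is \(\lambda^{-1}C_{-1}\), and \(\phi_+\) is holomorphic on \(I\), the \(dt\)-part \(\alpha'\) of \(\alpha\) contains only powers \(\lambda^{j}\) with \(j\ge -1\). Its \(\lambda^{-1}\)-coefficient is \(\phi_0C_{-1}\phi_0^{-1}\), where \(\phi_0=\phi_+|_{\lambda=0}\in B\). The \(d\bar t\)-part \(\alpha''\) equals \(-\bar\partial\phi_+\,\phi_+^{-1}\), so it contains only powers \(\lambda^j\) with \(j\ge 0\). Its \(\lambda^0\)-coefficient is \(-\bar\partial\phi_0\,\phi_0^{-1}\).

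\textbf{Step (ii).} Because \(F(\lambda)\in G\) for \(\lambda\in S^1\), we have \(\tau(F(\lambda)) = F(\lambda)\), i.e. \(\Delta\overline{F(1/\bar\lambda)}\Delta=F(\lambda)\) after analytic continuation. Hence \(\alpha'' = \Delta\,\overline{\alpha'(1/\bar\lambda)}\,\Delta\) (with \(dt\) and \(d\bar t\) exchanged by conjugation). This caps \(\alpha''\) at \(\lambda^{1}\) and \(\alpha'\) at \(\lambda^0\). Therefore
\begin{equation*}
\alpha=\lambda^{-1}\alpha_{-1}+\alpha_0+\lambda\alpha_1 ,
\end{equation*}
where \(\alpha_{-1}=\phi_0C_{-1}\phi_0^{-1}\,dt\) and \(\alpha_1\,d\bar t\) is its \(\tau\)-image.

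\textbf{Step (iii), where the real work lies.} I must show that \(\phi_0\) has the form \({\rm diag}(e^{w/2},1,e^{-w/2})\) with \(w\) real. First, the \(\omega\)-twist \(S^{-1}\xi(t,\omega\lambda)S=\xi(t,\lambda)\) passes to \(\phi\), since \(\phi(0)=I_3\) and the ODE is preserved. It then passes to \(F\) and \(\phi_+\) by uniqueness of the Iwasawa factorization (Balan--Dorfmeister), because conjugation by \(S\) combined with \(\lambda\mapsto\omega\lambda\) preserves both \((\Lambda G)_\sigma\) and \((\Lambda^+_B{\rm SL}_3\mathbb C)_\sigma\); verifying the latter (that \(S\) is compatible with \(\tau\) and \(\sigma\) and maps \(B\) to itself) is a short check. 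Evaluating at \(\lambda=0\) gives \(S^{-1}\phi_0S=\phi_0\), so \(\phi_0\) is diagonal. A diagonal element of \(B\) has \(b=0\), so \(\phi_0=\exp{\rm diag}(a,0,-a)\) with \(a\) real; I set \(w=2a\).

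With \(\phi_0\) in hand, I conjugate \(C_{-1}\) from the Lemma above by \(\phi_0\). This gives the entries \(c_2e^{w/2}\) in positions \((1,2)\) and \((2,3)\), and \(c_1e^{-w}\) in position \((3,1)\).

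Next comes the \(\lambda^0\) term. Its \(d\bar t\)-part is \(-\bar\partial\phi_0\phi_0^{-1}=-\tfrac{w_{\bar t}}2{\rm diag}(1,0,-1)\). The \(\tau\)-reality then forces its \(dt\)-part to be \(\Delta\,\overline{(\cdot)}\,\Delta=\tfrac{w_t}{2}{\rm diag}(1,0,-1)\). I also check consistency: the \(S\)-twist and \(\sigma\) force the \(\lambda^0\) coefficient to be diagonal of the form \({\rm diag}(x,0,-x)\).

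Finally, \(\alpha_1\) is the \(\tau\)-image of \(\alpha_{-1}\). It produces \(\bar c_1e^{-w}\) in position \((1,3)\) and \(\bar c_2e^{w/2}\) in positions \((2,1)\) and \((3,2)\). The \((3,2)\) exponent should be checked against the displayed statement, which I suspect contains a typo there.

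The main obstacle is the transfer of the \(S\)-twist symmetry to \(\phi_+\) via uniqueness of the factorization. Everything else is coefficient bookkeeping.
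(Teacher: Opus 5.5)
Your proposal is correct and follows the paper's argument: you transfer the $S$-twist from $\xi$ to $\phi$ and then to $F$ and $\phi_+$ by uniqueness of the Iwasawa factorization, deduce that $\phi_+|_{\lambda=0}$ is diagonal, and read off $F^{-1}dF$ from $\phi_+\xi\phi_+^{-1}-d\phi_+\phi_+^{-1}$ together with $\tau$-reality, writing out the bookkeeping the paper leaves implicit. Your suspicion about the $(3,2)$ entry is also right: $\tau$-reality (and the $\sigma$-twist) force the $(3,2)$ entry of the $d\bar t$-part to be $\lambda\bar c_2e^{w/2}$, which agrees with the form $\alpha$ in Section 2.1, so the $e^{-w}$ there is a typo.
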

\begin{proof}
	Since \(S^{-1}\xi(t,\omega \lambda)S = \xi(t,\lambda)\) and \(\phi(0,\lambda) = I_3\), the uniqueness of solutions to \(d\phi = \phi\xi,\ \phi(0) = I_3\) implies that \(S^{-1}\phi(t,\omega \lambda)S = \phi(t,\lambda)\). By the uniqueness of the Iwasawa factorization, it follows that \(S^{-1} F(t,\overline{t},\omega\lambda)S = F(t,\overline{t},\lambda)\) and \(S^{-1}\phi_+(t,\overline{t},\omega\lambda)S = \phi_+(t,\overline{t},\lambda)\). Hence \(\phi_+|_{\lambda = 0}\) is diagonal. Writing \(\phi_+|_{\lambda = 0} = {\rm diag}(e^{\frac{w}{2}},1,e^{-\frac{w}{2}})\) and using \(F^{-1}dF = \phi_+\xi\phi_+^{-1} - d\phi_+ \phi_+^{-1}\) together with the reality condition \(\tau(F^{-1}dF) = F^{-1}dF\), we obtain the stated formula.
\end{proof}\vskip\baselineskip

Comparing the above expression with the frame equations in Proposition \ref{prop2.1}, we obtain the following corollary.
\begin{cor}
	Let \(f = TF\begin{pmatrix}
		0 \\ 1 \\ 0
	\end{pmatrix}\). Then \(f = f(t,\overline{t},\lambda):U \rightarrow \mathbb{R}^3\) defines a family of hyperbolic affine spheres parameterized by \(\lambda \in S^1\).
\end{cor}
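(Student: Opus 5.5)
The plan is to redo the computation in the proof of Proposition~\ref{prop2.1}, now for the Maurer--Cartan form of $F$ just obtained. The constants $c_1, c_2$ will be absorbed either by a direct computation or by a constant rescaling. Throughout I assume $c_1 c_2 \neq 0$; otherwise $\xi$ is degenerate and the map below fails to be an immersion.

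\textbf{Reality.} Since $F$ takes values in $(\Lambda G)_\sigma$, we have $\tau(F)=F$, that is, $\overline{F}=\Delta F\Delta$ for $\lambda\in S^1$. A direct check gives $\overline{T}=T\Delta$. Hence
\[
\overline{TF}=\overline{T}\,\Delta F\Delta = TF\Delta .
\]
Since $\Delta$ fixes the second basis vector, $\overline{f}=f$, so $f$ is $\mathbb{R}^3$-valued. This is exactly the argument of Proposition~\ref{prop2.1}.

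\textbf{Frame equations.} Writing $e_1,e_2,e_3$ for the standard basis, I read off from the preceding proposition
\[
f_t=\lambda^{-1}c_2e^{w/2}\,TFe_1,\qquad f_{\bar t}=\lambda\,\overline{c}_2e^{w/2}\,TFe_3 .
\]
Together with $f=TFe_2$, these three vectors are the columns of $TF$, which is invertible. So $f$ is an immersion transversal to its position vector.

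Differentiating once more with $dF=F\,F^{-1}dF$ should give:
\begin{itemize}
\item $f_{t\bar t}=|c_2|^2e^{w}f$;
\item $f_{tt}=w_tf_t+(\text{const})\,\lambda^{-3}e^{-w}f_{\bar t}$, where the constant involves $c_1$ and $c_2$;
\item the conjugate equation for $f_{\bar t\bar t}$.
\end{itemize}
These are the structure equations of a hyperbolic affine sphere with affine normal proportional to $-f$ and affine metric $h(\partial_t,\partial_{\bar t})=|c_2|^2e^w$, after checking the apolarity condition.

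\textbf{Reduction to Proposition~\ref{prop2.1}.} Alternatively, I would rescale $t\mapsto st$ for a constant $s$, shift $w\mapsto w+\text{const}$, and possibly conjugate $F$ by a constant diagonal matrix (up to a unimodular constant factor). The aim is to make $c_1=c_2=1$, so that $F^{-1}dF$ is literally the form $\alpha$ of Proposition~\ref{prop2.1} with the rescaled function. Flatness of $F^{-1}dF$, which holds automatically because it is a Maurer--Cartan form, then makes that function a solution of the Tzitz\'eica equation. Proposition~\ref{prop2.1} then gives a hyperbolic affine sphere for every $\lambda\in S^1$. The original $f$ differs from it by a reparametrization and a constant linear map, possibly with a homothety, and such a map preserves the class of hyperbolic affine spheres.

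\textbf{Main obstacle.} I expect the hard step to be this bookkeeping of constants. The diagonal conjugation must respect both the twisting by $S$ and the reality condition $\tau$, and it must cancel $c_1$ and $c_2$ simultaneously; with three parameters for two constraints this should be possible. I would first try $D=\mathrm{diag}(d,1,d^{-1})$ combined with $t\mapsto st$, and solve for $d$ and $s$ from the $(1,2)$ and $(3,1)$ entries of the $dt$-part. The $d\bar t$-part then follows from $\tau$-reality.
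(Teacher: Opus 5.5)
Your proposal is correct and matches the paper, whose entire argument is to compare \(F^{-1}dF\) with the frame equations of Proposition~\ref{prop2.1}. Your direct computation (\(f_{t\bar t}=|c_2|^2e^{w}f\), \(f_{tt}=w_tf_t+\lambda^{-3}\frac{c_1c_2}{\overline{c}_2}e^{-w}f_{\bar t}\), with the volume normalization automatic because \(\det(TF)\) is constant) just makes that comparison explicit.

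One caution on your normalization route. With only \(t\mapsto st\) and \(D=\mathrm{diag}(d,1,d^{-1})\), you are forced to \(|d|\neq 1\) whenever \(|c_1|\neq|c_2|\) (e.g.\ \(c_1=ia^2\), \(c_2=ia^{-1}\) in Section~3). Then \(D\notin G\), and the \(d\bar t\)-part does not follow from \(\tau\)-reality. You also need to replace \(w\) by \(w-\kappa\) with \(e^{\kappa}=(|c_1|/|c_2|)^{2/3}\), which restores \(|d|=1\).

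Also, only \(c_2=0\) destroys the immersion property, not \(c_1=0\).
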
\vskip\baselineskip

The following example illustrates the DPW construction and recovers the affine sphere in Example \ref{ex1}.
\begin{example}
	Consider the DPW potential
	\begin{equation}
		\xi = \lambda^{-1}\left(\begin{array}{ccc}
			0& 1 & 0 \\
			0& 0 & 1 \\
			1 &  0& 0
		\end{array}\right)dt. \nonumber
	\end{equation}
	The solution of \(d\phi = \phi \xi,\ \phi(0) = I_3\) is given by
	\begin{equation}
		\phi = \exp{\left(\lambda^{-1}t\left(\begin{array}{ccc}
				0& 1 & 0 \\
				0& 0 & 1 \\
				1 & 0 & 0
			\end{array}\right)\right)}, \nonumber
	\end{equation}
	and \(\phi\) admits an Iwasawa factorization \(\phi = F\phi_+\) on \(\mathbb{C}\), where
	\begin{align}
		&F = \exp{\left(\lambda^{-1}t\left(\begin{array}{ccc}
				0& 1  & 0 \\
				0& 0 & 1 \\
				1 & 0 & 0
			\end{array}\right) + \lambda \overline{t}\left(\begin{array}{ccc}
				0& 0  & 1 \\
				1 &  0& 0 \\
				0& 1 & 0
			\end{array}\right)\right)}, \nonumber\\
		&\phi_+ = \exp{\left(-\lambda \overline{t}\left(\begin{array}{ccc}
				0& 0  & 1 \\
				1 &  0& 0 \\
				0& 1 & 0
			\end{array}\right)\right)}. \nonumber
	\end{align}
	Since
	\begin{align}
		TF &= T\exp{\left(\lambda^{-1}t\left(\begin{array}{ccc}
				0& 1  & 0 \\
				0& 0 & 1 \\
				1 & 0 & 0
			\end{array}\right) + \lambda \overline{t}\left(\begin{array}{ccc}
				0& 0  & 1 \\
				1 &  0& 0 \\
				0& 1 & 0
			\end{array}\right)\right)}T^{-1}T \nonumber\\
		&= \left(\begin{array}{ccc}
			e^{\lambda^{-1}t + \lambda \overline{t}} & 0 & 0 \\
			0& e^{\lambda^{-1}t\omega + \lambda \overline{t}\omega^2} & 0 \\
			0& 0 & e^{\lambda^{-1}t\omega^2 + \lambda \overline{t}\omega}
		\end{array}\right)T, \nonumber
	\end{align}
	\(f = f(t,\overline{t},\lambda):\mathbb{C} \rightarrow \mathbb{R}^3\) is given by
	\begin{equation}
		f = TF\begin{pmatrix}
			0 \\ 1 \\ 0
		\end{pmatrix} = \frac{1}{\sqrt{3}}\begin{pmatrix}
			e^{\lambda^{-1}t + \lambda \overline{t}} \\
			e^{\lambda^{-1}t\omega + \lambda \overline{t}\omega^2} \\
			e^{\lambda^{-1}t\omega^2 + \lambda \overline{t}\omega}
		\end{pmatrix}. \nonumber
	\end{equation}
	
	Writing \(f = \begin{pmatrix}
		x_1 \\ x_2 \\ x_3
	\end{pmatrix}\), we obtain \(x_1x_2x_3 = \frac{\sqrt{3}}{9}\). Hence, we recover the hyperbolic affine sphere of Example \ref{ex1}. \qed
\end{example}

\section{Iwasawa factorization of \(\phi\)}
In this paper, we investigate translationally equivariant affine spheres arising from translationally invariant solutions of the Tzitz\'eica equation. To construct such surfaces, we introduce a Delaunay-type DPW potential.
\small
\begin{align}
	\xi &= \lambda^{-1}i\left(\begin{array}{ccc}
		0 & a^{-1} & 0 \\
		0 & 0  & a^{-1} \\
		a^2 & 0  & 0 
	\end{array}\right)dt + m\left(\begin{array}{ccc}
		1 & 0   & 0  \\
		0 & 0 & 0   \\
		0 & 0  & -1
	\end{array}\right)dt + \lambda i\left(\begin{array}{ccc}
		0 &  0  & a^2 \\
		a^{-1} & 0  & 0   \\
		0 	& a^{-1} & 0 
	\end{array}\right)dt \nonumber\\
	&= \left(\begin{array}{ccc}
		m & \lambda^{-1}ia^{-1} & \lambda ia^2 \\
		\lambda ia^{-1} & 0 & \lambda^{-1}ia^{-1} \\
		\lambda^{-1}ia^2 & \lambda ia^{-1} & -m
	\end{array}\right)dt =: Xdt, \nonumber
\end{align}
\normalsize
where \(a > 0,\ m \in \mathbb{R}\). In this section, we characterize the corresponding Iwasawa factorization by a certain linear differential equation and prove its existence on \(\mathbb{C}\) except away from countably many lines.

\subsection{The Delaunay potential and the Weierstrass \(\wp\)-function}
First, we show that the Delaunay-type potential gives rise to a translationally invariant solution of the Tzitz\'eica equation and that the solution can be expressed in terms of the Weierstrass elliptic function \(\wp\).\vskip\baselineskip

The solution of \(d\phi = \phi \xi,\ \phi(0) = I_3\) is given by
\begin{align}
	\phi(t,\lambda) &= \exp{(tX)} = \exp{\left(t\left(\begin{array}{ccc}
			m & \lambda^{-1}ia^{-1} & \lambda ia^2 \\
			\lambda ia^{-1} & 0 & \lambda^{-1}ia^{-1} \\
			\lambda^{-1}ia^2 & \lambda ia^{-1} & -m
		\end{array}\right)\right)}. \nonumber
\end{align}
We split \(\phi = F\phi_+\) by the Iwasawa factorization, where \(F \in (\Lambda G)_{\sigma}, \phi_+ \in (\Lambda^+_B {\rm SL}_3 \mathbb{C})_{\sigma}\).

\begin{lem}
	We have \(\phi_+ = \phi_+(x,\lambda)\), where \(t = x + iy\). Moreover, \(f = TF\begin{pmatrix}
		0 \\ 1 \\ 0
	\end{pmatrix}\) defines a translationally equivariant hyperbolic affine sphere satisfying
	\begin{equation}
		f(t + iy') = T\phi(iy')T^{-1}f(t),\ \ \ \ \ T\phi(iy)T^{-1} \in {\rm SL}_3 \mathbb{R} \nonumber
	\end{equation}
	for all \(\lambda \in S^1\).
\end{lem}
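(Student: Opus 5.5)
The argument uses three facts: \(\phi(t)=\exp(tX)\) is a one-parameter group, \(\phi(iy)\) lies in \((\Lambda G)_\sigma\), and the Iwasawa factorization is unique. Since \(X\) is independent of \(t\), we have
\begin{equation*}
\phi(t+iy',\lambda)=\exp(iy'X)\exp(tX)=\phi(iy',\lambda)\phi(t,\lambda).
\end{equation*}

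\textbf{Step 1: \(\phi(iy)\in(\Lambda G)_\sigma\) for real \(y\).} The plan is to prove \(\tau(iX(\lambda))=iX(\lambda)\) for \(\lambda\in S^1\), i.e. \(\Delta\overline{iX}\Delta=iX\). Conjugating by \(\Delta\) reverses both the row and the column order, so \((\Delta A\Delta)_{jk}=A_{4-j,4-k}\). Write \(\bar\lambda=\lambda^{-1}\) on \(S^1\). The entries of \(iX\) are \(im\) at \((1,1)\), \(-\lambda^{-1}a^{-1}\) at \((1,2)\), \(-\lambda a^2\) at \((1,3)\), and so on. Then \(\overline{iX}\) has \(-im\) at \((1,1)\), which \(\Delta\)-conjugation moves to \((3,3)\), where \(iX\) has \(-im\). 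Likewise \(\overline{(iX)_{12}}=-\lambda a^{-1}\) moves to \((3,2)\), where \(iX\) has \(i\cdot\lambda i a^{-1}=-\lambda a^{-1}\). The remaining entries check the same way.

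Hence \(\exp(iyX)\in G\). The twisting condition \(\sigma(\phi(-\lambda))=\phi(\lambda)\) holds because \(\xi\) is \(\sigma\)-twisted; equivalently \(\sigma(X(-\lambda))=X(\lambda)\), which should be verified in the same way from \(-\Delta X(-\lambda)^t\Delta=X(\lambda)\). This sign and entry bookkeeping is the only computational point where an error could arise.

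\textbf{Step 2: \(\phi_+\) depends only on \(x\).} Write \(t=x+iy\), so \(\phi(t)=\phi(iy)\phi(x)\). Factor \(\phi(x)=F(x)\phi_+(x)\), assuming \(x\) lies where the factorization exists. Then
\begin{equation*}
\phi(t)=\bigl(\phi(iy)F(x)\bigr)\phi_+(x),
\end{equation*}
and \(\phi(iy)F(x)\in(\Lambda G)_\sigma\) by Step 1. By uniqueness in the theorem of Balan--Dorfmeister, \(F(t)=\phi(iy)F(x)\) and \(\phi_+(t)=\phi_+(x)\). More generally, \(F(t+iy')=\phi(iy')F(t)\).

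\textbf{Step 3: equivariance of \(f\).} Using \(F(t+iy')=\phi(iy')F(t)\),
\begin{equation*}
f(t+iy')=T\phi(iy')F(t)\begin{pmatrix}0\\1\\0\end{pmatrix}=T\phi(iy')T^{-1}f(t).
\end{equation*}
Reality follows as in Proposition \ref{prop2.1}. Since \(\tau(\phi(iy))=\phi(iy)\), we have \(\overline{\phi(iy)}=\Delta\phi(iy)\Delta\), and together with \(\overline T=T\Delta\) this gives
\begin{equation*}
\overline{T\phi(iy)T^{-1}}=T\Delta\Delta\phi(iy)\Delta\Delta T^{-1}=T\phi(iy)T^{-1}.
\end{equation*}
The determinant is \(1\) because \(\operatorname{tr}X=0\), so \(T\phi(iy)T^{-1}\in{\rm SL}_3\mathbb{R}\).

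Finally, \(f\) is a hyperbolic affine sphere by the corollary to the DPW proposition. The translation \(t\mapsto t+iy'\) acts on \(f\) by an equiaffine map, so \(f\) is translationally equivariant. This holds wherever the factorization exists, and the factorization domain is itself invariant under vertical translations by Step 2.
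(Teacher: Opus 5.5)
Your proof is correct and follows the same route as the paper. Both use the identity \(\phi(t)=\phi(iy)\phi(x)\), the fact that \(\phi(iy)\in(\Lambda G)_\sigma\), and uniqueness of the Iwasawa factorization to get \(\phi_+=\phi_+(x)\) and \(F(t+iy')=\phi(iy')F(t)\); reality of \(T\phi(iy)T^{-1}\) then follows from \(\overline{\phi(iy)}=\Delta\phi(iy)\Delta\) and \(\overline{T}=T\Delta\). You also spell out several checks the paper leaves implicit: \(\tau(iX)=iX\), the twisting condition, \(\det=1\), and the invariance of the factorization domain under vertical translations.
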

\begin{proof}
	Since \(\phi(t,\lambda) = \phi(iy,\lambda)\phi(x,\lambda)\) and \(\phi(iy,\lambda) \in (\Lambda G)_{\sigma}\), the positive part of \(\phi(t,\lambda)\) coincides with that of \(\phi(x,\lambda)\). Hence, \(\phi_+(t,\overline{t},\lambda) = \phi_+(x,\lambda)\). The latter follows from \(\overline{\phi(iy)} = \Delta\phi(iy)\Delta\) and \(\overline{T}\Delta T^{-1} = I_3\).
\end{proof}\vskip\baselineskip

Set
\begin{equation}
	\phi_0 = \phi_+(t,\overline{t},\lambda)|_{\lambda = 0} = \left(\begin{array}{ccc}
		e^{\frac{u}{2}} & 0 & 0  \\
		0 & 1 & 0  \\
		0 & 0  & e^{-\frac{u}{2}}
	\end{array}\right). \nonumber
\end{equation}
The following proposition then holds.

\begin{prop}
	\(u(t,\overline{t}) = u(x)\) is a translationally invariant solution to the Tzitz\'eica equation
	\begin{equation}
		u_{t\overline{t}} = a^{-2}e^u - a^4e^{-2u}, \nonumber
	\end{equation}
	with the initial condition \(u(0) = 0, u_t(0) = m\). Equivalently, \(u(x)\) satisfies the radial Tzitz\'eica equation
	\begin{equation}
		u_{xx} = 4a^{-2}e^u - 4a^4e^{-2u}, \nonumber
	\end{equation}
	with the initial condition \(u(0) = 0, u_x(0) = 2m\).
\end{prop}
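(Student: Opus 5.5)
Let \(\alpha := F^{-1}dF\). The plan is to compute \(\alpha\) directly from \(\alpha = \phi_+\xi\phi_+^{-1} - d\phi_+\,\phi_+^{-1}\), specialized to the present potential, where \(c_2 = ia^{-1}\) and \(c_1 = ia^2\). The general proposition on \(F^{-1}dF\) from Section 2.2 is stated for potentials whose only \(\lambda\)-dependence is the \(\lambda^{-1}C_{-1}\) term. Here \(\xi\) also contains a \(\lambda^0\) term \(m\,{\rm diag}(1,0,-1)\) and a \(\lambda^1\) term, so I will redo that argument rather than quote it.

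\textbf{Step 1: the \(dt\)-part of \(\alpha\).} By the previous lemma, \(\phi_+\) depends only on \(x\), so \(d\phi_+\,\phi_+^{-1} = (\phi_+)_x\phi_+^{-1}\,dx\) with \(dx = \tfrac12(dt+d\bar t)\). The \(\lambda^{-1}\)-coefficient of the \(dt\)-part of \(\alpha\) comes only from \(\phi_0 C_{-1}\phi_0^{-1}\). Conjugating \(C_{-1}\) by \(\phi_0 = {\rm diag}(e^{u/2},1,e^{-u/2})\) gives the entries \(ia^{-1}e^{u/2}\) in positions \((1,2)\) and \((2,3)\), and \(ia^2e^{-u}\) in position \((3,1)\).

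\textbf{Step 2: the \(d\bar t\)-part via the reality condition.} The condition \(\tau(\alpha)=\alpha\) forces the \(\lambda\)-part of the \(d\bar t\)-coefficient to be the \(\tau\)-conjugate of the \(\lambda^{-1}\)-part of the \(dt\)-coefficient. Twistedness (the \(S\)-symmetry) together with the fact that \(\alpha\) takes values in \((\Lambda\mathfrak{sl}_3\mathbb C)_\sigma\) forces the remaining \(\lambda^0\)-terms to be diagonal of the form \({\rm diag}(\tfrac{u_t}{2},0,-\tfrac{u_t}{2})\,dt + {\rm diag}(-\tfrac{u_{\bar t}}{2},0,\tfrac{u_{\bar t}}{2})\,d\bar t\). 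Here the term \(m\,{\rm diag}(1,0,-1)\,dt\) and the \(\lambda^0\)-contributions of \(\phi_+\) get absorbed into \(u_t\); that is why the \(\sigma\)- and \(\tau\)-constraints, and not a direct computation, determine the diagonal. I also need to argue that no \(\lambda^{\pm2}\) or higher terms survive. \(\alpha\) contains only \(\lambda^{-1},\lambda^0,\lambda^1,\dots\), since \(\phi_+\) is holomorphic in \(\lambda\) on the disk, and reality \(\tau\) maps \(\lambda\mapsto\bar\lambda^{-1}\). Hence \(\alpha\) is a Laurent polynomial of degree at most \(1\) in both directions.

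\textbf{Step 3: flatness.} With \(\alpha\) in hand, \(d\alpha+\alpha\wedge\alpha=0\), read off in the \((1,1)\)-entry at order \(\lambda^0\), gives \(u_{t\bar t} = |c_2|^2e^{u} - |c_1|^2e^{-2u} = a^{-2}e^u - a^4e^{-2u}\). This is the same computation as for the frame in Proposition \ref{prop2.1}, with the constants rescaled. Because \(u=u(x)\), we have \(\partial_t\partial_{\bar t} = \tfrac14\partial_x^2\), and this yields \(u_{xx}=4a^{-2}e^u-4a^4e^{-2u}\).

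\textbf{Step 4: initial conditions.} \(u(0)=0\) holds because \(\phi(0)=I_3\), whose Iwasawa factorization is trivial, so \(\phi_+(0)=I_3\). For \(u_t(0)\), differentiate \(\phi=F\phi_+\) at \(t=0\); this gives \(\xi(0) = \alpha_{dt}(0) + (\phi_+)_t(0)\). Compare \(\lambda^0\) diagonal parts, using \((\phi_+)_t = \tfrac12(\phi_+)_x\) and \(\alpha\)'s \(dt\) diagonal \(\tfrac{u_t}{2}\,{\rm diag}(1,0,-1)\). The \((1,1)\) entries give \(m = \tfrac{u_t}{2} + \tfrac{u_x}{4}\), and since \(u_t=\tfrac12 u_x\), this gives \(m = \tfrac{u_x}{2}\). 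Hence \(u_x(0)=2m\) and \(u_t(0)=m\).

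The main obstacle is Step 2: justifying cleanly that the \(\lambda^0\) part of \(\alpha\) is exactly the claimed diagonal. It requires combining the \(S\)-twist (which makes the \(\lambda^0\) coefficient diagonal), \(\sigma\) (which makes the middle entry zero and the outer two opposite), and \(\tau\) (which relates the \(dt\) and \(d\bar t\) parts) carefully, while the extra \(m\) and \(\lambda\) terms of \(\xi\) are present.
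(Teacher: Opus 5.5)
Your overall route matches the paper's. Flatness of \(\alpha\), read off in the \((1,1)\) entry at order \(\lambda^0\), gives \(u_{t\bar t}=|c_2|^2e^u-|c_1|^2e^{-2u}\) with \(c_2=ia^{-1}\), \(c_1=ia^2\). Comparing \(\lambda^0\) diagonal parts of \(\xi=\alpha_{dt}(0)+(\phi_+)_t(0)\) at \(t=0\) gives \(u_t(0)=m\). Step 1, the truncation argument in Step 2 (holomorphy plus \(\tau\)), and Steps 3--4 are fine once the \(\lambda^0\) part of \(\alpha\) is known. The gap is in how you propose to know it. The \(S\)-twist and \(\sigma\) only give the shape \(\mathrm{diag}(p,0,-p)\,dt+\mathrm{diag}(s,0,-s)\,d\bar t\), and \(\tau\) only adds \(s=-\bar p\). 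Nothing in these symmetry constraints ties \(p\) to the function \(u\) defined by \(\phi_0\), so your claim that they, ``and not a direct computation,'' determine the diagonal is false. Without \(p=u_t/2\), the \((1,1)\) entry of the flatness equation reads \(s_t-p_{\bar t}+a^{-2}e^u-a^4e^{-2u}=0\), which is not the Tzitz\'eica equation for \(u\). The comparison in Step 4 also loses its content.

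The missing ingredient is a direct computation, but of the \(d\bar t\) part rather than the \(dt\) part. Since \(\xi\) has no \(d\bar t\) component, the \(d\bar t\) part of \(\alpha\) is \(-(\phi_+)_{\bar t}\phi_+^{-1}\). Its \(\lambda^0\) coefficient is \(-(\phi_0)_{\bar t}\phi_0^{-1}=\mathrm{diag}(-\tfrac{u_{\bar t}}{2},0,\tfrac{u_{\bar t}}{2})\). Then \(\tau\), together with the reality of \(u\) (from \(\phi_0\in B\)), gives \(p=-\bar s=u_t/2\). Equivalently, the \(\lambda^{-1}\) coefficient of the flatness equation, \(-(U_{-1})_{\bar t}+[U_{-1},V_0]=0\), forces \(s=-u_{\bar t}/2\). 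You are right that the \(\lambda^0\) part of the \(dt\) component cannot be computed directly, because it involves \(\phi_1\) and the \(m\) term; that is exactly why one goes through \(d\bar t\). This is precisely the proof of the proposition on \(F^{-1}dF\) in Section 2.2, and that proposition does apply here. The DPW potentials there are allowed to carry \(\sum_{j\ge 0}C_j(t)\lambda^j\), so your reading that it only covers a single \(\lambda^{-1}C_{-1}\) term is mistaken. The paper's proof simply invokes it with \(c_2=ia^{-1}\), \(c_1=ia^2\), so your Steps 1--2 can be replaced by that citation.
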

\begin{proof}
	Let \(\alpha = F^{-1}dF\). Since \(d\alpha + \alpha \wedge \alpha = 0\), it follows that \(u\) is a solution of the Tzitz\'eica equation. From \(\phi_+(0,\lambda) = I\), we obtain \(u(0) = 0\). Let \(\phi_+ = \phi_0 + \lambda \phi_1 + O(\lambda^2)\), and write \(\xi = \lambda^{-1}iCdt + Ndt + \lambda iC^tdt\). Then, using
	\begin{equation}
		\alpha = F^{-1}dF = \phi_+\xi \phi_+^{-1} -d\phi_+ \phi_+^{-1}, \nonumber
	\end{equation}
	we obtain
	\begin{equation}
		(\phi_0)_t = \phi_0N + i\phi_1C - (\phi_0)_t - i\phi_0C\phi_0^{-1}\phi_1. \nonumber
	\end{equation}
	
	Setting \(t = 0\), and using the fact that \(\phi_1|_{t=0} = 0\), we obtain
	\small
	\begin{align}
		&\left(\begin{array}{ccc}
			\frac{u_t(0)}{2}e^{\frac{u(0)}{2}} & 0  & 0  \\
			0 & 0 & 0  \\
			0 & 0  & -\frac{u_t(0)}{2}e^{-\frac{u(0)}{2}}
		\end{array}\right) \nonumber\\ 
		&= \left(\begin{array}{ccc}
			e^{\frac{u(0)}{2}} & 0  & 0  \\
			0 & 1 & 0  \\
			0 & 0  & e^{-\frac{u(0)}{2}}
		\end{array}\right)\left(\begin{array}{ccc}
			m & 0  & 0  \\
			0 & 0 & 0  \\
			0 & 0  & -m
		\end{array}\right) - \left(\begin{array}{ccc}
			\frac{u_t(0)}{2}e^{\frac{u(0)}{2}} & 0  & 0  \\
			0 & 0 & 0  \\
			0 & 0  & -\frac{u_t(0)}{2}e^{-\frac{u(0)}{2}}
		\end{array}\right). \nonumber
	\end{align}
	\normalsize
	Hence, we conclude that \(u_t(0) = m\), and therefore \(u_x(0)=2m\).
\end{proof}\vskip\baselineskip

By integrating the Tzitz\'eica equation once, we obtain that \(u\) satisfies
\begin{equation}
	\frac{1}{2}(u_x)^2 = 4a^{-2}e^u + 2a^4e^{-2u} + 6M, \nonumber
\end{equation}
where
\begin{equation}
	M = \frac{1}{3}(m^2 - 2a^{-2} - a^4). \nonumber
\end{equation}

\begin{prop}\label{prop3.2}
	The function \(2a^{-2}e^u + M\) can be expressed in terms of the Weierstrass \(\wp\)-function as \(\wp(x+\beta) = 2a^{-2}e^u + M\), where
	\begin{align}
		&\wp(\beta) = 2a^{-2} + M = \frac{1}{3}(m^2 + 4a^{-2} - a^4), \nonumber\\
		&\wp_x(\beta) =  4a^{-2}m. \nonumber
	\end{align}
\end{prop}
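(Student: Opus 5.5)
Set \(P(x) = 2a^{-2}e^{u(x)} + M\). The plan is to show that \(P\) satisfies the Weierstrass differential equation \((P')^2 = 4P^3 - g_2P - g_3\) for suitable real invariants \(g_2, g_3\). We then identify \(P\) with a translate \(\wp(x+\beta)\), using uniqueness of solutions with prescribed initial data away from the critical points.

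\textbf{Step 1: the differential equation for \(P\).} From \(P_x = 2a^{-2}e^u u_x\) we get \((P_x)^2 = 4a^{-4}e^{2u}(u_x)^2\). Substitute the first integral \((u_x)^2 = 8a^{-2}e^u + 4a^4e^{-2u} + 12M\). This gives
\begin{equation*}
(P_x)^2 = 32a^{-6}e^{3u} + 48Ma^{-4}e^{2u} + 16.
\end{equation*}
Now write \(e^u = \tfrac{a^2}{2}(P-M)\). Then \(32a^{-6}e^{3u} = 4(P-M)^3\) and \(48Ma^{-4}e^{2u} = 12M(P-M)^2\). Expanding,
\begin{equation*}
4(P-M)^3 + 12M(P-M)^2 = 4P^3 - 12M^2P + 8M^3.
\end{equation*}
The \(P^2\) terms cancel, which is exactly why the shift by \(M\) was chosen. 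Hence \((P_x)^2 = 4P^3 - g_2P - g_3\) with
\begin{equation*}
g_2 = 12M^2, \qquad g_3 = -8M^3 - 16,
\end{equation*}
both real. Differentiating once gives \(P_{xx} = 6P^2 - g_2/2\) wherever \(P_x \neq 0\). This second-order equation can also be checked directly from the radial Tzitz\'eica equation \(u_{xx} = 4a^{-2}e^u - 4a^4e^{-2u}\), so it holds everywhere.

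\textbf{Step 2: choice of \(\beta\).} Let \(\wp\) be the Weierstrass function with invariants \(g_2, g_3\). We need the discriminant \(g_2^3 - 27g_3^2\) to be nonzero, so that \(\wp\) is a genuine elliptic function. It equals \(-1728(M^3+2)^2 + 1728M^6 = -1728(4M^3+4)\). The degenerate case \(M^3 = -1\) must either be excluded or handled by the trigonometric or hyperbolic degeneration. I would note this and, if needed, treat it as a limit.

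Since \(\wp\colon \mathbb{C}/\Lambda \to \mathbb{P}^1\) has degree 2 and \((\wp',\wp)\) parametrizes the cubic curve bijectively, there is a unique \(\beta\) modulo the lattice with \(\wp(\beta) = P(0) = 2a^{-2} + M\) and \(\wp'(\beta) = P_x(0)\). The initial conditions \(u(0)=0\) and \(u_x(0)=2m\) give \(P_x(0) = 2a^{-2}\cdot 2m = 4a^{-2}m\). The point \((4a^{-2}m,\, 2a^{-2}+M)\) indeed lies on the curve, because Step 1 evaluated at \(x=0\) is precisely the first integral at \(x=0\). The value \(\frac13(m^2+4a^{-2}-a^4)\) for \(\wp(\beta)\) is direct arithmetic.

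\textbf{Step 3: uniqueness.} Both \(P(x)\) and \(\wp(x+\beta)\) solve the second-order ODE \(y'' = 6y^2 - g_2/2\), which is regular. They have the same initial value and the same initial derivative at \(x=0\). By Picard--Lindel\"of they coincide on the maximal interval of existence. Since \(P\) is real-analytic and defined on all of \(\mathbb{R}\), \(\wp(x+\beta)\) stays finite for real \(x\).

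Working with the second-order equation rather than the first-order one avoids the non-uniqueness issue at the turning points where \(P_x = 0\). This is the main subtlety. The main computational obstacle is Step 1, namely confirming the cancellation of the \(P^2\) term and getting the constants right.
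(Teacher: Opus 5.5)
Your route is the paper's: you set $P=2a^{-2}e^{u}+M$, use the first integral to get $(P_x)^2=4P^3-12M^2P+8(M^3+2)$, and fix $\beta$ by $P(0)=2a^{-2}+M$ and $P_x(0)=4a^{-2}m$. Your constants all check out. The paper simply asserts ``there exists a constant $\beta$.'' Your two additions genuinely tighten this. The first is the bijection between $(\mathbb{C}/\Lambda)\setminus\{0\}$ and the affine cubic. The second is uniqueness via the second-order equation $P_{xx}=6P^2-6M^2$. This matters because for $m=0$ the value $P(0)$ is a root of the cubic, and the first-order equation then also admits the constant solution $P\equiv P(0)$.

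\textbf{Error in Step 3.} The sentence claiming that $P$ is defined on all of $\mathbb{R}$ is false and should be deleted. Use $\tfrac12 u_x^2=4a^{-2}e^{u}+2a^4e^{-2u}+6M$ together with $\min_u(4a^{-2}e^u+2a^4e^{-2u})=6$. From these one checks that for $M\neq-1$ the maximal interval of existence of $u$ is bounded at both ends. At one kind of endpoint $u\to+\infty$, where $x+\beta$ hits the lattice, i.e.\ a real pole of $\wp(x+\beta)$. At the other kind $u\to-\infty$, where $\wp(x+\beta)=M$. This is exactly why the paper introduces $\tilde{\Lambda}$ and works only on the component of $\mathbb{R}\setminus\tilde{\Lambda}$ containing $0$. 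Your Picard--Lindel\"of argument already gives the identity on the maximal interval of $u$, which is all the proposition asserts, so nothing else needs to change.

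\textbf{Degenerate case.} Your worry about $g_2^3-27g_3^2=-6912(M^3+1)$ is warranted. The case $M=-1$ does occur: $a=1$, $m=0$ gives $u\equiv 0$ and $P\equiv 1$. This is the double root of $4(P-1)^2(P+2)$. It is not $\wp(x+\beta)$ for any finite $\beta$, even for the degenerate function $1+3\,\mathrm{csch}^2(\sqrt{3}z)$. So the statement tacitly requires $M\neq -1$, which the paper does not mention.
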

\begin{proof}
	Set \(p(x) = 2a^{-2}e^u + M\). Then
	\begin{align}
		(p_x)^2 &= 4a^{-4}(u_x)^2e^{2u} = 4p^3 - 12M^2p + 8(M^3+2). \nonumber
	\end{align}
	Hence, \(p(x) = \wp(x+\beta)\) satisfies the differential equation of  the Weierstrass \(\wp\)-function. Therefore, there exists a constant \(\beta\) such that \(p(x) = \wp(x+\beta)\). Moreover,
	\begin{align}
		&\wp(\beta) = 2a^{-2} + M = \frac{1}{3}(m^2 + 4a^{-2} - a^4),\ \ \ \ \ \wp_x(\beta) =  4a^{-2}m. \nonumber
	\end{align}
\end{proof}\vskip\baselineskip

Let \(\omega_1\) and \(\omega_2\) be the fundamental periods of the Weierstrass elliptic function \(\wp\), and define
\begin{equation}
	\tilde{\Lambda} = \{x \in \mathbb{C}\ | \ x + \beta \in \omega_1\mathbb{Z}+\omega_2\mathbb{Z} \ \text{or}\ \wp(x+\beta) = M\}.\nonumber
\end{equation}

The translationally invariant solution \(u = u(x)\) is defined on the connected component of \(\mathbb{R} \backslash \tilde{\Lambda}\) containing \(0\).

\subsection{Iwasawa factorization away from countably many lines}
In this section, we show that the Iwasawa factorization induces a certain differential equation and we describe the Iwasawa factorization on \(\mathbb{C}\) by using the solutions.\vskip\baselineskip

The existence of the Iwasawa factorization is equivalent to the solvability of a certain linear differential equation.
\begin{prop}\label{prop3.3}
	Let \(\phi_+ = \phi_+(x,\lambda)\) be a solution to
	\begin{equation}\label{L}
		(\phi_+)_x = \left(\begin{array}{ccc}
			\frac{u_x}{2} & 0 & 2\lambda i a^2e^{-u} \\
			2\lambda ia^{-1}e^{\frac{u}{2}} & 0 & 0 \\
			0 & 2\lambda i a^{-1}e^{\frac{u}{2}} & -\frac{u_x}{2}
		\end{array}\right)\phi_+,\ \ \ \ \ \phi_+(0,\lambda) = I_3.
	\end{equation}
	Then \(\phi_+ = \phi_+(x) \in (\Lambda^+_B{\rm SL}_3 \mathbb{C})_{\sigma}\) and \(F = \phi \phi_+^{-1}\) satisfies \(F^{-1}dF = \alpha\).
\end{prop}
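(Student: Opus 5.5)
The strategy is to verify two things directly: that \(\phi_+\) lies in the twisted positive loop group, and that the Maurer--Cartan form of \(F=\phi\phi_+^{-1}\) equals \(\alpha\). For the second, I will show that both sides satisfy the same linear ODE in \(x\) with the same initial value, and conclude by uniqueness. Write \(A(x,\lambda)\) for the coefficient matrix in (\ref{L}), so that \((\phi_+)_x=A\phi_+\). Throughout, \(u=u(x)\) is the translationally invariant solution of the Tzitz\'eica equation from Section 3.1, considered on the interval around \(0\) where it is defined.

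\textbf{Step 1: \(\phi_+\) is a twisted positive loop.} The matrix \(A\) is affine in \(\lambda\) with \(\operatorname{tr}A=0\). Hence \(\phi_+\) is entire in \(\lambda\) and \(\det\phi_+\equiv1\), so \(\phi_+\) and \(\phi_+^{-1}\) extend holomorphically to \(I\).

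A direct check gives \(\sigma(A(-\lambda))=A(\lambda)\), i.e. \(-\Delta A(-\lambda)^t\Delta=A(\lambda)\). Therefore \(\sigma(\phi_+(x,-\lambda))\) solves the same initial value problem as \(\phi_+\), and uniqueness gives the twisting condition. Similarly \(S^{-1}A(\omega\lambda)S=A(\lambda)\) gives the \(\mathbb{Z}_3\)-symmetry.

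At \(\lambda=0\) the matrix \(A\) is \(\mathrm{diag}(u_x/2,0,-u_x/2)\). Hence \(\phi_+(x,0)=\mathrm{diag}(e^{u/2},1,e^{-u/2})\), which lies in \(B\) (take \(b=0\), with real diagonal parameter \(u/2\)).

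\textbf{Step 2: the Maurer--Cartan form.} Because \(\phi_+\) depends only on \(x\), we have \(d\phi_+=\tfrac12 A\phi_+(dt+d\bar t)\). Therefore
\begin{equation*}
F^{-1}dF=\phi_+\xi\phi_+^{-1}-d\phi_+\phi_+^{-1}=\Bigl(Z-\tfrac12A\Bigr)dt-\tfrac12A\,d\bar t,\qquad Z:=\phi_+X\phi_+^{-1}.
\end{equation*}
Let \(\alpha=\alpha_t\,dt+\alpha_{\bar t}\,d\bar t\) be the form of Proposition 2.3 with \(w=u\), \(c_2=ia^{-1}\), \(c_1=ia^2\). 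Since \(w_{\bar t}=u_x/2\) and \(\bar c_j=-c_j\), an entrywise comparison gives \(A=-2\alpha_{\bar t}\). So the \(d\bar t\)-parts agree, and it remains to prove \(Z=\tilde Z:=\alpha_t+\tfrac12A\).

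First, \(Z_x=[A,Z]\) and \(Z(0)=X\).

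Second, since \(\alpha\) depends only on \(x\), we have \(\partial_t=\partial_{\bar t}=\tfrac12\partial_x\) on its coefficients. The flatness of \(\alpha\), which is equivalent to the Tzitz\'eica equation satisfied by \(u\), then reads
\begin{equation*}
\tfrac12(\alpha_{\bar t}-\alpha_t)_x+[\alpha_t,\alpha_{\bar t}]=0.
\end{equation*}
Substituting \(\alpha_{\bar t}=-A/2\), this becomes \((\alpha_t)_x=-\tfrac12A_x+[A,\alpha_t]\). Consequently
\begin{equation*}
\tilde Z_x=[A,\alpha_t]=[A,\tilde Z].
\end{equation*}

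Third, at \(x=0\) we use \(u(0)=0\) and \(u_x(0)=2m\) from Proposition 3.2. The diagonal of \(\tilde Z(0)\) is \((m/2+m/2,\,0,\,-m)=(m,0,-m)\). The off-diagonal entries of \(\tilde Z(0)\) reproduce \(\lambda^{-1}iC+\lambda iC^t\). Hence \(\tilde Z(0)=X\), and uniqueness for the linear ODE gives \(Z=\tilde Z\). This yields \(F^{-1}dF=\alpha\).

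Finally, \(\tau(\alpha)=\alpha\) on \(S^1\) and \(F(0)=I_3\), so \(\tau(F)\) satisfies the same initial value problem as \(F\); hence \(\tau(F)=F\) and \(F\in(\Lambda G)_\sigma\). The \(\sigma\)-twisting of \(F\) is inherited from that of \(\phi\) and \(\phi_+\).

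\textbf{Main obstacle.} The main work is the careful entrywise bookkeeping. One must confirm that \(A=-2\alpha_{\bar t}\) with the specific constants \(c_1,c_2\) and their conjugates. One must also check that the normalization \(w=u\) matches the Tzitz\'eica equation \(u_{t\bar t}=a^{-2}e^u-a^4e^{-2u}\), with the factors \(c_1\bar c_1=a^4\) and \(c_2\bar c_2=a^{-2}\) producing exactly those coefficients. Once these identifications are settled, the uniqueness argument above is routine.
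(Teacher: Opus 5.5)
Your proposal is correct and follows essentially the same route as the paper: your $\tilde Z=\alpha_t+\tfrac12A$ is exactly the paper's $L_2$, and showing that $Z=\phi_+X\phi_+^{-1}$ and $\tilde Z$ both solve $Y_x=[A,Y]$ with $Y(0)=X$ is equivalent to the paper's observation that $\phi_+^{-1}L_2\phi_+$ is constant and equal to $L_2(0)=X$, after which $A=L_2-L_1$ gives the other component of $F^{-1}dF=\alpha$. The differences are minor: you obtain $(L_2)_x=[A,L_2]$ from the zero-curvature form of the Tzitz\'eica equation rather than by the paper's entrywise matrix computation, and you check the twisting, the $\lambda=0$ normalization and $\tau(F)=F$ explicitly (note that your identification $A=-2\alpha_{\bar t}$ needs the $(3,2)$ entry of the $d\bar t$-part in the paper's Section~2 formula for $F^{-1}dF$ to be $\lambda\bar c_2e^{w/2}$, as the reality condition forces, rather than the misprinted $\lambda\bar c_2e^{-w}$).
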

\begin{proof}
	Set \(L = (\phi_+)_x\phi_+^{-1}\). By \(\phi_+ = \phi_+(x,\lambda)\),
	\begin{equation}
		\alpha = F^{-1}dF = \phi_+\xi \phi_+^{-1} - d\phi_+\phi_+^{-1} \nonumber
	\end{equation}
	is equivalent to
	\begin{align}\label{A}
		(\phi_+)_x &= \phi_+X - \left(\begin{array}{ccc}
			0 & \lambda^{-1}ia^{-1}e^{\frac{u}{2}} & -\lambda i a^2e^{-u} \\
			-\lambda ia^{-1}e^{\frac{u}{2}} & 0 & \lambda^{-1}i a^{-1}e^{\frac{u}{2}} \\
			\lambda^{-1}ia^2e^{-u} & -\lambda i a^{-1}e^{\frac{u}{2}} & 0
		\end{array}\right)\phi_+ \nonumber\\
		&=: \phi_+X - L_1\phi_+, \nonumber\\
		0 &= \phi_+X - \left(\begin{array}{ccc}
			\frac{u_x}{2} & \lambda^{-1}ia^{-1}e^{\frac{u}{2}} & \lambda i a^2e^{-u} \\
			\lambda ia^{-1}e^{\frac{u}{2}} & 0 & \lambda^{-1}i a^{-1}e^{\frac{u}{2}} \\
			\lambda^{-1}ia^2e^{-u} & \lambda i a^{-1}e^{\frac{u}{2}} & -\frac{u_x}{2}
		\end{array}\right)\phi_+ \nonumber\\
		&=: \phi_+X - L_2\phi_+.
	\end{align}
	Hence, it suffices to show that \(\phi_+\) satisfies (\ref{A}).\vskip\baselineskip
	
	We have
	\begin{align}
		&(L_2\phi_+)_x \nonumber\\
		&= \left(\begin{array}{ccc}
			\frac{u_{xx}}{2} & \frac{\lambda^{-1}}{2}ia^{-1}u_xe^{\frac{u}{2}} & -\lambda i a^2u_xe^{-u} \\
			\frac{\lambda}{2} ia^{-1}u_xe^{\frac{u}{2}} & 0 & \frac{\lambda^{-1}}{2}i a^{-1}u_xe^{\frac{u}{2}} \\
			-\lambda^{-1}ia^2u_xe^{-u} & \frac{\lambda}{2} i a^{-1}u_xe^{\frac{u}{2}} & -\frac{u_{xx}}{2}
		\end{array}\right)\phi_+ + L_2L\phi_+ \nonumber\\
		&= \text{\scriptsize \(\left(\begin{array}{ccc}
				\frac{u_{xx}}{2} + \frac{(u_x)^2}{4} - 2a^{-2}e^u & \frac{\lambda^{-1}}{2}ia^{-1}u_xe^{\frac{u}{2}} - 2\lambda^2ae^{-\frac{u}{2}} & -\frac{\lambda}{2} i a^2u_xe^{-u} \\
				\lambda i a^{-1}u_xe^{\frac{u}{2}} & -2a^{-2}e^u & -2\lambda^2ae^{-\frac{u}{2}} \\
				-\frac{\lambda^{-1}}{2}ia^2u_xe^{-u} -2\lambda^2a^{-2}e^u & -\frac{\lambda}{2} ia^{-1}u_xe^{\frac{u}{2}} & -\frac{u_{xx}}{2} + \frac{(u_x)^2}{4} - 2a^4e^{-2u}
			\end{array}\right)\phi_+\)} \nonumber\\
		&= \text{\footnotesize\(\left(\begin{array}{ccc}
				\frac{u_x}{2} & 0 & 2\lambda i a^2e^{-u} \\
				2\lambda ia^{-1}e^{\frac{u}{2}} & 0 & 0 \\
				0 & 2\lambda i a^{-1}e^{\frac{u}{2}} & -\frac{u_x}{2}
			\end{array}\right)\left(\begin{array}{ccc}
				\frac{u_x}{2} & \lambda^{-1}ia^{-1}e^{\frac{u}{2}} & \lambda i a^2e^{-u} \\
				\lambda ia^{-1}e^{\frac{u}{2}} & 0 & \lambda^{-1}i a^{-1}e^{\frac{u}{2}} \\
				\lambda^{-1}ia^2e^{-u} & \lambda i a^{-1}e^{\frac{u}{2}} & -\frac{u_x}{2}
			\end{array}\right)\phi_+\)} \nonumber\\
		&= LL_2\phi_+. \nonumber
	\end{align}
	and hence \(L_2\phi_+ = \phi_+\gamma\) for some constant matrix \(\gamma \in (\Lambda {\rm SL}_3 \mathbb{C})_{\sigma}\). Since \(\phi_+(0,\lambda) = I_3\), it follows that \(\gamma = L_2(0,\lambda) = X\). Therefore, we obtain the second equation in (\ref{A}). Using this equation together with (\ref{L}) and the relation \(L = L_2 - L_1\), we obtain
	\begin{equation}
		(\phi_+)_x = L\phi_+ = (L_2 - L_1)\phi_+ = \phi_+X - L_1 \phi_+. \nonumber
	\end{equation}
	Hence, the first equation in (\ref{A}) follows, and moreover \(F^{-1}dF=\alpha\). It follows from \((\phi_+)_x\phi_+^{-1}|_{\lambda = 0} = {\rm diag}(u_x/2,0,-u_x/2)\) that \(\phi_+ \in (\Lambda^+_B{\rm SL}_3 \mathbb{C})_{\sigma}\).
\end{proof}\vskip\baselineskip

Hence, we obtain the Iwasawa factorization \(\phi = F\phi_+\) on the connected component of \(D\) containing \(0\). Since the Weierstrass elliptic function \(\wp\) is meromorphic on \(\mathbb{C}\), the same argument applies on all of \(\mathbb{C} \backslash \tilde{\Lambda}\).\vskip\baselineskip

Let \(\tilde{\Phi}_+^{-1} = \tilde{\Phi}_+^{-1}(t,\lambda)\) be a solution of
\begin{equation}\label{A'}
	(\tilde{\Phi}_+^{-1})_t =
	\tilde{\Phi}_+^{-1}\left(\begin{array}{ccc}
		-\frac{\wp_t(t+\beta)}{\wp(t+\beta) - M} & 0 & -2\lambda ia^2 \\
		-2\lambda ia^{-1} & 0 & 0 \\
		0 & -2\lambda ia^{-1} & \frac{\wp_t(t+\beta)}{\wp(t+\beta) - M}
	\end{array}\right),\ \ \ \ \ \tilde{\Phi}_+^{-1}(0,\lambda) = I_3, \tag{A}
\end{equation}
on a simply-connected subset of \(\mathbb{C} \backslash \tilde{\Lambda}\), where
\begin{equation}
	\wp(\beta) = \frac{1}{3}(m^2 + 4a^{-2} - a^4),\ \ \ \ \ \wp_t(\beta) =  4a^{-2}m, \nonumber
\end{equation}
and set
\begin{equation}
	\tilde{F}(t,\lambda) = \phi(t,\lambda)\tilde{\Phi}_+(t,\lambda)^{-1} \nonumber
\end{equation}
and
\begin{equation}
	g(t) = \left(\begin{array}{ccc}
		\frac{a^2(\wp(t+\beta) - M)}{2} & 0&0 \\
		0 & 1 & 0 \\
		0 & 0 & \frac{2}{a^2(\wp(t+\beta) - M)}
	\end{array}\right). \nonumber
\end{equation}

\begin{lem}\label{lem3.2}
	We have \(\Delta\overline{\tilde{F}({\bar t},\lambda)}\Delta = \tilde{F}(t,\lambda)g(t)\) for all \(\lambda \in S^1\).
\end{lem}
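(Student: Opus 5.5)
The plan is to compare the two sides of the identity through the holomorphic ODEs they satisfy in \(t\), and then conclude by uniqueness. Set \(P(t)=\wp(t+\beta)-M\). Two preliminary observations are needed.

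\begin{itemize}
\item For \(\lambda\in S^1\) we have \(\overline{\lambda}=\lambda^{-1}\). A direct check then gives \(\Delta\overline{X(\lambda)}\Delta=-X(\lambda)\), so
\[
\Delta\overline{\phi(\overline{t},\lambda)}\Delta=\exp(-tX)=\phi(-t,\lambda).
\]
This is consistent with \(\phi(iy)\in(\Lambda G)_\sigma\).
\item The invariants \(g_2=12M^2\) and \(g_3=-8(M^3+2)\) from the proof of Proposition \ref{prop3.2} are real, and \(\wp(\beta)\), \(\wp'(\beta)\) are real. Hence \(\overline{\wp(\overline{t}+\beta)}=\wp(t+\beta)\), that is, \(\overline{P(\overline{t})}=P(t)\).
\end{itemize}

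Finally, since \(\wp(\beta)-M=2a^{-2}\), we get \(g(0)=I_3\).

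\textbf{Step 1 (main obstacle).} Compute the Maurer--Cartan form of \(\tilde F\) explicitly. Write \(A(t)\) for the coefficient matrix in (\ref{A'}). Then
\[
\tilde F^{-1}\tilde F_t=\tilde\Phi_+X\tilde\Phi_+^{-1}+A,
\]
so everything reduces to identifying \(C(t):=\tilde\Phi_+X\tilde\Phi_+^{-1}\). From \((\tilde\Phi_+)_t=-A\tilde\Phi_+\) one gets the Lax equation
\[
C_t=[C,A],\qquad C(0)=X.
\]
Following the trick in the proof of Proposition \ref{prop3.3}, I will write down an explicit candidate \(C^\ast(t)\). It is obtained from \(L_2\) by substituting \(e^u=\tfrac{a^2}{2}P\) and conjugating by the diagonal gauge \(g^{1/2}\) (with the sign matrix \(\mathcal E\)). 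Concretely, its entries will be \(\lambda^{\mp1}\) times constants or \(P^{\pm1}\), and its diagonal will be \(\pm P_t/(2P)\). I will then verify by direct computation that \(C^\ast_t=[C^\ast,A]\). This uses only \(P_t=\wp'\), \(P_{tt}=6\wp^2-6M^2\) and the cubic relation \((P_t)^2=4\wp^3-12M^2\wp+8(M^3+2)\). One also checks \(C^\ast(0)=X\). Uniqueness for linear ODEs then gives \(C=C^\ast\), so \(\tilde F^{-1}d\tilde F=(C^\ast+A)\,dt=:\tilde\alpha\,dt\) with \(\tilde\alpha\) explicit and meromorphic in \(\wp\). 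Guessing the correct \(C^\ast\) and organizing the verification with the \(\wp\)-identities is the step where I expect the real work.

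\textbf{Step 2.} Set \(H(t)=\Delta\overline{\tilde F(\overline t,\lambda)}\Delta\), which is holomorphic in \(t\). By Step 1 it satisfies
\[
H^{-1}H_t=\Delta\overline{\tilde\alpha(\overline t,\lambda)}\Delta,\qquad H(0)=I_3 .
\]
On the other side, \(K(t):=\tilde F(t)g(t)\) satisfies
\[
K^{-1}K_t=g^{-1}\tilde\alpha g+g^{-1}g_t,\qquad K(0)=I_3,
\]
where \(g^{-1}g_t=\mathrm{diag}(P_t/P,0,-P_t/P)\). I will compare the two right-hand sides entrywise, using \(\overline{P(\overline t)}=P(t)\), \(\overline\lambda=\lambda^{-1}\) and the index reversal effected by \(\Delta\). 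Conjugation by \(g\) rescales the off-diagonal entries by powers of \(\tfrac{a^2}{2}P\); this converts the constant entries \(\mp2\lambda i a^2\) and \(\mp2\lambda i a^{-1}\) of \(A\) into the \(P\)-dependent entries of \(C^\ast\) and conversely. The term \(g^{-1}g_t\) corrects the diagonal from \(-P_t/P\) to \(+P_t/P\) in exactly the way required. Once the two Maurer--Cartan forms agree and \(H(0)=K(0)=I_3\), uniqueness of solutions gives \(H=K\) on any simply connected subset of \(\mathbb C\setminus\tilde\Lambda\), which is the claim of Lemma \ref{lem3.2}.
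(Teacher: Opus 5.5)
Your proof is correct. Its overall architecture is the paper's: identify \(\tilde\Phi_+X\tilde\Phi_+^{-1}\) explicitly, write \(\tilde F^{-1}\tilde F_t\) as that matrix plus \(A\), then compare \(\Delta\overline{(\tilde F^{-1}\tilde F_t)(\bar t)}\Delta\) with \((\tilde Fg)^{-1}(\tilde Fg)_t\) and conclude by uniqueness from \(\tilde F(0)g(0)=I_3\). Your Step 2 is the paper's argument essentially verbatim. Your candidate \(C^\ast=g^{1/2}L_2g^{-1/2}\), with \(e^u=\tfrac{a^2}{2}P\), is exactly the matrix appearing in the paper's identity (\ref{E}).

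The difference is in how (\ref{E}) is proved.

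\textbf{The paper's route.} It stays on the real segment through \(0\). With \(V=\mathrm{diag}(e^{-u/2},1,e^{u/2})\) it shows \((\tilde\Phi_+^{-1}V^{-1}\phi_+)_x=0\), hence \(\tilde\Phi_+^{-1}=\phi_+^{-1}V\) there. Then (\ref{E}) is simply \(V^{-1}(\phi_+X\phi_+^{-1})V=V^{-1}L_2V\), using the relation \(\phi_+X=L_2\phi_+\) already established in Proposition \ref{prop3.3}. The identity theorem then extends this to \(\mathbb C\setminus\tilde\Lambda\). So the paper reuses the real Iwasawa factor \(\phi_+\) and the Lax-type computation already done. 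The cost is working with \(e^{u/2}\), which needs \(P>0\) on that segment, followed by analytic continuation.

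\textbf{Your route.} The Lax equation \(C_t=[C,A]\), \(C(0)=X\), together with a direct check for \(C^\ast\), is purely holomorphic and independent of Proposition \ref{prop3.3}. The check is lighter than you expect. All off-diagonal entries of \(C^\ast_t\) and \([C^\ast,A]\) agree identically. The two diagonal entries both reduce to the single scalar identity \((\log P)_{tt}=2P-16P^{-2}\). This is the Tzitz\'eica equation for \(e^u=\tfrac{a^2}{2}P\), and it follows from \(\wp''=6\wp^2-6M^2\) and the cubic relation, as you say.

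\textbf{Minor corrections.}
\begin{enumerate}
\item The \((3,1)\) entry of \(C^\ast\) is \(4\lambda^{-1}ia^{-2}P^{-2}\), so not every entry is a constant or \(P^{\pm1}\).
\item The sign matrix \(\mathcal E\) plays no role. Conjugating \(L_2\) by \(g^{1/2}\) only produces \(e^{u}\) and \(e^{-2u}\), which are rational in \(P\), so no square roots arise.
\item Your first bullet, \(\Delta\overline{\phi(\bar t)}\Delta=\phi(-t)\), is true but never used.
\item For \(H(t)=\Delta\overline{\tilde F(\bar t)}\Delta\) to be defined, choose the simply connected domain invariant under \(t\mapsto\bar t\). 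This is possible because \(\overline{P(\bar t)}=P(t)\) makes \(\tilde\Lambda\) conjugation-invariant. The paper also leaves this implicit.
\end{enumerate}
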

\begin{proof}
	First, we prove
	\begin{align}\label{E}
		\tilde{\Phi}_+^{-1}\left(\begin{array}{ccc}
			\frac{\wp_t(t+\beta)}{2(\wp(t+\beta) - M)} & \lambda^{-1}\frac{ia(\wp(t+\beta) - M) }{2}& \lambda i a^2 \\
			\lambda i a^{-1} & 0 & \lambda^{-1}\frac{ia(\wp(t+\beta) - M)}{2} \\
			\frac{4\lambda^{-1}ia^{-2}}{(\wp(t+\beta) - M)^2} & \lambda i a^{-1} & -\frac{\wp_t(t+\beta)}{2(\wp(t+\beta) - M)}
		\end{array}\right) 
		= X\tilde{\Phi}_+^{-1}.
	\end{align}
	\normalsize
	Let \(V = {\rm diag}(e^{-u/2},1,e^{u/2})\). By Proposition \ref{prop3.2} and the fact that \((\tilde{\Phi}_+^{-1}(x,\lambda))_x = (\tilde{\Phi}_+^{-1}(t,\lambda))_t|_{t=x}\), we have
	\begin{equation}
		(\tilde{\Phi}_+^{-1}(x,\lambda)V^{-1}(x)\phi_+(x,\lambda))_x = 0 \nonumber
	\end{equation}
	for all \(x \in \mathbb{C} \backslash \tilde{\Lambda}\). Since \(\tilde{\Phi}_+^{-1}(0,\lambda) = I_3, \phi_+(0,\lambda) = I_3\) and \(u(0) = 0\), it follows that \(\tilde{\Phi}_+(x,\lambda)^{-1} = \phi_+(x,\lambda)^{-1}V(x)\) for all \(x \in \mathbb{R} \backslash \tilde{\Lambda}\). Consequently, by (\ref{A}), \(\tilde{\Phi}_+^{-1}\) satisfies (\ref{E}) on \(x \in \mathbb{R} \backslash \tilde{\Lambda}\). Since both sides of (\ref{E}) are meromorphic on \(\mathbb{C}\), the identity theorem for holomorphic functions implies that (\ref{E}) holds on \(\mathbb{C} \backslash \tilde{\Lambda}\).\vskip\baselineskip
	
	From (\ref{E}), we obtain
	\begin{align}
		\tilde{F}_t &= \phi X
		\tilde{\Phi}_+^{-1} + \phi(\tilde{\Phi}_+^{-1})_t 
		= \tilde{F}\left(\begin{array}{ccc}
			-\frac{\wp_t}{2(\wp - M)} & \lambda^{-1} \frac{ia(\wp-M)}{2} & -\lambda ia^2 \\
			-\lambda ia^{-1} & 0 & \lambda^{-1} \frac{ia(\wp-M)}{2} \\
			\frac{4\lambda^{-1}ia^{-2}}{(\wp - M)^2} & -\lambda ia^{-1} & \frac{\wp_t}{2(\wp - M)}
		\end{array}\right). \nonumber
	\end{align}
	Since \(\overline{\wp({\bar t})} = \wp(t)\), we have
	\begin{align}
		&\Delta\overline{(\tilde{F}^{-1}\tilde{F}_t)({\bar t},\lambda)}\Delta = \left(\tilde{F}(t,\lambda)g(t)\right)^{-1}\left(\tilde{F}(t,\lambda)g(t)\right)_t. \nonumber
	\end{align}
	\normalsize
	Hence, from \(\tilde{F}(0,\lambda)g(0) = I_3\) we obtain \(\Delta\overline{\tilde{F}({\bar t},\lambda)}\Delta = \tilde{F}(t,\lambda)g(t)\).
\end{proof}\vskip\baselineskip

Hence, we obtain the Iwasawa factorization of \(\phi\) on \(\mathbb{C}\) away from countably many lines.
\begin{theorem}\label{thm3.4}
	Let \(M = \frac{1}{3}(m^2 - a^4 - 2a^{-2})\) and \(\tilde{\Phi}_+^{-1}\) a solution of (\ref{A'}) and define \(\mathcal{E}(x) = {\rm diag}(\varepsilon(x),1,\varepsilon(x))\), where
	\begin{equation}
		\varepsilon(x) = \left\{
		\begin{array}{cc}
			1 & {\rm if}\ \wp(x+\beta) - M > 0,\\
			-1 & {\rm if}\ \wp(x+\beta) - M < 0
		\end{array},\ \ \ \ \ x \in \mathbb{R}.\nonumber
		\right.
	\end{equation}
	Then \(\phi(t,\lambda)\) admits an Iwasawa factorization \(\phi = F\phi_+\) on a simply-connected subspace of
	\begin{equation}
		\mathbb{C} \backslash \{t = x+iy \in \mathbb{C}\ | \ x \in \tilde{\Lambda},\ y \in \mathbb{R}\}, \nonumber
	\end{equation}
	where \(F = F(t,\overline{t},\lambda) \in (\Lambda G)_{\sigma}\) and \(\phi_+ = \phi_+(t,\overline{t},\lambda) \in (\Lambda^+_B {\rm SL}_3 \mathbb{C})_{\sigma}\) are given by
	\begin{align}
		&F(t,{\bar t},\lambda) = \phi(iy,\lambda)\tilde{F}(x,\lambda)\mathcal{E}(x)g(x)^{\frac{1}{2}} \in (\Lambda G)_{\sigma}
		, \nonumber\\ 
		&\phi_+(t,{\bar t},\lambda) = g(x)^{-\frac{1}{2}}\mathcal{E}(x)\tilde{\Phi}_+(x,\lambda) 
		\in (\Lambda^+_B {\rm SL}_3 \mathbb{C})_{\sigma}
		. \nonumber
	\end{align}
	\normalsize
\end{theorem}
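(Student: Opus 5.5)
The plan is to verify three things directly: the product is \(\phi\), \(F\) lies in \((\Lambda G)_{\sigma}\), and \(\phi_+\) lies in \((\Lambda^+_B {\rm SL}_3 \mathbb{C})_{\sigma}\).

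\emph{The product.} Since \(\phi(t,\lambda)=\phi(iy,\lambda)\phi(x,\lambda)\) and \(\tilde F(x,\lambda)=\phi(x,\lambda)\tilde\Phi_+(x,\lambda)^{-1}\), the product telescopes:
\begin{equation*}
F\phi_+=\phi(iy)\tilde F(x)\mathcal{E}g^{\frac12}g^{-\frac12}\mathcal{E}\tilde\Phi_+(x)=\phi(iy)\phi(x)=\phi(t).
\end{equation*}
Here I use that \(\mathcal{E}^2=I_3\) and that \(\mathcal{E}\) commutes with the diagonal matrix \(g\). For \(x\in\mathbb{R}\setminus\tilde\Lambda\) we have \(\wp(x+\beta)\in\mathbb{R}\setminus\{M\}\), because \(\wp\) is real on the real axis. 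Since \(\beta\) need not be real, this should be checked by a shift of period: \(\overline{\wp(\beta)}=\wp(\beta)\) and \(\overline{\wp'(\beta)}=\wp'(\beta)\) force \(\bar\beta\equiv\beta\) modulo the lattice, and the lattice is real, so \(\wp(x+\beta)\) is real for real \(x\). I will take \(g^{1/2}={\rm diag}(s,1,s^{-1})\) with \(s=\sqrt{a^2|\wp-M|/2}>0\), understood up to the sign carried by \(\mathcal{E}\). Thus \(g(x)=\varepsilon(x)\,\mathcal{E}(x)\,(g^{1/2})^2\) in the sense that the diagonal entries of \(g\) are \(\varepsilon s^2\) and \(\varepsilon s^{-2}\).

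\emph{Reality of \(F\).} I need \(\Delta\overline{F}\Delta=F\) for \(\lambda\in S^1\). The factor \(\phi(iy,\lambda)\) is already known to lie in \((\Lambda G)_\sigma\). For real \(x\), Lemma \ref{lem3.2} gives \(\Delta\overline{\tilde F(x)}\Delta=\tilde F(x)g(x)\). Conjugating the remaining factor,
\begin{equation*}
\Delta\overline{\tilde F\mathcal{E}g^{\frac12}}\Delta=\tilde F g\,\Delta\mathcal{E}g^{\frac12}\Delta=\tilde F g\,\mathcal{E}\,{\rm diag}(s^{-1},1,s).
\end{equation*}
Writing \(g=\mathcal{E}\,{\rm diag}(s^2,1,s^{-2})\) when \(\varepsilon=-1\), and using \(\varepsilon^2=1\), this equals \(\tilde F\mathcal{E}\,{\rm diag}(s,1,s^{-1})=\tilde F\mathcal{E}g^{1/2}\). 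The middle entry is consistent because \(\mathcal{E}\) has a \(1\) there. The role of \(\mathcal{E}\) is precisely to absorb the sign of \(\wp-M\), and this sign check is the one delicate point of the proof; it is also why the \(\pm\) appears only in the outer diagonal entries.

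\emph{The twisted condition and the positive part.} The \(\sigma\)-twisted condition for \(F\) and \(\phi_+\) should follow from that of \(\phi\) and of \(\tilde\Phi_+\). Equation (\ref{A'}) has the symmetric shape of \(\xi\), so \(\sigma(\tilde\Phi_+(-\lambda))=\tilde\Phi_+(\lambda)\) follows by uniqueness of solutions to the ODE. The diagonal matrices \(\mathcal{E}\) and \(g^{\pm1/2}\) are \(\sigma\)-invariant, since \(\Delta D^{-1}\Delta=D\) for \(D={\rm diag}(d,1,d^{-1})\) (and for \(\mathcal{E}\) because \(\varepsilon^{-1}=\varepsilon\)). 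The coefficient matrix of (\ref{A'}) is polynomial in \(\lambda\), so \(\tilde\Phi_+\) and its inverse extend holomorphically to the disk. At \(\lambda=0\) that coefficient matrix is diagonal, so \(\tilde\Phi_+|_{\lambda=0}\) is diagonal of the form \({\rm diag}(c,1,c^{-1})\). Consequently \(\phi_+(0)=g^{-1/2}\mathcal{E}\tilde\Phi_+(0)\) is a real positive diagonal matrix of type \(\exp{\rm diag}(a,0,-a)\), hence lies in \(B\).

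\emph{Domain of definition.} Positivity needs care. On the component of \(\mathbb{R}\setminus\tilde\Lambda\) containing \(0\), the comparison \(\tilde\Phi_+^{-1}=\phi_+^{-1}V\) from the proof of Lemma \ref{lem3.2} yields \(c=e^{u/2}>0\), consistent with \(g=\mathcal{E}\,{\rm diag}(e^u,1,e^{-u})\). On other components, \(\tilde\Phi_+\) is obtained by analytic continuation through the poles in the complex \(t\)-plane, so \(c\) may change sign. Where it does, \(\mathcal{E}\) compensates: the key relation is \(c^2=\varepsilon s^2\), i.e. \(c^2\) equals the first entry of \(g\), which follows by integrating \(c'/c=\wp_t/(\wp-M)\). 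Hence \(\varepsilon c/s\) has modulus one and is real, and it is constant on each component; after fixing signs it is \(+1\). Finally, everything is defined for \(x\notin\tilde\Lambda\) with \(y\) arbitrary, which gives the stated domain; it is taken simply connected so that the square root and the continuation are single-valued.
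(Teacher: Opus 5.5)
\textbf{Gap: the reality of \(F\) fails when \(\varepsilon=-1\), and this cannot be repaired.}

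You follow the same route as the paper: the telescoping \(F\phi_+=\phi\), reality of \(F\) via Lemma \ref{lem3.2}, twisting of \(\tilde\Phi_+\) by uniqueness for (\ref{A'}), and the value at \(\lambda=0\). On strips where \(\wp(x+\beta)>M\) this is fine.

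The problem is the sign check you yourself call delicate, and it is miscomputed. Take \(g=\mathcal{E}\,{\rm diag}(s^2,1,s^{-2})\) and \(g^{1/2}={\rm diag}(s,1,s^{-1})\). Then \(g\,\mathcal{E}\,{\rm diag}(s^{-1},1,s)=\mathcal{E}^2\,{\rm diag}(s,1,s^{-1})={\rm diag}(s,1,s^{-1})\). Both factors of \(\mathcal{E}\) cancel, so \(\tau(F)=\phi(iy)\tilde F(x)g^{1/2}=F\mathcal{E}\), which is not \(F\) when \(\varepsilon=-1\).

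Choosing a different square root only moves the defect. With \(g^{1/2}={\rm diag}(is,-1,is^{-1})\), \(F\) does become real. But then \(\phi_+(x,0)=\mathcal{E}g^{1/2}={\rm diag}(-is,-1,-is^{-1})\), which is neither in \(B\) nor \(\sigma\)-fixed.

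A separate error in your last paragraph: solving (\ref{A'}) at \(\lambda=0\) gives exactly \(\tilde\Phi_+(t,0)=g(t)\), which is single-valued and meromorphic. So \(c=e^{u}\), not \(e^{u/2}\), on the first component, and in general \(c=\varepsilon s^2\) rather than \(c^2=\varepsilon s^2\). No sign is produced by analytic continuation, and the claim that \(\varepsilon c/s\) is real of modulus one is wrong.

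\textbf{Why no choice can work.} In fact no factorization exists at any real \(x\) with \(\wp(x+\beta)<M\).

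Suppose \(\phi(x)=F\phi_+\), and set \(h=F^{-1}\tilde F(x)=\phi_+\tilde\Phi_+(x)^{-1}\).

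Lemma \ref{lem3.2} at real \(x\) gives \(\tau(h)=h\,g(x)\) on \(S^1\). The left side extends holomorphically to \(|\lambda|>1\), including \(\infty\), and the right side to \(|\lambda|<1\). Hence \(h\equiv h_0\) is constant in \(\lambda\), and twisting gives \(\sigma(h_0)=h_0\).

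Write \(h_0=kb\) with \(k\in K\) and \(b\in B\), using the decomposition \(K^{\mathbb{C}}=KB\) of Section 2.2. The generators \(Z\) of \(B\) satisfy \(\Delta\overline{Z}\Delta=-Z\), so \(\tau(b)=b^{-1}\). Therefore \(g(x)=h_0^{-1}\tau(h_0)=b^{-2}\) is positive definite. This contradicts \(a^2(\wp(x+\beta)-M)/2<0\). The argument does not even use the normalization \(\phi_+(0)\in B\).

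\textbf{Consequence.} On strips where \(\varepsilon=-1\), \(\phi\) lies outside the big Iwasawa cell. The statement, and your proof, are valid only on strips where \(\mathcal{E}=I_3\).

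The paper's own proof has the same defect. It infers \(g(x)^{1/2}\in{\rm SL}_3\mathbb{R}\) from \(\varepsilon(x)(\wp(x+\beta)-M)>0\), and that inference fails precisely when \(\varepsilon=-1\).
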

\begin{proof}
	For \(x \in \mathbb{R} \backslash \tilde{\Lambda}\), we have \(\varepsilon(x)(\wp(x+\beta)-M) > 0\), and hence \(g(x)^{\frac{1}{2}} \in {\rm SL}_3\mathbb{R}\). Since \(\phi(iy) \in (\Lambda G)_{\sigma}\), Lemma \ref{lem3.2} implies that
	\begin{align}
		\tau(F(t,{\bar t},\lambda)) &= \tau(\phi(iy))\Delta\overline{\tilde{F}(x,\lambda)}\mathcal{E}(x)g(x)^{\frac{1}{2}}\Delta = \phi(iy)\tilde{F}(x,\lambda)g(x)\Delta\mathcal{E}(x)\Delta g(x)^{-\frac{1}{2}} \nonumber\\
		&= F(t,{\bar t},\lambda) \nonumber
	\end{align}
	for all \(\lambda \in S^1\). Moreover, from the twisted condition satisfied by the right-hand side of (\ref{A'}) and the condition \(\tilde{\Phi}_+^{-1}(0,\lambda) = I_3\), we obtain \(\sigma(\tilde{\Phi}_+^{-1}(x,-\lambda)) = \tilde{\Phi}_+^{-1}(x,\lambda)\). Furthermore, by solving (\ref{A'}) for \(\lambda = 0\), we obtain \(\tilde{\Phi}_+^{-1}(x,0) = g(x)^{-1}\). Hence, \(F \in (\Lambda G)_{\sigma}\) and \(\phi_+ \in (\Lambda^+_B {\rm SL}_3 \mathbb{C})_{\sigma}\).
\end{proof}\vskip\baselineskip

Next, we express \(F\) in terms of the Weierstrass functions \(\wp,\zeta\) and \(\sigma\),  and construct translationally equivariant hyperbolic affine surfaces via \(f = TF\begin{pmatrix}
	0 \\ 1 \\ 0
\end{pmatrix}\).

\section{Translationally equivariant affine spheres}
In this section, we solve the scalar equation corresponding to (\ref{A'}) in terms of the Weierstrass functions \(\wp, \zeta\), and \(\sigma\). We then fix \(\lambda=-i\) and construct the corresponding affine spheres using these Weierstrass functions. Finally, we classify translationally equivariant hyperbolic affine spheres in terms of the parameters \(a\) and \(m\) appearing in the Delaunay-type potential \(\xi\).

\subsection{The scalar equation}\label{sec4.1}
We now derive the scalar equation corresponding to (\ref{A'}). Suppose that \((Y^{(1)},Y^{(2)},Y^{(3)})\) satisfies
\begin{equation}
	\partial_t(Y^{(1)},Y^{(2)},Y^{(3)})
	= (Y^{(1)},Y^{(2)},Y^{(3)})\left(\begin{array}{ccc}
		-\frac{\wp_t}{\wp-M} & 0 & -2\lambda ia^2 \\
		-2\lambda ia^{-1} & 0 & 0 \\
		0 & -2\lambda ia^{-1} & \frac{\wp_t}{\wp-M}
	\end{array}\right). \nonumber
\end{equation}
Then
\begin{equation}
	\left\{
	\begin{array}{l}
		(Y^{(1)})_t = -\frac{\wp_t(t+\beta)}{\wp(t+\beta)-M}Y^{(1)} - 2\lambda ia^{-1}Y^{(2)},\\
		(Y^{(2)})_t = -2\lambda ia^{-1}Y^{(3)},\\
		(Y^{(3)})_t = \frac{\wp_t(t+\beta)}{\wp(t+\beta)-M}Y^{(3)} - 2\lambda ia^2Y^{(1)},
	\end{array}
	\right. \nonumber
\end{equation}
and
\begin{align}\label{B}
	&Y^{(3)} = \frac{ia}{2\lambda}(Y^{(2)})_t,\ \ \ \ \ Y^{(1)} = -\frac{1}{4\lambda^2a}\left\{(Y^{(2)})_{tt} - \frac{\wp_t(t+\beta)}{\wp(t+\beta) - M}(Y^{(2)})_t \right\}, \nonumber\\
	&(Y^{(2)})_{ttt} - 6\{\wp(t+\beta) + M\}(Y^{(2)})_t = 8\lambda^3iY^{(2)}. 
\end{align}\vskip\baselineskip

We seek a solution to (\ref{B}) of the form
\begin{equation}
	Y = \frac{\sigma(t + \beta - \alpha)}{\sigma(t + \beta)}e^{(\zeta(\alpha) + k)t}, \nonumber
\end{equation}
where \(\zeta, \sigma\) are the Weierstrass functions. We have
\begin{align}
	Y_t &= \{\zeta(t+\beta-\alpha) - \zeta(t+\beta) + \zeta(\alpha) + k\}Y \nonumber\\
	&= \left\{\frac{1}{2}\frac{\wp_t(t+\beta) + \wp_t(\alpha)}{\wp(t+\beta) - \wp(\alpha)} + k\right\}Y \nonumber\\
	&=: \tilde{\psi}Y, \nonumber\\
	Y_{tt} &= (\tilde{\psi}_t + \tilde{\psi}^2)Y. \nonumber
\end{align}

\begin{lem}\label{lem4.1}
	Set
	\begin{equation}
		\psi = \frac{1}{2}\frac{\wp_t(t+\beta) + \wp_t(\alpha)}{\wp(t+\beta) - \wp(\alpha)}. \nonumber
	\end{equation}
	Then
	\begin{equation}
		\psi_t + \psi^2 = 2\wp(t+\beta) + \wp(\alpha). \nonumber
	\end{equation}
\end{lem}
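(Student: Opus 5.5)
The natural route is to use the addition formula for $\zeta$ already displayed just before the lemma, namely
\begin{equation*}
\psi = \zeta(t+\beta-\alpha) - \zeta(t+\beta) + \zeta(\alpha).
\end{equation*}
Differentiating in $t$ and using $\zeta' = -\wp$ gives
\begin{equation*}
\psi_t = \wp(t+\beta) - \wp(t+\beta-\alpha).
\end{equation*}
So it suffices to prove
\begin{equation*}
\psi^2 = \wp(t+\beta) + \wp(\alpha) + \wp(t+\beta-\alpha).
\end{equation*}
Writing $u = t+\beta$ and $v = -\alpha$, this identity reads $\bigl(\zeta(u+v)-\zeta(u)-\zeta(v)\bigr)^2 = \wp(u)+\wp(v)+\wp(u+v)$, which is the classical Frobenius--Stickelberger identity. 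Once it holds, adding $\psi_t$ cancels the $\wp(t+\beta-\alpha)$ terms and leaves exactly $2\wp(t+\beta)+\wp(\alpha)$.

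To keep the argument self-contained I would prove the identity in its equivalent form via the formula quoted before the lemma,
\begin{equation*}
\frac{1}{4}\left(\frac{\wp'(u)-\wp'(v)}{\wp(u)-\wp(v)}\right)^2 = \wp(u)+\wp(v)+\wp(u+v),
\end{equation*}
applied with $v=-\alpha$, so that $\wp'(v) = -\wp_t(\alpha)$ and $\wp(v)=\wp(\alpha)$. This is the standard addition theorem for $\wp$. If I were to verify it rather than cite it, I would argue as follows. Both sides, viewed as functions of $u$ for fixed generic $v$, are elliptic. The only possible poles are at $u\in\Lambda$, at $u\equiv -v$, and at $u\equiv v$. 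At $u\equiv v$ the left side is actually regular, since numerator and denominator vanish to matching orders when $\wp'(v)\neq 0$. A Laurent expansion at $u=0$ and at $u=-v$ shows the principal parts agree. Hence the difference is a constant, and that constant is $0$ by comparing the constant terms at $u=0$.

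The main obstacle is only bookkeeping in the Laurent expansion at $u=0$. There
\begin{equation*}
\frac{\wp'(u)-\wp'(v)}{\wp(u)-\wp(v)} = -\frac{2}{u} + 2\wp(v)\,u + \wp'(v)\,u^2 + O(u^3)
\end{equation*}
must be carried to enough order that its square reproduces $u^{-2} + 2\wp(v) + O(u)$, matching $\wp(u)+\wp(v)+\wp(u+v)$. Alternatively, I would simply cite the addition theorem from a standard reference (e.g.\ Whittaker--Watson), since the lemma is a direct consequence of it together with $\zeta'=-\wp$.
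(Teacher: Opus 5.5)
Your argument is correct, but it takes a different route from the paper. The paper uses no addition theorem. It differentiates $\wp_t^2 = 4\wp^3 - 12M^2\wp + 8(M^3+2)$ to get $\wp_{tt} = 6\wp^2 - 6M^2$. Then, writing $\wp = \wp(t+\beta)$, it clears denominators and checks
\[
4(\wp-\wp(\alpha))^2(\psi_t+\psi^2) = 2\wp_{tt}(\wp-\wp(\alpha)) - \wp_t^2 + \wp_t(\alpha)^2 = 4(\wp-\wp(\alpha))^2\bigl(2\wp+\wp(\alpha)\bigr),
\]
using the cubic relation once at $t+\beta$ and once at $\alpha$. This is a short polynomial identity that needs only the differential equation, not double periodicity or Liouville's theorem. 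So it holds for any nonconstant solution of that equation, including the repeated-root case $M=-1$ (reachable with $m\neq 0$), where there is no genuine period lattice.

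Your splitting into $\psi_t = \wp(t+\beta)-\wp(t+\beta-\alpha)$ and $\psi^2 = \wp(t+\beta)+\wp(\alpha)+\wp(t+\beta-\alpha)$ is more conceptual. It explains why the terms cancel, and it identifies $\sigma(t+\beta-\alpha)\sigma(t+\beta)^{-1}e^{\zeta(\alpha)t}$ as Hermite's solution of the Lam\'e equation $Y_{tt} = (2\wp(t+\beta)+\wp(\alpha))Y$. It also fits with the $\zeta$-formula the paper already uses for $Y_t$. The price is that it rests on the addition theorem for $\wp$. In fact, given the $\zeta$-formula, the lemma is equivalent to that theorem, so the paper's computation can be read as an elementary proof of the very identity you cite.

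If you verify the addition theorem yourself rather than cite it, correct the expansion at $u=0$. It is
\[
\frac{\wp'(u)-\wp'(v)}{\wp(u)-\wp(v)} = -\frac{2}{u} - 2\wp(v)\,u - \wp'(v)\,u^2 + O(u^3),
\]
with minus signs on the last two terms. With your signs, one quarter of the square has constant term $-2\wp(v)$, which does not match $\wp(u)+\wp(v)+\wp(u+v) = u^{-2} + 2\wp(v) + \wp'(v)\,u + O(u^2)$. With the correct signs it matches, and the rest of your Liouville argument goes through: regularity at $u\equiv v$ for generic $v$, and matching pure double poles at $u\equiv -v$.
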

\begin{proof}
	From \((\wp_t)^2 = 4\wp^3 - 12M^2\wp + 8(M^3+2)\), it follows that
	\begin{equation}
		\wp_t(\wp_{tt} -6\wp^2 + 6M^2) = 0. \nonumber
	\end{equation}
	We then compute
	\begin{align}
		&4(\wp - \wp(\alpha))^2(\psi_t + \psi^2) 
		= 2\wp_{tt}(\wp - \wp(\alpha)) - 2(\wp_t + \wp_t(\alpha))\wp_t + (\wp_t + \wp_t(\alpha))^2 \nonumber\\
		&= 2(6\wp^2 - 6M^2)(\wp - \wp(\alpha)) -(4\wp^3 - 12M^2\wp + 8(M^3+2)) + (\wp_t(\alpha))^2 \nonumber\\
		&= 8\wp^3 -12\wp(\alpha)\wp^2 + 4(\wp(\alpha))^3 \nonumber\\
		&= 4(\wp - \wp(\alpha))^2(2\wp + \wp(\alpha)). \nonumber
	\end{align}
	Consequently, we obtain \(\psi_t + \psi^2 = 2\wp + \wp(\alpha)\).
\end{proof}\vskip\baselineskip

\begin{prop}
	We have
	\begin{equation}
		Y_{ttt} - \{6\wp - 3\wp(\alpha) + 3k^2\}Y_t = \{- 2k^3 + 6\wp(\alpha)k - 2\wp_t(\alpha)\}Y. \nonumber
	\end{equation}
	Moreover, if \(\alpha, k\) satisfy
	\begin{align}\label{C}
		&-3\wp(\alpha) + 3k^2 = 6M, \nonumber\\
		&-2k^3 + 6\wp(\alpha)k - 2\wp_t(\alpha) = 8\lambda^3i 
	\end{align}
	then \(Y\) is a solution to (\ref{B}).
\end{prop}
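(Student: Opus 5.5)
We write \(\tilde\psi = \psi + k\), so that \(Y_t = \tilde\psi Y\). Then \(Y_{tt} = (\tilde\psi_t + \tilde\psi^2)Y\) and
\begin{equation*}
Y_{ttt} = \bigl(\tilde\psi_{tt} + 3\tilde\psi\tilde\psi_t + \tilde\psi^3\bigr)Y .
\end{equation*}
The plan is to express everything through Lemma \ref{lem4.1}. Since \(\tilde\psi_t = \psi_t\), the lemma gives
\begin{equation*}
\tilde\psi_t + \tilde\psi^2 = \psi_t + \psi^2 + 2k\psi + k^2 = 2\wp + \wp(\alpha) + 2k\psi + k^2 .
\end{equation*}
Differentiating this identity gives \(\tilde\psi_{tt} + 2\tilde\psi\tilde\psi_t = 2\wp_t + 2k\psi_t\). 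Hence
\begin{equation*}
Y_{ttt} = \Bigl[2\wp_t + 2k\psi_t + \tilde\psi\bigl(2\wp + \wp(\alpha) + 2k\psi + k^2\bigr)\Bigr]Y .
\end{equation*}
We subtract \(\{6\wp - 3\wp(\alpha) + 3k^2\}\tilde\psi Y\) from this. The coefficient of \(Y\) then becomes a polynomial expression in \(\psi\), \(\psi_t\), \(\wp\), \(\wp_t\) and \(k\).

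Next we expand in powers of \(k\). The \(k^3\) terms are \(k^3 + 2k^3 - 3k^3 = 0\) from the product terms, plus \(-2k^3\) after using \(\psi_t = 2\wp + \wp(\alpha) - \psi^2\). In this step the quadratic terms in \(\psi\) should cancel against \(k\)-terms. The \(k\)-linear part should reduce to \(6\wp(\alpha)k\), since all \(\wp\) and \(\psi^2\) contributions cancel after applying the lemma. The \(k^0\) part is
\begin{equation*}
2\wp_t + \psi\bigl(2\wp + \wp(\alpha)\bigr) - \psi\bigl(6\wp - 3\wp(\alpha)\bigr) = 2\wp_t - 4\psi\bigl(\wp - \wp(\alpha)\bigr).
\end{equation*}
By the definition of \(\psi\), we have \(2\psi(\wp - \wp(\alpha)) = \wp_t + \wp_t(\alpha)\). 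Therefore the \(k^0\) part equals \(-2\wp_t(\alpha)\), a constant, as required. The bookkeeping of the \(k\)-linear and \(k^2\) terms is the only place where care is needed. I expect it to be the main (purely computational) obstacle; I would organize it by grouping terms by powers of \(k\) before substituting the lemma.

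Finally, suppose (\ref{C}) holds. The first condition turns \(6\wp - 3\wp(\alpha) + 3k^2\) into \(6\wp + 6M\), which is exactly the coefficient appearing in (\ref{B}). The second condition turns the right-hand constant into \(8\lambda^3 i\). Hence \(Y\) solves (\ref{B}) with \(\wp = \wp(t+\beta)\). Throughout, \(\wp\) and \(\wp_t\) are evaluated at \(t+\beta\), and differentiation in \(t\) commutes with this shift.
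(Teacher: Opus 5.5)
Your proposal is correct and is essentially the paper's computation: both expand $Y_{ttt}=(\tilde\psi_{tt}+3\tilde\psi\tilde\psi_t+\tilde\psi^3)Y$ and reduce it using Lemma~\ref{lem4.1} together with $2\psi(\wp-\wp(\alpha))=\wp_t+\wp_t(\alpha)$; the only difference is that you differentiate $\tilde\psi_t+\tilde\psi^2=2\wp+\wp(\alpha)+2k\psi+k^2$, whereas the paper first computes $\psi_{tt}=2\psi^3-6\wp(\alpha)\psi-2\wp_t(\alpha)$. The bookkeeping you left as ``should'' does close, but your $k^3$ tally is misstated: the $k^3$ part is simply $k\cdot k^2-3k^2\cdot k=-2k^3$ (the substitution for $\psi_t$ contributes no $k^3$), the $k^2$ part $2k^2\psi+k^2\psi-3k^2\psi$ vanishes, and the $k$-linear part is $2k(\psi_t+\psi^2)-4k(\wp-\wp(\alpha))=6k\wp(\alpha)$.
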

\begin{proof}
	We have
	\begin{equation}
		Y_{ttt} = (\tilde{\psi}_{tt} + 3\tilde{\psi}\tilde{\psi}_t + \tilde{\psi}^3)Y. \nonumber
	\end{equation}
	From Lemma \ref{lem4.1}, it follows that
	\begin{align}
		\psi_t &= -\psi^2 + 2\wp + \wp(\alpha), \nonumber\\
		\psi_{tt} &= -2\psi\psi_t + 2\wp_t = 2\psi^3 - 2\psi(2\wp + \wp(\alpha)) + 4\psi(\wp - \wp(\alpha)) - 2\wp_t(\alpha) \nonumber\\
		&= 2\psi^3 - 6\wp(\alpha)\psi - 2\wp_t(\alpha). \nonumber
	\end{align}
	A straightforward computation using the above identities yields
	\begin{align}
		&\tilde{\psi}_{tt} + 3\tilde{\psi}\tilde{\psi}_t + \tilde{\psi}^3 
		= \psi_{tt} + 3\psi\psi_t + 3k\psi_t + (\psi + k)^3\nonumber\\
		&= \text{\footnotesize\(2\psi^3 - 6\wp(\alpha)\psi - 2\wp_t(\alpha) - 3\psi^3 + 3\psi(2\wp + \wp(\alpha)) - 3k\psi^2 + 3k(2\wp + \wp(\alpha)) + (\psi + k)^3\)} \nonumber\\
		&= (6\wp - 3\wp(\alpha) + 3k^2)\psi + 6k\wp - 2\wp_t(\alpha) + 3\wp(\alpha)k + k^3 \nonumber\\
		&= (6\wp - 3\wp(\alpha) + 3k^2)\tilde{\psi} - 2k^3 + 6\wp(\alpha)k - 2\wp_t(\alpha). \nonumber
	\end{align}
	Finally, from \(Y_t = \tilde{\psi}Y\), the results stated above follow.
\end{proof}\vskip\baselineskip

\begin{lem}
	If \(k, \alpha\) satisfy (\ref{C}), then
	\begin{equation}
		\wp(\alpha) = k^2 - 2M,\ \ \ \ \ \wp_t(\alpha) =  2k^3 - 6Mk - 4\lambda^3 i \nonumber
	\end{equation}
	and
	\begin{equation}\label{D}
		k^3 - 3Mk = i(\lambda^3 + \lambda^{-3}).
	\end{equation}
\end{lem}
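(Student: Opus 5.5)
The plan is to solve the two equations in (\ref{C}) directly for \(\wp(\alpha)\) and \(\wp_t(\alpha)\). Then we substitute into the Weierstrass differential equation that \(\wp\) satisfies, in the normalization used in Proposition \ref{prop3.2} and Lemma \ref{lem4.1}:
\begin{equation*}
(\wp_t)^2 = 4\wp^3 - 12M^2\wp + 8(M^3+2).
\end{equation*}

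First, the first equation of (\ref{C}) gives \(3\wp(\alpha) = 3k^2 - 6M\), so \(\wp(\alpha) = k^2 - 2M\). Inserting this into the second equation of (\ref{C}) yields
\begin{equation*}
2\wp_t(\alpha) = -2k^3 + 6k(k^2-2M) - 8\lambda^3 i = 4k^3 - 12Mk - 8\lambda^3 i,
\end{equation*}
that is, \(\wp_t(\alpha) = 2k^3 - 6Mk - 4\lambda^3 i\). This establishes the first two identities.

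For (\ref{D}), we evaluate the differential equation of \(\wp\) at \(t=\alpha\). We set \(s = k^3 - 3Mk\), so that \(\wp_t(\alpha) = 2s - 4\lambda^3 i\). The left-hand side is
\begin{equation*}
(2s - 4\lambda^3 i)^2 = 4s^2 - 16\lambda^3 i\, s - 16\lambda^6 .
\end{equation*}
For the right-hand side, we expand \(4(k^2-2M)^3 - 12M^2(k^2-2M) + 8M^3 + 16\). The \(k\)-dependent terms are \(4k^6 - 24Mk^4 + 48M^2k^2 - 12M^2k^2 = 4k^6 - 24Mk^4 + 36M^2k^2\), which equals \(4s^2\). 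The constant terms are \(-32M^3 + 24M^3 + 8M^3 + 16 = 16\).

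The \(4s^2\) terms cancel, leaving
\begin{equation*}
-16\lambda^3 i\, s - 16\lambda^6 = 16, \qquad\text{i.e.}\qquad -\lambda^3 i\, s = 1 + \lambda^6 .
\end{equation*}
Multiplying by \(i\lambda^{-3}\) gives \(s = i(\lambda^3 + \lambda^{-3})\), which is (\ref{D}).

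The only delicate point is keeping the normalization of the invariants, \(g_2 = 12M^2\) and \(g_3 = -8(M^3+2)\), consistent with Proposition \ref{prop3.2}. The cancellation of the \(s^2\) terms is the step to double-check.
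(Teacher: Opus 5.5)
Your proof is correct and follows the paper's argument: you solve (\ref{C}) for \(\wp(\alpha)\) and \(\wp_t(\alpha)\), substitute them into \((\wp_t)^2 = 4\wp^3 - 12M^2\wp + 8(M^3+2)\), and the difference collapses to \(-16\lambda^3 i\,(k^3-3Mk-i\lambda^3-i\lambda^{-3})=0\). Your expansion checks out, including the \(4s^2\) cancellation and the constant term \(16\), and dividing by \(\lambda^3\) is harmless because \(\lambda\neq 0\).
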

\begin{proof}
	From \((\wp_t(\alpha))^2 = 4\wp(\alpha)^3 - 12M^2\wp(\alpha) + 8(M^3+2)\), we obtain
	\begin{align}
		0 &= (\wp_t(\alpha))^2 - 4\wp(\alpha)^3 + 12M^2\wp(\alpha) - 8(M^3+2) \nonumber\\
		&= (2k^3 - 6Mk - 4\lambda^3 i)^2 - 4(k^2 - 2M)^3 + 12M^2(k^2 - 2M) - 8(M^3+2) \nonumber\\
		&= -16\lambda^3 i(k^3 - 3Mk - i\lambda^3 -i\lambda^{-3}), \nonumber
	\end{align}
	and hence, \(k^3 - 3Mk = i(\lambda^3 + \lambda^{-3})\).
\end{proof}\vskip\baselineskip

Fix \(\lambda\) and solve (\ref{D}). Then the fundamental solutions of (\ref{B}) can be expressed in terms of Weierstrass functions. Consequently, so can the corresponding affine sphere.

\subsection{Hyperbolic affine spheres}
For \(t = x+iy \in \mathbb{C}\), a hyperbolic affine sphere \(f\) is given by
\begin{align}
	f(t,{\bar t},\lambda) &= TF\begin{pmatrix}
		0 \\ 1 \\ 0
	\end{pmatrix} = T\phi(t,\lambda) \tilde{\Phi}_+(x,\lambda)^{-1}\begin{pmatrix}
		0 \\ 1 \\ 0
	\end{pmatrix}. \nonumber
\end{align}

From now on, we fix \(\lambda = -i\) and \(k_1 = -k_3 = \sqrt{3M} =:k, k_2 = 0\).

\subsubsection{The case \(M \neq 0\)}
Assume that \(M = \frac{1}{3}(m^2 - a^4 - 2a^{-2}) \neq 0\).

\begin{prop}\label{prop4.2}
	Let \(\alpha_1,\alpha_2,\alpha_3\) satisfy
	\begin{align}
		&\wp(\beta) = 2a^{-2} + M = \frac{1}{3}(m^2 + 4a^{-2} - a^4),\ \ \ \ \ \wp_t(\beta) =  4a^{-2}m, \nonumber\\
		&\wp(\alpha_j) = k_j^2 - 2M,\ \ \ \ \ \wp_t(\alpha_j) = 4\ \ \ (j = 1,2,3), \nonumber
	\end{align}
	and
	\begin{equation}
		Y_j^{(2)} = \frac{\sigma(t + \beta - \alpha_j)}{\sigma(t + \beta)}e^{(\zeta(\alpha_j) + k_j)t}\ \ \ \ \ (j=1,2,3). \nonumber
	\end{equation}
	Set \(D = \frac{1}{\sigma(\beta)}{\rm diag}(\sigma(\beta-\alpha_1), \sigma(\beta-\alpha_2), \sigma(\beta-\alpha_3))\) and
	\begin{align}
		K_1 = \left(\begin{array}{ccc}
			\frac{a(k-m-a^2)}{2} & 1 & -\frac{a(k+m+a^2)}{2} \\
			-\frac{1}{a(m-a^2)} & 1 & -\frac{1}{a(m-a^2)} \\
			-\frac{a(k+m+a^2)}{2} & 1 & \frac{a(k-m-a^2)}{2}
		\end{array}\right). \nonumber
	\end{align}
	Then,
	\begin{equation}\label{F}
		\tilde{\Phi}_+^{-1}(t,-i) = K_1^{-1}D^{-1}\left(\begin{array}{ccc}
			\frac{1}{4a}\{(Y_1^{(2)})_{tt} - \frac{\wp_t(t+\beta)}{\wp(t+\beta)-M}(Y_1^{(2)})_t\} & Y_1^{(2)} & -\frac{a}{2}(Y_1^{(2)})_t \\
			\frac{1}{4a}\{(Y_2^{(2)})_{tt} - \frac{\wp_t(t+\beta)}{\wp(t+\beta)-M}(Y_2^{(2)})_t\} & Y_2^{(2)} & -\frac{a}{2}(Y_2^{(2)})_t \\
			\frac{1}{4a}\{(Y_3^{(2)})_{tt} - \frac{\wp_t(t+\beta)}{\wp(t+\beta)-M}(Y_3^{(2)})_t\} & Y_3^{(2)} & -\frac{a}{2}(Y_3^{(2)})_t \\
		\end{array}\right)
	\end{equation}
	is a solution to (\ref{A'}) on \(\mathbb{C} \backslash \tilde{\Lambda}\).
\end{prop}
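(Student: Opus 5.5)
The plan is to verify the three defining properties of a solution of (\ref{A'}) separately: each row of the big matrix in (\ref{F}) solves the row system, left multiplication by the constant matrix $K_1^{-1}D^{-1}$ preserves this, and the initial condition $\tilde{\Phi}_+^{-1}(0,-i)=I_3$ holds. Call the big matrix $W(t)$; it suffices to show that $W(0)=DK_1$.

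\emph{Step 1: the scalar data at $\lambda=-i$.} We have $\lambda^3=i$ and $\lambda^{-3}=-i$, so the right-hand side of (\ref{D}) vanishes, and $k^3-3Mk=0$ has the roots $k_1=\sqrt{3M}$, $k_2=0$, $k_3=-\sqrt{3M}$. By the lemma following (\ref{C}),
\[
\wp(\alpha_j)=k_j^2-2M,\qquad \wp_t(\alpha_j)=2k_j^3-6Mk_j-4\lambda^3 i=4 .
\]
Hence the $\alpha_j$ in the statement are exactly the parameters for which the pair $(\alpha_j,k_j)$ satisfies (\ref{C}). By the preceding proposition, each $Y_j^{(2)}$ then solves the third-order equation in (\ref{B}) with $\lambda=-i$.

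\emph{Step 2: rows solve the row system.} For $\lambda=-i$ we have $\frac{ia}{2\lambda}=-\frac a2$ and $-\frac{1}{4\lambda^2a}=\frac1{4a}$, so row $j$ of $W$ is exactly $(Y^{(1)},Y^{(2)},Y^{(3)})$ built from $Y^{(2)}=Y^{(2)}_j$ by the formulas in (\ref{B}). I will check the converse of the derivation of (\ref{B}).
\begin{itemize}
\item The equation $(Y^{(2)})_t=-2\lambda i a^{-1}Y^{(3)}$ holds by the definition of $Y^{(3)}$.
\item The $Y^{(3)}$-equation is the definition of $Y^{(1)}$, rearranged.
\item Differentiating $Y^{(1)}$ and using the identity $\partial_t\frac{\wp_t}{\wp-M}$, the $Y^{(1)}$-equation reduces to the third-order equation in (\ref{B}). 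This uses $\wp_{tt}=6\wp^2-6M^2$ from the proof of Lemma \ref{lem4.1}.
\end{itemize}
So every row solves the linear system, and so does any constant left multiple of $W$. Everything is meromorphic, with poles only where $\sigma(t+\beta)=0$ or $\wp(t+\beta)=M$, so this holds on $\mathbb{C}\backslash\tilde\Lambda$.

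\emph{Step 3: the initial value (the main computation).} At $t=0$ we have $Y_j^{(2)}(0)=\sigma(\beta-\alpha_j)/\sigma(\beta)$, so the middle column of $D^{-1}W(0)$ is $(1,1,1)^t$. This matches the middle column of $K_1$. For the other columns I use
\[
\frac{Y_t}{Y}=\psi_j+k_j,\qquad \frac{Y_{tt}}{Y}=(\psi_j+k_j)^2+\psi_{j,t}=2\wp+\wp(\alpha_j)+2k_j\psi_j+k_j^2
\]
from Lemma \ref{lem4.1}. At $t=0$,
\[
\psi_j(0)=\frac{2(a^{-2}m+1)}{2a^{-2}+3M-k_j^2},\qquad \wp(\beta)-M=2a^{-2}.
\]
For $k_2=0$ the denominator equals $m^2-a^4$. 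This gives the third entry $-\frac a2\cdot\frac{2(m+a^2)}{a^2(m-a^2)(m+a^2)}=-\frac1{a(m-a^2)}$, as in $K_1$, and similarly for the first entry. For $k_{1,3}=\pm k$ the denominator becomes $m^2-a^4-2a^{-2}\cdot 0\ldots$, i.e. $2a^{-2}+3M-3M=2a^{-2}$. So $\psi_j(0)=a^{2}(a^{-2}m+1)=m+a^2$, and the entries should reduce to $\pm\frac a2(k\mp(m+a^2))$ after simplification with $3M=k^2$.

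This bookkeeping, especially the first column, which involves $Y_{tt}$ and the factor $\wp_t(\beta)/(\wp(\beta)-M)=2m$, is the step I expect to be most error-prone. I would organize it by writing every entry as a polynomial in $k_j$, $m$, $a$ before substituting.

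Finally, I need $D$ and $K_1$ to be invertible. This requires $\sigma(\beta-\alpha_j)\neq0$ and $\det K_1\neq0$, which I expect amounts to the genericity assumptions $M\neq0$ and $m\neq \pm a^2$. Under these assumptions, $K_1^{-1}D^{-1}W(0)=I_3$, and the proposition follows.
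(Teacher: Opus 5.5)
Your proposal is correct and follows the paper's proof. The rows built from $Y_j^{(2)}$ via (\ref{B}) solve the row system of (\ref{A'}); you usefully check this converse direction explicitly, via $\partial_t\frac{\wp_t}{\wp-M}+\bigl(\frac{\wp_t}{\wp-M}\bigr)^2=\frac{\wp_{tt}}{\wp-M}=6(\wp+M)$. The value at $t=0$ is $DK_1$, and uniqueness of the initial value problem concludes.

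The first-column entries you left open do close. Since $\frac{Y_{tt}}{Y}(0)=4a^{-2}+2k_j^2+2k_j(m+a^2)$ and $\frac{\wp_t(\beta)}{\wp(\beta)-M}=2m$, they equal $\frac a2(k_j-m-a^2)$ for $k_j=\pm k$ and $-\frac{1}{a(m-a^2)}$ for $k_2=0$. Also, $D$ degenerates only at $m=a^2$ (where $\beta\equiv\alpha_2$); the value $m=-a^2$ is merely a removable $0/0$ in your expression for $\psi_2(0)$, so it need not be excluded.
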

\begin{proof}
	Since \(Y_1^{(2)}, Y_2^{(2)}, Y_3^{(2)}\) are solutions of (\ref{B}), the right hand side of (\ref{F}) satisfies (\ref{A'}). Using
	\begin{align}
		Y_j^{(2)}(0) &= \frac{\sigma(\beta - \alpha_j)}{\sigma(\beta)}, \nonumber\\
		(Y_j^{(2)})_t(0) &= \frac{\sigma(\beta - \alpha_j)}{\sigma(\beta)}\tilde{\psi}(0) =
		\frac{2(m + k_j + a^2)}{a^2(m^2 - a^4 - k_j^2)} \cdot \frac{\sigma(\beta - \alpha_j)}{\sigma(\beta)}, \nonumber\\
		(Y_j^{(2)})_{tt}(0) &= \frac{\sigma(\beta - \alpha_j)}{\sigma(\beta)}\{\tilde{\psi}'(0) + \tilde{\psi}(0)^2\} =
		\frac{4(m^2 - a^4 + (m + a^2)k_j)}{a^2(m^2 - a^4 - k_j^2)} \cdot \frac{\sigma(\beta - \alpha_j)}{\sigma(\beta)}. \nonumber
	\end{align}
	we obtain
	\begin{align}
		&\left(\begin{array}{ccc}
			\frac{1}{4a}\{(Y_1^{(2)})_{tt}(0) - \frac{\wp_t(\beta)}{\wp(\beta)-M}(Y_1^{(2)})_t(0)\} & Y_1^{(2)}(0) & -\frac{a}{2}(Y_1^{(2)})_t(0) \\
			\frac{1}{4a}\{(Y_2^{(2)})_{tt}(0) - \frac{\wp_t(\beta)}{\wp(\beta)-M}(Y_2^{(2)})_t(0)\} & Y_2^{(2)}(0) & -\frac{a}{2}(Y_2^{(2)})_t(0) \\
			\frac{1}{4a}\{(Y_3^{(2)})_{tt}(0) - \frac{\wp_t(\beta)}{\wp(\beta)-M}(Y_3^{(2)})_t(0)\} & Y_3^{(2)}(0) & -\frac{a}{2}(Y_3^{(2)})_t(0) \\
		\end{array}\right) = DK_1, \nonumber
	\end{align}
	and hence the right-hand side of \((\ref{F})\) evaluated at \(t=0\) is equal to \(I_3\). Therefore, the result follows from the uniqueness of the corresponding initial value problem.
\end{proof}\vskip\baselineskip

\begin{lem}
	Let \(\alpha\) satisfy \(\wp(\alpha) = M,\ \wp_t(\alpha) = 4\) and set \(\alpha_1 = \alpha_3 = \alpha\) and \(\alpha_2 = -2\alpha\). Then \(\alpha_1,\alpha_2\) and \(\alpha_3\) satisfy the condition in Proposition \ref{prop4.2}.
\end{lem}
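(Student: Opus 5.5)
The plan is to check directly the two conditions of Proposition \ref{prop4.2}, namely \(\wp(\alpha_j) = k_j^2 - 2M\) and \(\wp_t(\alpha_j) = 4\), for each \(j\), with \(k_1 = -k_3 = k = \sqrt{3M}\) and \(k_2 = 0\). Throughout I will use the cubic relation \((\wp_t)^2 = 4\wp^3 - 12M^2\wp + 8(M^3+2)\) from the proof of Proposition \ref{prop3.2}. I will also use its derivative \(\wp_{tt} = 6\wp^2 - 6M^2\), which was obtained in the proof of Lemma \ref{lem4.1}.

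\emph{The cases \(j=1,3\).} Here \(k_1^2 = k_3^2 = 3M\), so the required value is \(\wp(\alpha_j) = 3M - 2M = M\). Together with \(\wp_t(\alpha_j) = 4\), this is exactly the hypothesis on \(\alpha\), so nothing needs to be done.

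\emph{The case \(j=2\).} Since \(k_2 = 0\), I need \(\wp(-2\alpha) = -2M\) and \(\wp_t(-2\alpha) = 4\). I will argue geometrically on the cubic curve \(y^2 = 4x^3 - 12M^2x + 8(M^3+2)\), using the addition law. Because \(\alpha + \alpha + (-2\alpha) = 0\), the point \((\wp(-2\alpha), \wp_t(-2\alpha))\) is the third intersection of the curve with the tangent line at \((\wp(\alpha), \wp_t(\alpha)) = (M, 4)\).

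The slope of this tangent is
\[
\frac{\wp_{tt}(\alpha)}{\wp_t(\alpha)} = \frac{6M^2 - 6M^2}{4} = 0,
\]
so the tangent is the horizontal line \(y = 4\). Substituting \(y = 4\) into the cubic gives
\[
4x^3 - 12M^2x + 8M^3 = 0, \qquad\text{that is,}\qquad 4(x - M)^2(x + 2M) = 0.
\]
The double root \(x = M\) is the tangency point, and the remaining root gives \(\wp(-2\alpha) = -2M\). Since the third point lies on the line \(y = 4\), we also get \(\wp_t(-2\alpha) = 4\). This matches the required values.

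As a cross-check, I would confirm the same result with the duplication formula
\[
\wp(2\alpha) = \frac{1}{4}\left(\frac{\wp_{tt}(\alpha)}{\wp_t(\alpha)}\right)^2 - 2\wp(\alpha),
\]
which gives \(\wp(2\alpha) = 0 - 2M = -2M\), hence \(\wp(-2\alpha) = -2M\) by evenness of \(\wp\). The cubic then forces
\[
\wp_t(\pm 2\alpha)^2 = -32M^3 + 24M^3 + 8M^3 + 16 = 16.
\]

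\emph{Main obstacle.} The only delicate point is the sign of \(\wp_t(-2\alpha)\), since the cubic alone only gives \(\pm 4\). The collinearity argument resolves this: the third intersection point lies on \(y = 4\), not \(y = -4\), so \(\wp_t(-2\alpha) = 4\) (equivalently \(\wp_t(2\alpha) = -4\) by oddness of \(\wp_t\)).

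If one wants to avoid the geometric addition law, an alternative is the analytic addition formula
\[
\wp(u+v) = \frac{1}{4}\left(\frac{\wp_t(u) - \wp_t(v)}{\wp(u) - \wp(v)}\right)^2 - \wp(u) - \wp(v),
\]
applied with a limiting argument. I would present the tangent-line argument, since it settles both \(\wp(-2\alpha)\) and \(\wp_t(-2\alpha)\) at once.
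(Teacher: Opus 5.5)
Your proof is correct, and it uses the same key fact as the paper, namely \(\wp_{tt}(\alpha) = 6M^2 - 6M^2 = 0\). The two arguments differ only in how they extract \(\wp(-2\alpha)\) and \(\wp_t(-2\alpha)\) from it.

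The paper works purely with formulas. It puts this fact into the duplication formula at \(u=-\alpha\) to get \(\wp(-2\alpha) = -2\wp(\alpha) = -2M\). It then uses the differentiated duplication formula \(\wp'(2u) = \frac{\wp''\wp'''}{4\wp'^2} - \frac{\wp''^3}{4\wp'^3} - \wp'\), in which every term but the last vanishes, giving \(\wp_t(-2\alpha) = -\wp_t(-\alpha) = 4\).

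You replace that second formula with the chord--tangent law. The horizontal tangent \(y=4\) at \((M,4)\) meets the cubic again at \((-2M,4)\). This gives both coordinates at once and fixes the sign of \(\wp_t(-2\alpha)\) without involving \(\wp'''\). The two routes are equivalent, since the duplication formulas are just the coordinates of the tangent-line construction.

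When you write it up, state the collinearity fact analytically rather than citing the group law:
\begin{itemize}
\item \(\wp_t(z) - 4\) is elliptic with a single triple pole at the lattice points, so its three zeros sum to \(0\) modulo the lattice.
\item \(\alpha\) is at least a double zero, because \(\wp_{tt}(\alpha)=0\).
\item Hence \(-2\alpha\) is the third zero, which gives \(\wp_t(-2\alpha)=4\) directly.
\end{itemize}
Your duplication-formula cross-check then identifies \(\wp(-2\alpha)\) as the residual root \(-2M\) rather than the double root \(M\).
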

\begin{proof}
	By assumption, \(\alpha_1\) and \(\alpha_3\) satisfy the required conditions. Since \(\wp_{tt}(-\alpha) = 6\wp(-\alpha)^2 - 6M^2 = 0\), the duplication formula for the Weierstrass \(\wp\)-function gives
	\begin{equation}
		\wp(\alpha_2) = \wp(-2\alpha) = \frac{1}{4}\left\{\frac{\wp''(-\alpha)}{\wp'(-\alpha)}\right\}^2 - 2\wp(-\alpha) = -2\wp(\alpha) = -2M. \nonumber
	\end{equation}
	Moreover,
	\begin{align}
		\wp_t(\alpha_3) &= \wp_t(-2\alpha) = \frac{\wp_{tt}(-\alpha)\wp_{ttt}(-\alpha)}{4\{\wp_t(-\alpha)\}^2} - \frac{\{\wp_{tt}(-\alpha)\}^3}{4\{\wp_t(-\alpha)\}^3} - \wp_t(-\alpha) = -\wp_t(-\alpha) \nonumber\\
		&= 4. \nonumber
	\end{align}
	Hence \(\alpha_2\) also satisfies the conditions in Proposition \ref{prop4.2}.
\end{proof}\vskip\baselineskip

In the sequel,, we set \(\alpha_1 = \alpha_3 = \alpha\) and \(\alpha_2 = -2\alpha\).

\begin{prop}\label{prop4.3}
	Writing \(f = \begin{pmatrix}
		x_1 \\ x_2 \\ x_3
	\end{pmatrix}\)
	then, for \(x\in \mathbb{R} \backslash \tilde{\Lambda},\ y \in \mathbb{R}\) we have
	\begin{align}
		x_1 &=\text{\scriptsize\( A_1\frac{\sigma(x+\beta-\alpha)}{\sigma(x+\beta)}e^{\zeta(\alpha)x}\cos{(ky)} + B_1\frac{\sigma(x+\beta+2\alpha)}{\sigma(x+\beta)}e^{-2\zeta(\alpha)x}\)}, \nonumber\\
		x_2 &=\text{\scriptsize\( C_1\frac{\sigma(x+\beta-\alpha)}{\sigma(x+\beta)}e^{\zeta(\alpha)x}\cos{(ky)} + D_1\frac{\sigma(x+\beta-\alpha)}{\sigma(x+\beta)}e^{\zeta(\alpha)x}\sin{(ky)} + E_1\frac{\sigma(x+\beta+2\alpha)}{\sigma(x+\beta)}e^{-2\zeta(\alpha)x}\)}, \nonumber\\
		x_3 &=\text{\scriptsize\( C_1\frac{\sigma(x+\beta-\alpha)}{\sigma(x+\beta)}e^{\zeta(\alpha)x}\cos{(ky)} - D_1\frac{\sigma(x+\beta-\alpha)}{\sigma(x+\beta)}e^{\zeta(\alpha)x}\sin{(ky)} + E_1\frac{\sigma(x+\beta+2\alpha)}{\sigma(x+\beta)}e^{-2\zeta(\alpha)x}\)}. \nonumber
	\end{align}
	\normalsize
	where
	\begin{align}
		A_1 &= \text{\scriptsize\( -\frac{2(m-a^2+a^{-1})\sigma(\beta)}{\sqrt{3}ak^2\sigma(\beta-\alpha)}\)}, \ \ \ B_1 =\text{\scriptsize\( \frac{(m-a^2)(m+a^2+2a^{-1})\sigma(\beta)}{\sqrt{3}k^2\sigma(\beta+2\alpha)}\)}, \nonumber\\
		C_1 &= \text{\scriptsize\( \frac{(m-a^2-2a^{-1})\sigma(\beta)}{\sqrt{3}ak^2\sigma(\beta-\alpha)}\)}, \ \ \ D_1 = \frac{\sigma(\beta)}{ak\sigma(\beta-\alpha)},\ \ \
		E_1 = \text{\scriptsize\( \frac{(m-a^2)(m+a^2-a^{-1})\sigma(\beta)}{\sqrt{3}k^2\sigma(\beta+2\alpha)}\)}, \nonumber
	\end{align}
\end{prop}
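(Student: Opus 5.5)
We obtain the formula by evaluating \(f = T\phi(t,-i)\tilde{\Phi}_+(x,-i)^{-1}(0,1,0)^t\). By the lemma on translational equivariance we have \(f(x+iy) = T\phi(iy)T^{-1}f(x)\), with \(T\phi(iy)T^{-1}\in{\rm SL}_3\mathbb{R}\), so the computation splits into two independent pieces. The first is the slice \(f(x)\) for real \(x\), coming from Proposition \ref{prop4.2}. The second is the one-parameter group \(T\phi(iy,-i)T^{-1}\), obtained by diagonalizing \(X(-i)\).

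\emph{Step 1: the slice.} Take the middle column of (\ref{F}). Its entries are \(Y_j^{(2)}(x)\) with \(\alpha_1=\alpha_3=\alpha\) and \(\alpha_2=-2\alpha\). With \(k_1=k\), \(k_2=0\), \(k_3=-k\) this gives
\[
Y_{1,3}^{(2)}=\frac{\sigma(x+\beta-\alpha)}{\sigma(x+\beta)}e^{(\zeta(\alpha)\pm k)x},\qquad
Y_2^{(2)}=\frac{\sigma(x+\beta+2\alpha)}{\sigma(x+\beta)}e^{-2\zeta(\alpha)x},
\]
using that \(\sigma\) and \(\zeta\) are odd. Then
\[
\tilde{\Phi}_+^{-1}(x,-i)(0,1,0)^t = K_1^{-1}D^{-1}\bigl(Y_1^{(2)},Y_2^{(2)},Y_3^{(2)}\bigr)^t,
\]
and \(K_1^{-1}\) can be inverted explicitly. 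The entries of \(D\) produce the factors \(\sigma(\beta)/\sigma(\beta-\alpha)\) and \(\sigma(\beta)/\sigma(\beta+2\alpha)\) that appear in \(A_1,\dots,E_1\).

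\emph{Step 2: the \(y\)-flow.} At \(\lambda=-i\) the matrix \(X\) has characteristic roots related to \(k_j\). The scalar equation (\ref{D}) with \(\lambda^3+\lambda^{-3}=0\) gives \(k^3-3Mk=0\), i.e. \(k_j\in\{\sqrt{3M},0,-\sqrt{3M}\}\), consistent with \(e^{k_jt}\) being the \(\phi\)-dependence. The cleanest route avoids computing \(\phi(iy)\) separately. Instead, use \(\phi(t)\tilde{\Phi}_+^{-1}(t)\) directly, and note that \(\tilde F(t)=\phi(t)\tilde\Phi_+^{-1}(t)\) is expressed through solutions whose \(t\)-dependence is \(e^{k_jt}\) times elliptic factors.

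Concretely, I expect \(\tilde F(t,-i)=P\,{\rm diag}(e^{k_jt})\,D\,\ldots\) for a constant eigenvector matrix \(P\) of \(X(-i)\). To see this, note that (\ref{E}) says \(X\tilde\Phi_+^{-1}=\tilde\Phi_+^{-1}(\cdots)\), so the rows of \(K_1\) are (up to \(D\)) left eigenvectors of \(X(-i)\) with eigenvalues \(k_j\). Hence \(\phi(t)=K_1^{-1}{\rm diag}(e^{k_1t},1,e^{k_3t})K_1\). Substituting,
\[
F(0,1,0)^t = \phi(iy)\tilde F(x)(0,1,0)^t
\]
becomes \(K_1^{-1}{\rm diag}(e^{iky},1,e^{-iky})D^{-1}(Y^{(2)}_j(x))\) with the \(e^{\pm kx}\) factors combined. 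Multiplying by \(T\) and taking real parts (using \(\tau\)-reality, \(\overline{TF}=TF\Delta\)) turns the pairs \(e^{\pm iky}\) into \(\cos(ky)\) and \(\sin(ky)\). The \(e^{\pm kx}\) should then cancel against the real normalization, leaving \(e^{\zeta(\alpha)x}\).

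\emph{Main obstacle.} The hardest part is bookkeeping: computing \(TK_1^{-1}\) and matching the constants \(A_1,\dots,E_1\). This includes checking that \(x_2\) and \(x_3\) share \(C_1,E_1\) and differ only in the sign of \(D_1\), which reflects complex conjugation of the second and third components of \(T\). The symmetric form of \(K_1\) (first and third rows swapped under \(k\mapsto -k\)) should make \(TK_1^{-1}\) tractable. I would first verify \(\det K_1\) and then simplify \(m^2-a^4-k^2=2a^{-2}\) using \(3M=k^2\). I am not certain the \(e^{\pm kx}\) factors disappear exactly as stated, and would double-check this against \(\wp(\alpha)=M\).
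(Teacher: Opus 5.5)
\textbf{Gap 1: the sign of the eigenvalues.} Your route matches the paper's: take the middle column \(K_1^{-1}D^{-1}(Y_1^{(2)},Y_2^{(2)},Y_3^{(2)})^t\) of (\ref{F}), diagonalize \(X(-i)\) with \(K_1\), and multiply by \(TK_1^{-1}\). But the step that makes the formula come out is wrong as you state it.

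The rows of \(K_1\) are left eigenvectors of \(X(-i)\) with eigenvalues \(-k_j\), not \(k_j\). A direct check, using \(k^2=m^2-a^4-2a^{-2}\), gives \(K_1X(-i)K_1^{-1}={\rm diag}(-k,0,k)\). Hence \(\phi(t,-i)=K_1^{-1}{\rm diag}(e^{-kt},1,e^{kt})K_1\).

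With your sign, the \(x\)-parts of \(\phi\) would combine with \(Y_{1,3}^{(2)}\propto e^{\pm kx}\) to give \(e^{\pm 2kx}\). No ``real normalization'' can remove these: \(f\) is already real, and \(\mathcal{E}g^{1/2}\) fixes \((0,1,0)^t\), so \(f=T\phi(t)\tilde\Phi_+^{-1}(x)(0,1,0)^t\) exactly.

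With the correct sign the cancellation is immediate: \(e^{-kt}e^{kx}=e^{-iky}\) and \(e^{kt}e^{-kx}=e^{iky}\). So \(f=TK_1^{-1}\bigl(Se^{-iky},S_2,Se^{iky}\bigr)^t\), where \(S=\frac{\sigma(\beta)\sigma(x+\beta-\alpha)}{\sigma(\beta-\alpha)\sigma(x+\beta)}e^{\zeta(\alpha)x}\) and \(S_2=\frac{\sigma(\beta)\sigma(x+\beta+2\alpha)}{\sigma(\beta+2\alpha)\sigma(x+\beta)}e^{-2\zeta(\alpha)x}\). The conjugate structure of the first and third columns of \(TK_1^{-1}\) then produces \(\cos(ky)\), \(\sin(ky)\) and \(A_1,\dots,E_1\) directly, with no real parts to take.

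Your justification of the eigenvector property via (\ref{E}) also does not work. That identity only says \(X\tilde\Phi_+^{-1}=\tilde\Phi_+^{-1}E(t)\), where \(E(t)\) is the \(\wp\)-dependent matrix on its left-hand side. This is an intertwining relation and says nothing by itself about eigenvectors. To conclude that the rows of \(K_1\) are eigenvectors, you would need each Bloch row built from \(Y_j^{(2)}\) to be an eigenvector of right multiplication by \(E(t)\). That is true here, but it is a consequence, not something visible from (\ref{E}). The paper simply verifies the matrix identity \(K_1X(-i)K_1^{-1}={\rm diag}(-k,0,k)\) directly.

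\textbf{Gap 2: the factor \(e^{-2\zeta(\alpha)x}\).} This factor in \(Y_2^{(2)}\) does not follow from oddness of \(\sigma\) and \(\zeta\). Oddness gives \(\zeta(-2\alpha)=-\zeta(2\alpha)\), but \(\zeta(2\alpha)=2\zeta(\alpha)\) fails in general. It holds here because of the duplication formula \(\zeta(2u)=2\zeta(u)+\wp''(u)/(2\wp'(u))\) together with \(\wp''(\alpha)=6\wp(\alpha)^2-6M^2=0\), which holds since \(\wp(\alpha)=M\). Equivalently, the paper uses the addition theorem with \(\wp'(-2\alpha)=\wp'(\alpha)=4\), which makes the correction term vanish. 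Without this identity, the exponent of the second summand in each \(x_i\) is not justified.
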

\begin{proof}
	Since
	\begin{align}
		\zeta(\alpha_2) &= \zeta(-2\alpha) = \zeta(-\alpha) - \zeta(\alpha) -\frac{1}{2}\frac{\wp_t(-2\alpha) - \wp_t(\alpha)}{\wp(-2\alpha) - \wp(\alpha)} = -2\zeta(\alpha). \nonumber
	\end{align}
	we obtain
	\begin{align}
		D^{-1}\begin{pmatrix}
			Y_1^{(2)}(x) \\ Y_2^{(2)}(x) \\ Y_3^{(2)}(x)
		\end{pmatrix}
		&= \begin{pmatrix}
			\frac{\sigma(\beta)\sigma(x + \beta - \alpha)}{\sigma(\beta-\alpha)\sigma(x + \beta)}e^{(\zeta(\alpha) + k)x} \\
			\frac{\sigma(\beta)\sigma(x + \beta + 2\alpha)}{\sigma(\beta + 2\alpha)\sigma(x + \beta)}e^{-2\zeta(\alpha)x} \\
			\frac{\sigma(\beta)\sigma(x + \beta - \alpha)}{\sigma(\beta - \alpha)\sigma(x + \beta)}e^{(\zeta(\alpha) - k)x}
		\end{pmatrix}. \nonumber
	\end{align}
	
	It follows from
	\begin{align}
		K_1\left(\begin{array}{ccc}
			m & -a^{-1} & a^2 \\
			a^{-1} & 0 & -a^{-1} \\
			-a^2 & a^{-1} & -m
		\end{array}\right)K_1^{-1} = \left(\begin{array}{ccc}
			-k & 0  & 0  \\
			0 	& 0 & 0  \\
			0 	& 0  & k
		\end{array}\right) \nonumber
	\end{align}
	and
	\begin{align}
		&TK_1^{-1} \nonumber\\
		&= \text{\tiny \(-\frac{1}{2\sqrt{3}ak^2}\left(\begin{array}{ccc}
				2(m-a^2+a^{-1}) & -2a(m-a^2)(m+a^2+2a^{-1}) & 2(m-a^2+a^{-1}) \\
				-m+a^2+2a^{-1} - i\sqrt{3}k & -2a(m-a^2)(m+a^2-a^{-1}) & 	-m+a^2+2a^{-1} + i\sqrt{3}k \\
				-m+a^2+2a^{-1} + i\sqrt{3}k & -2a(m-a^2)(m+a^2-a^{-1}) & 	-m+a^2+2a^{-1} - i\sqrt{3}k
			\end{array}\right)\)}, \nonumber
	\end{align}
	that
	\begin{align}
		f &= TK_1^{-1}\exp{\left(tK_1\left(\begin{array}{ccc}
				m & -a^{-1} & a^2 \\
				a^{-1} & 0 & -a^{-1} \\
				-a^2 & a^{-1} & -m
			\end{array}\right)K_1^{-1}\right)}D^{-1}\begin{pmatrix}
			Y_1^{(2)}(x) \\ Y_2^{(2)}(x) \\ Y_3^{(2)}(x)
		\end{pmatrix} \nonumber\\
		&= TK_1^{-1}\left(\begin{array}{ccc}
			e^{-kt} & 0 & 0 \\
			0 & 1 & 0 \\
			0 & 0 & e^{kt}
		\end{array}\right)D^{-1}\begin{pmatrix}
			Y_1^{(2)}(x) \\ Y_2^{(2)}(x) \\ Y_3^{(2)}(x)
		\end{pmatrix} \nonumber\\
		&= \text{\tiny \(\begin{pmatrix}
				A_1\frac{\sigma(x+\beta-\alpha)}{\sigma(x+\beta)}e^{\zeta(\alpha)x}\cos{(ky)} + B_1\frac{\sigma(x+\beta+2\alpha)}{\sigma(x+\beta)}e^{-2\zeta(\alpha)x}\\ C_1\frac{\sigma(x+\beta-\alpha)}{\sigma(x+\beta)}e^{\zeta(\alpha)x}\cos{(ky)} + D_1\frac{\sigma(x+\beta-\alpha)}{\sigma(x+\beta)}e^{\zeta(\alpha)x}\sin{(ky)} + E_1\frac{\sigma(x+\beta+2\alpha)}{\sigma(x+\beta)}e^{-2\zeta(\alpha)x}\\
				C_1\frac{\sigma(x+\beta-\alpha)}{\sigma(x+\beta)}e^{\zeta(\alpha)x}\cos{(ky)} - D_1\frac{\sigma(x+\beta-\alpha)}{\sigma(x+\beta)}e^{\zeta(\alpha)x}\sin{(ky)} + E_1\frac{\sigma(x+\beta+2\alpha)}{\sigma(x+\beta)}e^{-2\zeta(\alpha)x}
			\end{pmatrix}
			\)}. \nonumber
	\end{align}
\end{proof}\vskip\baselineskip

\subsubsection{The case \(M = 0\)}
Assume that \(M = \frac{1}{3}(m^2 - a^4 - 2a^{-2}) = 0\), then \(k_1 = k_2 = k_3 = M = 0\), and (\ref{B}) reduces to
\begin{equation}\label{B'}
	Y_{ttt} - 6\wp(t+\beta)Y_t = 8\lambda^3iY. \tag{4'}
\end{equation}

\begin{lem}\label{lem4.4}
	Let \(\alpha, \beta\) satisfy
	\begin{align}
		&\wp(\beta) = 2a^{-2},\ \ \ \ \ \wp_t(\beta) =  4a^{-2}m, \nonumber\\
		&\wp(\alpha) = 0,\ \ \ \ \ \wp_t(\alpha) = 4 \nonumber
	\end{align}
	and
	\begin{equation}
		q(t) = \int_{0}^{t}(t-s)\left(\frac{\sigma(s+\beta)}{\sigma(s+\beta-\alpha)}\right)^3e^{-3\zeta(\alpha)s}ds. \nonumber
	\end{equation}
	Then \(Y_1^{(2)} = \frac{\sigma(t+\beta-\alpha)}{\sigma(t+\beta)}e^{\zeta(\alpha)t}, Y^{(2)}_2 = tY_1^{(2)}(t)\) and \(Y_3^{(2)} = q(t)Y_1^{(2)}(t)\) are fundamental solutions to (\ref{B'}) on \(\mathbb{C} \backslash \tilde{\Lambda}\).
\end{lem}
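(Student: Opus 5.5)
The plan is to use reduction of order, built on the single exponential-type solution already supplied by the Weierstrass ansatz of Section \ref{sec4.1}. Throughout, \(\lambda=-i\), so \(\lambda^3=i\) and the right-hand side of (\ref{B'}) is \(8\lambda^3 iY=-8Y\).

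First, \(Y_1^{(2)}\) is a solution. With \(M=0\) and \(k=0\), conditions (\ref{C}) read \(-3\wp(\alpha)=0\) and \(-2\wp_t(\alpha)=-8\). These are exactly \(\wp(\alpha)=0\) and \(\wp_t(\alpha)=4\). The Proposition following Lemma \ref{lem4.1} then shows that \(Y_1^{(2)}=\frac{\sigma(t+\beta-\alpha)}{\sigma(t+\beta)}e^{\zeta(\alpha)t}\) solves (\ref{B'}). Moreover \(Y_1'=\psi Y_1\), and by Lemma \ref{lem4.1}, \(Y_1''=(2\wp(t+\beta)+\wp(\alpha))Y_1=2\wp(t+\beta)Y_1\).

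Next I substitute \(Y=vY_1\) into (\ref{B'}). Expanding \((vY_1)'''\) and using the two relations above, the terms in \(v\) vanish because \(Y_1\) is a solution. The coefficient of \(v'\) is \(3Y_1''-6\wp Y_1=0\). Hence
\begin{equation*}
(vY_1)'''-6\wp(t+\beta)(vY_1)'+8vY_1=Y_1\bigl(v'''+3\psi v''\bigr).
\end{equation*}
So \(vY_1\) is a solution if and only if \(w=v''\) satisfies \(w'=-3\psi w\). Since \(\psi=\zeta(t+\beta-\alpha)-\zeta(t+\beta)+\zeta(\alpha)\) is the logarithmic derivative of \(Y_1\), one solution is
\begin{equation*}
w=Y_1^{-3}=\Bigl(\frac{\sigma(t+\beta)}{\sigma(t+\beta-\alpha)}\Bigr)^3e^{-3\zeta(\alpha)t}.
\end{equation*}
The choice \(v=t\) has \(v''=0\), so \(Y_2^{(2)}=tY_1^{(2)}\) is a solution. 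The choice \(v=q\) has \(q''=w\) by differentiating the integral twice, so \(Y_3^{(2)}=qY_1^{(2)}\) is a solution.

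For independence I compute the Wronskian, using \(W(vY_1,\dots)=Y_1^3W(1,t,q)=Y_1^3q''=Y_1^3w=1\). It is a nonzero constant, which is consistent with the absence of a \(Y''\) term in (\ref{B'}). Hence the three functions are fundamental solutions.

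The step needing care is the domain. The integrand of \(q\) has poles where \(\sigma(s+\beta-\alpha)=0\), that is at \(s+\beta\equiv\alpha\) modulo the period lattice. There \(\wp(s+\beta)=\wp(\alpha)=0=M\), so these points lie in \(\tilde\Lambda\). Zeros of \(\sigma(t+\beta)\) are also in \(\tilde\Lambda\), so \(Y_1\) is holomorphic and nonvanishing off \(\tilde\Lambda\). On a simply-connected subset of \(\mathbb{C}\backslash\tilde\Lambda\) containing \(0\) (or joined to \(0\) by a chosen path), the integral defining \(q\) is therefore well defined and holomorphic. I expect this domain and path issue, together with the verification that the \(v'\)-coefficient cancels, to be the only nontrivial points.
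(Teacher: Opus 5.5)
Your proposal is correct and follows essentially the paper's argument: the paper checks directly that \(tY_1^{(2)}\) and \(qY_1^{(2)}\) solve (\ref{B'}) using \((Y_1^{(2)})_{tt}=2\wp Y_1^{(2)}\) and \(q_{ttt}=-3\tilde{\psi}q_{tt}\), which is your reduction-of-order identity \((vY_1)'''-6\wp(vY_1)'+8vY_1=Y_1(v'''+3\psi v'')\) specialized to \(v=t\) and \(v=q\). Your Wronskian computation \(W=Y_1^3q''=1\) is a useful addition, because it supplies the linear independence that the paper's proof of this lemma leaves implicit; in the paper, independence only shows up later through the invertibility of \(K_2\).
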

\begin{proof}
	As shown in Section \ref{sec4.1}, \(Y_2^{(2)}\) is a solution of (\ref{B'}). Since \((Y_1^{(2)})_t = \tilde{\psi}Y_1^{(2)}\) and \((Y_1^{(2)})_{tt} = 2\wp Y_1^{(2)}\), we obtain
	\begin{align}
		(Y_2^{(2)})_t &= Y_1^{(2)}+ t(Y_1^{(2)})_t,\ \ \ \ \ (Y_2^{(2)})_{tt} = 2(Y_1^{(2)})_t + t(Y_1^{(2)})_{tt}, \nonumber\\
		(Y_2^{(2)})_{ttt} &= 3(Y_1^{(2)})_{tt} + t(Y_1^{(2)})_{ttt} = 6\wp Y_1^{(2)} + 6t\wp (Y_1^{(2)})_t + 8\lambda^3 itY_1^{(2)} \nonumber\\
		&= 6\wp (Y_2^{(2)})_t + 8\lambda^3 i Y_2^{(2)}. \nonumber
	\end{align}
	Moreover, since
	\begin{align}
		&q_t = \int_{0}^{t}\left(\frac{\sigma(s+\beta)}{\sigma(s+\beta-\alpha)}\right)^3e^{-3\zeta(\alpha)s}ds,\ \ \ \ \ q_{tt} = \left(\frac{\sigma(t+\beta)}{\sigma(t+\beta-\alpha)}\right)^3e^{-3\zeta(\alpha)t}, \nonumber\\
		&q_{ttt} = -3\tilde{\psi}(t)q_{tt}, \nonumber
	\end{align}
	we obtain
	\begin{align}
		(Y_3^{(2)})_t &= q_tY_1^{(2)} + q(Y_1^{(2)})_t,\ \ \ \ \ 	(Y_3^{(2)})_{tt} = q_{tt}Y_1^{(2)} + 2q_t(Y_1^{(2)})_t + q(Y_1^{(2)})_{tt}, \nonumber\\
		(Y_3^{(2)})_{ttt} &= q_{ttt}Y_1^{(2)} + 3q_{tt}(Y_1^{(2)})_t + 3q_t(Y_1^{(2)})_{tt} + q(Y_1^{(2)})_{ttt} \nonumber\\
		&= 6q_t\wp Y_1^{(2)} + 6q\wp (Y_1^{(2)})_t + 8q\lambda^3 iY_1^{(2)} \nonumber\\
		&= 6\wp(Y_3^{(2)})_t + 8\lambda^3 i Y_3^{(2)}. \nonumber
	\end{align}
\end{proof}\vskip\baselineskip

\begin{prop}
	Set
	\begin{equation}
		K_2 = \frac{\sigma(\beta-\alpha)}{\sigma(\beta)}\left(\begin{array}{ccc}
			-\frac{a}{2}(m+a^2) & 1 & -\frac{a}{2}(m+a^2) \\
			\frac{a}{2} & 0 & -\frac{a}{2} \\
			\frac{1}{4a}\left(\frac{\sigma(\beta)}{\sigma(\beta - \alpha)}\right)^3 & 0 & 0
		\end{array}\right). \nonumber
	\end{equation}
	Then,
	\begin{equation}\label{G}
		\tilde{\Phi}_+^{-1}(t,-i) = K_2^{-1}\left(\begin{array}{ccc}
			\frac{1}{4a}\{(Y_1^{(2)})_{tt} -
			\frac{\wp_t(t+\beta)}{\wp(t+\beta)}(Y_1^{(2)})_t\} & Y_1^{(2)} & -\frac{a}{2}(Y_1^{(2)})_t \\
			\frac{1}{4a}\{(Y_2^{(2)})_{tt} -
			\frac{\wp_t(t+\beta)}{\wp(t+\beta)}(Y_2^{(2)})_t\} & Y_2^{(2)} & -\frac{a}{2}(Y_2^{(2)})_t \\
			\frac{1}{4a}\{(Y_3^{(2)})_{tt} -
			\frac{\wp_t(t+\beta)}{\wp(t+\beta)}(Y_3^{(2)})_t\} & Y_3^{(2)} & -\frac{a}{2}(Y_3^{(2)})_t \\
		\end{array}\right),
	\end{equation}
	is a solution to (\ref{A'}) on \(\mathbb{C} \backslash \tilde{\Lambda}\).
\end{prop}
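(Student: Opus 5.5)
The argument follows the proof of Proposition \ref{prop4.2}. It has two parts: the matrix on the right of (\ref{G}) satisfies (\ref{A'}) with $\lambda=-i$, and it equals $I_3$ at $t=0$.

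For the first part, note that each row of the matrix is built from a single solution $Y^{(2)}$ of (\ref{B'}) in the way prescribed by (\ref{B}). Putting $\lambda=-i$ and $M=0$ in (\ref{B}) gives
\[
Y^{(3)}=\tfrac{ia}{2\lambda}Y^{(2)}_t=-\tfrac{a}{2}Y^{(2)}_t,\qquad Y^{(1)}=-\tfrac{1}{4\lambda^2a}\{\cdots\}=\tfrac{1}{4a}\{Y^{(2)}_{tt}-\tfrac{\wp_t}{\wp}Y^{(2)}_t\}.
\]
These are exactly the entries of each row. Reading the derivation of (\ref{B}) backwards, a row $(Y^{(1)},Y^{(2)},Y^{(3)})$ of this form satisfies the row system for (\ref{A'}) precisely when $Y^{(2)}$ solves (\ref{B'}). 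The only equation not used in defining $Y^{(1)}$ and $Y^{(3)}$ is the third component, and it becomes the third-order equation. By Lemma \ref{lem4.4}, $Y_1^{(2)},Y_2^{(2)},Y_3^{(2)}$ are solutions of (\ref{B'}) on $\mathbb{C}\backslash\tilde\Lambda$. Hence the matrix $W(t)$ inside (\ref{G}) satisfies $W_t=WA$, where $A$ is the coefficient matrix of (\ref{A'}). Left multiplication by the constant matrix $K_2^{-1}$ preserves this equation.

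For the second part, we must check $W(0)=K_2$; this is the main computation. Write $c=\sigma(\beta-\alpha)/\sigma(\beta)$. The first row is handled as in Proposition \ref{prop4.2} with $k=0$, using $\tilde\psi(0)=\tfrac12(\wp_t(\beta)+4)/\wp(\beta)$ and Lemma \ref{lem4.1}, which gives $\tilde\psi_t+\tilde\psi^2=2\wp$ since $\wp(\alpha)=0$. Substituting $\wp(\beta)=2a^{-2}$ and $\wp_t(\beta)=4a^{-2}m$ gives $\tilde\psi(0)=m+a^2$ and $Y_1''(0)=4a^{-2}c$. Row one is then $\bigl(\tfrac{c}{4a}(4a^{-2}-2m(m+a^2)),\,c,\,-\tfrac{a}{2}(m+a^2)c\bigr)$. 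Using $M=0$, i.e. $m^2=a^4+2a^{-2}$, the first entry simplifies to $-\tfrac{a}{2}(m+a^2)c$, which matches the first row of $K_2$.

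For $Y_2=tY_1$ we have $Y_2(0)=0$, $Y_2'(0)=Y_1(0)=c$ and $Y_2''(0)=2Y_1'(0)$. This gives the row $\bigl(\tfrac{1}{4a}(2\tilde\psi(0)-\tfrac{\wp_t(\beta)}{\wp(\beta)})c,\,0,\,-\tfrac a2 c\bigr)$. Its first entry is $\tfrac{1}{4a}(2m+2a^2-2m)c=\tfrac a2 c$, which matches.

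For $Y_3=qY_1$ we have $q(0)=q_t(0)=0$ and $q_{tt}(0)=c^{-3}$. So $Y_3(0)=Y_3'(0)=0$ and $Y_3''(0)=c^{-3}\cdot c$. The row is therefore $(\tfrac{1}{4a}c^{-2},0,0)$, which equals $c\cdot\tfrac{1}{4a}c^{-3}$ as in $K_2$.

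Since $K_2$ has determinant $c^3\cdot\tfrac{1}{4a}c^{-3}\cdot(\tfrac a2+\tfrac a2)\neq0$, it is invertible. Uniqueness for the linear initial value problem (\ref{A'}) on simply connected subsets of $\mathbb{C}\backslash\tilde\Lambda$ then concludes the proof. The only delicate point is the use of $M=0$ to simplify the $(1,1)$ entry.
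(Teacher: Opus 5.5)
Your proposal is correct and follows the paper's proof. The rows are built from solutions of (\ref{B'}) via (\ref{B}) with $\lambda=-i$, so they satisfy (\ref{A'}); the matrix at $t=0$ equals $K_2$ by the same initial values the paper lists, including the use of $m^2=a^4+2a^{-2}$ in the $(1,1)$ entry; and uniqueness finishes the argument. There are two harmless slips: the leftover equation that becomes the third-order ODE is the first component (the one for $Y^{(1)}_t$), not the third, and in fact $\det K_2=-\tfrac18$ rather than your value, which still gives invertibility.
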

\begin{proof}
	From Lemma \ref{lem4.4}, the right hand side of (\ref{G}) satisfies (\ref{A'}). Using
	\begin{align}
		&Y_1^{(2)}(0) = \frac{\sigma(\beta-\alpha)}{\sigma(\beta)},\  (Y_1^{(2)})_t(0) = (m+a^2)\frac{\sigma(\beta-\alpha)}{\sigma(\beta)}, \ (Y_1^{(2)})_{tt}(0) = \frac{4}{a^2}\frac{\sigma(\beta-\alpha)}{\sigma(\beta)}, \nonumber\\
		&Y_2^{(2)}(0) = 0,\ \ \ \ \ (Y_2^{(2)})_t(0) = \frac{\sigma(\beta-\alpha)}{\sigma(\beta)},\ \ \ \ \ \ (Y_2^{(2)})_{tt}(0) = 2(m+a^2)\frac{\sigma(\beta-\alpha)}{\sigma(\beta)}, \nonumber\\
		&Y_3^{(2)}(0) = 0,\ \ \ \ \ (Y_3^{(2)})_t(0) = 0,\ \ \ \ \ (Y_3^{(2)})_{tt}(0) = \left(\frac{\sigma(\beta)}{\sigma(\beta - \alpha)}\right)^2, \nonumber
	\end{align}
	we obtain
	\begin{align}
		&\left(\begin{array}{ccc}
			\frac{1}{4a}\{(Y_1^{(2)})_{tt}(0) - \frac{\wp_t(\beta)}{\wp(\beta)}(Y_1^{(2)})_t(0)\} & Y_1^{(2)}(0) & -\frac{a}{2}(Y_1^{(2)})_t(0) \\
			\frac{1}{4a}\{(Y_2^{(2)})_{tt}(0) - \frac{\wp_t(\beta)}{\wp(\beta)}(Y_2^{(2)})_t(0)\} & Y_2^{(2)}(0) & -\frac{a}{2}(Y_2^{(2)})_t(0) \\
			\frac{1}{4a}\{(Y_3^{(2)})_{tt}(0) - \frac{\wp_t(\beta)}{\wp(\beta)}(Y_3^{(2)})_t(0)\} & Y_3^{(2)}(0) & -\frac{a}{2}(Y^{(2)}_3)_t(0) \\
		\end{array}\right) = K_2. \nonumber
	\end{align}
	Evaluating the right-hand side of (\ref{G}) at \(t=0\), we obtain \(I_3\). The conclusion then follows from uniqueness.
\end{proof}\vskip\baselineskip

\begin{prop}\label{prop4.5}
	Writing \(f = \begin{pmatrix}
		x_1 \\ x_2 \\ x_3
	\end{pmatrix}\)
	then, for \(x\in \mathbb{R} \backslash \tilde{\Lambda},\ y \in \mathbb{R}\) we have
	\begin{align}
		x_1 &= \frac{\sigma(x+\beta-\alpha)}{\sigma(x+\beta)}e^{\zeta(\alpha)x}\left\{A_2(x^2+y^2) + B_2(x)\right\}, \nonumber\\ 
		x_2 &= \frac{\sigma(x+\beta-\alpha)}{\sigma(x+\beta)}e^{\zeta(\alpha)x}\left\{C_2(x^2+y^2) + D_2y + E_2(x)\right\}, \nonumber\\
		x_3 &= \frac{\sigma(x+\beta-\alpha)}{\sigma(x+\beta)}e^{\zeta(\alpha)x}\left\{C_2(x^2+y^2) - D_2y + E_2(x)\right\}, \nonumber
	\end{align}
	where
	\begin{align}
		A_2 &= \frac{(m+a^2+2a^{-1})(m-a^2)\sigma(\beta)}{2\sqrt{3}\sigma(\beta-\alpha)},\ \ \ C_2 = \frac{(m+a^2-a^{-1})(m-a^2)\sigma(\beta)}{2\sqrt{3}\sigma(\beta-\alpha)},  \nonumber\\
		D_2 &= \text{\small \(\frac{\sigma(\beta)}{a\sigma(\beta-\alpha)}\)}, \nonumber\\
		B_2(x) &= \text{\footnotesize \(\frac{\sigma(\beta)}{\sqrt{3}\sigma(\beta-\alpha)}\left\{
			1 - (m+a^2+2a^{-1})x + 4a^2(m+a^2+2a^{-1})q(x)\left(\frac{\sigma(\beta-\alpha)}{\sigma(\beta)}\right)^3
			\right\}\)} \nonumber\\
		E_2(x) &= \text{\footnotesize \(\frac{\sigma(\beta)}{\sqrt{3}\sigma(\beta-\alpha)}\left\{
			1 - (m+a^2-a^{-1})x + 4a^2(m+a^2-a^{-1})q(x)\left(\frac{\sigma(\beta-\alpha)}{\sigma(\beta)}\right)^3
			\right\}\)}. \nonumber
	\end{align}
\end{prop}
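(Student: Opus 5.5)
We mirror the proof of Proposition \ref{prop4.3}. Fix \(\lambda=-i\) and write
\begin{equation*}
	f = TF\begin{pmatrix} 0\\1\\0\end{pmatrix} = T\phi(t,-i)\tilde{\Phi}_+(x,-i)^{-1}\begin{pmatrix} 0\\1\\0\end{pmatrix}.
\end{equation*}
By Theorem \ref{thm3.4}, the factors \(\mathcal{E}(x)g(x)^{\frac12}\) act trivially on the middle basis vector, and we use the solution (\ref{G}) for \(\tilde{\Phi}_+^{-1}\). The middle column of the matrix in (\ref{G}) is \((Y_1^{(2)},Y_2^{(2)},Y_3^{(2)})^t\). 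Hence
\begin{equation*}
	f = TK_2^{-1}\bigl(K_2\,\phi(t,-i)\,K_2^{-1}\bigr)\begin{pmatrix} Y_1^{(2)}(x)\\ Y_2^{(2)}(x)\\ Y_3^{(2)}(x)\end{pmatrix}
\end{equation*}
after commuting \(\phi\) past \(K_2^{-1}\). For \(\lambda=-i\) we have \(X=\begin{pmatrix} m&-a^{-1}&a^2\\ a^{-1}&0&-a^{-1}\\ -a^2&a^{-1}&-m\end{pmatrix}\). Since \(M=0\), the characteristic polynomial of \(X\) is \(\mu^3-3M\mu=\mu^3\), so \(X\) is nilpotent. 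The first key step is to verify that \(K_2XK_2^{-1}\) is the standard nilpotent Jordan block \(N\) in the ordering matching \(Y_1,Y_2,Y_3\).

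This should follow from the structure of (\ref{B'}). Since \(\phi(t)\tilde{\Phi}_+^{-1}\) evaluated on the middle column is governed by the same linear ODE, the relation \(X\tilde{\Phi}_+^{-1}=\tilde{\Phi}_+^{-1}(\cdots)\) from (\ref{E}) shows that \(K_2XK_2^{-1}\) encodes how \(d/dt\) acts on the span of \(Y_1,tY_1,qY_1\) modulo the coefficient part. I would instead check \(K_2X=NK_2\) directly from the explicit \(K_2\), using \(m^2=a^4+2a^{-2}\). I expect the shifts \(Y_2=tY_1\) and \(Y_3=qY_1\), with \(q_{tt}(0)=(\sigma(\beta)/\sigma(\beta-\alpha))^3\), to produce \(\exp(tN)=I+tN+\tfrac{t^2}{2}N^2\), which has polynomial entries in \(t\).

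Next, set \(t=x+iy\) and expand. The entries become polynomials of degree at most \(2\) in \(t\), multiplied by \(Y_1(x)\), \(xY_1(x)\) and \(q(x)Y_1(x)\). All three share the common factor \(\frac{\sigma(x+\beta-\alpha)}{\sigma(x+\beta)}e^{\zeta(\alpha)x}\), which we factor out. Then multiply by \(TK_2^{-1}\). The matrix \(T\) mixes the components with \(1,\omega,\omega^2\), and the reality of \(f\) (Lemma \ref{lem3.2} and Theorem \ref{thm3.4}) guarantees that only real combinations survive. The terms in \(t^2\) and \(xt\) must combine to give \(x^2+y^2\), while the imaginary parts \(iy\) combine with \(\omega-\omega^2=i\sqrt3\) to give the \(\pm D_2y\) terms. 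The remaining parts, which are linear in \(x\) or involve \(q(x)\), form \(B_2(x)\) and \(E_2(x)\).

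The main obstacle is this bookkeeping, namely computing \(K_2^{-1}\) and \(TK_2^{-1}\) explicitly and confirming that the coefficients reduce to the stated \(A_2,\dots,E_2\) using \(M=0\). In particular, it must be checked that the \(t^2\) contributions assemble into the rotationally symmetric quantity \(|t|^2\) rather than \(\mathrm{Re}(t^2)\). That this happens reflects the cancellation of the cross terms between \(tN\) acting on \(xY_1\) and \(\tfrac{t^2}{2}N^2\) acting on \(Y_1\). I would verify it by computing the first row of \(TK_2^{-1}e^{tN}\) symbolically. I would then check that the constants agree with the \(M\to0\) limit of \(A_1,\dots,E_1\) from Proposition \ref{prop4.3}, as a consistency test.
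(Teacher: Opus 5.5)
Your plan is essentially the paper's proof: write $f = TK_2^{-1}\exp\bigl(tK_2XK_2^{-1}\bigr)\bigl(Y_1^{(2)}(x),Y_2^{(2)}(x),Y_3^{(2)}(x)\bigr)^t$, use nilpotency so that the exponential is quadratic in $t$, and expand with $t=x+iy$; the paper's extra conjugation by $P$ only tidies the bookkeeping. One correction: $K_2XK_2^{-1}$ is not the standard Jordan block but the strictly lower-triangular matrix $N$ with entries $-1$, $-\frac{1}{4a^2}\bigl(\frac{\sigma(\beta)}{\sigma(\beta-\alpha)}\bigr)^3$, $\frac{m-a^2}{4a^2}\bigl(\frac{\sigma(\beta)}{\sigma(\beta-\alpha)}\bigr)^3$ in positions $(2,1),(3,1),(3,2)$; this still gives $\exp(tN)=I+tN+\frac{t^2}{2}N^2$, and $|t|^2$ arises from $t\bigl(x-\frac{t}{2}\bigr)=\frac12 t\bar t$, as you anticipated.
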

\begin{proof}
	Let
	\begin{equation}
		P = \left(\begin{array}{ccc}
			1 & 0 & 0 \\
			0 & i & 0 \\
			0 & 1 & -4a^2\left(\frac{\sigma(\beta-\alpha)}{\sigma(\beta)}\right)^3
		\end{array}\right). \nonumber
	\end{equation}
	Since
	\small
	\begin{align}
		K_2\left(\begin{array}{ccc}
			m & -a^{-1} & a^2 \\
			a^{-1} & 0 & -a^{-1} \\
			-a^2 & a^{-1} & -m
		\end{array}\right)K_2^{-1} &= \left(\begin{array}{ccc}
			0 & 0 & 0 \\
			-1 & 0 & 0 \\
			-\frac{1}{4a^2}\left(\frac{\sigma(\beta)}{\sigma(\beta-\alpha)}\right)^3 & \frac{m-a^2}{4a^2}\left(\frac{\sigma(\beta)}{\sigma(\beta-\alpha)}\right)^3 & 0
		\end{array}\right) \nonumber\\
		&= P^{-1}\left(\begin{array}{ccc}
			0 & 0 & 0 \\
			-i & 0 & 0 \\
			0 & i(m-a^2) & 0
		\end{array}\right)P \nonumber
	\end{align}
	\normalsize
	and
	\begin{equation}
		TK_2^{-1}P^{-1} = \frac{\sigma(\beta)}{\sqrt{3}\sigma(\beta-\alpha)}\left(\begin{array}{ccc}
			1 & 0 & -m-a^2-2a^{-1} \\
			1 & \sqrt{3}a^{-1} & -m-a^2+a^{-1} \\
			1 & -\sqrt{3}a^{-1} & -m-a^2+a^{-1}
		\end{array}\right), \nonumber
	\end{equation}
	we obtain
	\begin{align}
		f &= TK_2^{-1}\exp{\left(tK_2\left(\begin{array}{ccc}
				m & -a^{-1} & a^2 \\
				a^{-1} & 0 & -a^{-1} \\
				-a^2 & a^{-1} & -m
			\end{array}\right)K_2^{-1}\right)}\begin{pmatrix}
			Y_1^{(2)}(x) \\ Y_2^{(2)}(x) \\ Y_3^{(2)}(x)
		\end{pmatrix} \nonumber\\
		&= TK_2^{-1}P^{-1}\left(\begin{array}{ccc}
			1 & 0 & 0 \\
			-it & 1 & 0 \\
			\frac{(m-a^2)}{2}t^2 & i(m-a^2)t & 1
		\end{array}\right)P\begin{pmatrix}
			Y_1^{(2)}(x) \\ Y_2^{(2)}(x) \\ Y_3^{(2)}(x)
		\end{pmatrix} \nonumber\\
		&= \begin{pmatrix}
			\frac{\sigma(x+\beta-\alpha)}{\sigma(x+\beta)}e^{\zeta(\alpha)x}\left\{A_2(x^2+y^2) + B_2(x)\right\} \\ 
			\frac{\sigma(x+\beta-\alpha)}{\sigma(x+\beta)}e^{\zeta(\alpha)x}\left\{C_2(x^2+y^2) + D_2y + E_2(x)\right\} \\
			\frac{\sigma(x+\beta-\alpha)}{\sigma(x+\beta)}e^{\zeta(\alpha)x}\left\{C_2(x^2+y^2) - D_2y + E_2(x)\right\}
		\end{pmatrix}. \nonumber
	\end{align}
\end{proof}\vskip\baselineskip

\subsection{Classification of translationally equivariant affine spheres}
From Proposition \ref{prop4.3} and \ref{prop4.5}, we classify translationally equivariant hyperbolic affine spheres according to the sign of \(M = \frac{1}{3}(m^2 - a^2 - 2a^{-2})\).
\begin{theorem}\label{thm4.6}
	Let \(f\) be a hyperbolic affine sphere obtained from the Delaunay-type potential.
	\begin{itemize}
		\item [(i)] If \(3M = m^2 -2a^{-2} -a^4 < 0\), then
		\begin{align}
			\gamma_1 := \frac{i}{k}\left(\begin{array}{ccc}
				a^{-1} & 0  & 0  \\
				0 & m-a^2 & 0  \\
				0 	& 0  & a^{-1}
			\end{array}\right)K_1T^{-1} \in {\rm SL}_3 \mathbb{R} \nonumber
		\end{align}
		and \(\gamma_1f = \begin{pmatrix}
			X_1(x,y) \\ X_2(x,y) \\ X_3(x,y)
		\end{pmatrix}\) satisfies
		\begin{align}
			&X_1X_3 = -\frac{1}{a^2k^2}\left(\frac{\sigma(\beta)\sigma(x+\beta-\alpha)}{\sigma(x+\beta)\sigma(\beta-\alpha)}e^{\zeta(\alpha)x}\right)^2, \nonumber\\ 
			&X_2 = -\frac{i(m-a^2)\sigma(\beta)\sigma(x+\beta+2\alpha)}{k\sigma(x+\beta)\sigma(\beta+2\alpha)}e^{-2\zeta(\alpha)x}. \nonumber
		\end{align}
		The image is shown in Figure~1.
		
		\item [(ii)] If \(3M = m^2 -2a^{-2} -a^4 > 0\), then
		\begin{align}
			\gamma_2 := \frac{1}{2k}\left(\begin{array}{ccc}
				2a^{-1} & 0 & 2a^{-1} \\
				0 & m-a^2 & 0 \\
				2ia^{-1} & 0 & -2ia^{-1}
			\end{array}\right)K_1T^{-1} \in {\rm SL}_3 \mathbb{R} \nonumber
		\end{align}
		and \(\gamma_2f = \begin{pmatrix}
			X_1(x,y) \\ X_2(x,y) \\ X_3(x,y)
		\end{pmatrix}\) satisfies
		\begin{align}
			&X_1^2 + X_3^2 = \frac{4}{a^2k^2}\left(\frac{\sigma(\beta)\sigma(x+\beta-\alpha)}{\sigma(x+\beta)\sigma(\beta-\alpha)}e^{\zeta(\alpha)x}\right)^2, \nonumber\\ 
			&X_2 = \frac{(m-a^2)\sigma(\beta)\sigma(x+\beta+2\alpha)}{2k\sigma(x+\beta)\sigma(\beta+2\alpha)}e^{-2\zeta(\alpha)x}. \nonumber
		\end{align}
		The image is shown in Figure~2.
		
		\item [(iii)] If \(3M = m^2 -2a^{-2} -a^4 = 0\), then
		\begin{equation}
			\gamma_3 = -\frac{\sigma(\beta)}{\sigma(\beta-\alpha)}\left(\begin{array}{cccc}
				0 & 1 & 0 \\
				2a^{-2} & 0 & 0 \\
				0 & 0 & 1
			\end{array}\right)PK_2T^{-1} \in {\rm SL}_3 \mathbb{R} \nonumber
		\end{equation}
		and \(\gamma_3f = \begin{pmatrix}
			X_1(x,y) \\ X_2(x,y) \\ X_3(x,y)
		\end{pmatrix}\) satisfies
		\begin{align}
			X_3 = Q(x)X_1^2 + R(x),\ \ \ \ \
			X_2 = -2a^{-2}\frac{\sigma(\beta)\sigma(x+\beta-\alpha)}{\sigma(x+\beta)\sigma(\beta-\alpha)}e^{\zeta(\alpha)x}, \nonumber
		\end{align}
		where
		\begin{align}
			Q(x) &=  \frac{(m-a^2)\sigma(x+\beta)\sigma(\beta-\alpha)}{2\sigma(\beta)\sigma(x+\beta-\alpha)}e^{-\zeta(\alpha)x}, \nonumber\\
			R(x) &= \text{\small\( \frac{\sigma(\beta)\sigma(x+\beta-\alpha)}{\sigma(x+\beta)\sigma(\beta-\alpha)}e^{\zeta(\alpha)x}\left\{\frac{m-a^2}{2}x^2 - x + 4a^2q(x)\left(\frac{\sigma(\beta-\alpha)}{\sigma(\beta)}\right)^3 \right\}\)}. \nonumber
		\end{align}
		The image is shown in Figure~3.
	\end{itemize}
\end{theorem}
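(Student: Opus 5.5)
The plan is to work directly from the intermediate formulas in the proofs of Propositions \ref{prop4.3} and \ref{prop4.5}. There $f$ is written as $TK_1^{-1}\,\mathrm{diag}(e^{-kt},1,e^{kt})\,D^{-1}(Y_1^{(2)},Y_2^{(2)},Y_3^{(2)})^t$ in the case $M\neq 0$, and as $TK_2^{-1}P^{-1}(\text{unipotent})P(Y_j^{(2)})^t$ in the case $M=0$. Multiplying by $\gamma_j$, whose rightmost factors are $K_1T^{-1}$ or $PK_2T^{-1}$, cancels $TK_1^{-1}$ (resp. $TK_2^{-1}P^{-1}$). Hence $\gamma_j f$ becomes an explicit diagonal or triangular matrix applied to the vector of scalar solutions, and only two things remain to check: that $\gamma_j\in{\rm SL}_3\mathbb{R}$, and that the resulting components satisfy the stated relations.

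For the identities, consider first $M\neq0$. Since $\alpha_1=\alpha_3=\alpha$, the first and third entries of $D^{-1}(Y_j^{(2)})$ are $S(x)e^{(\zeta(\alpha)\pm k)x}$, where
\[
S(x)=\frac{\sigma(\beta)\sigma(x+\beta-\alpha)}{\sigma(\beta-\alpha)\sigma(x+\beta)}.
\]
Multiplying by $e^{\mp kt}$ gives $S(x)e^{\zeta(\alpha)x}e^{\mp iky}$, and the middle entry is $\frac{\sigma(\beta)\sigma(x+\beta+2\alpha)}{\sigma(\beta+2\alpha)\sigma(x+\beta)}e^{-2\zeta(\alpha)x}$.

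In case (i), $k=\sqrt{3M}$ is purely imaginary, so $e^{\mp iky}$ are real exponentials. The diagonal prefactor then gives $X_1X_3=\frac{i^2}{k^2a^2}S^2e^{2\zeta(\alpha)x}$, and $X_2$ is the middle entry times $i(m-a^2)/k$, with the sign to be fixed by the stated formula.

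In case (ii), $k$ is real. The matrix $\begin{pmatrix}2a^{-1}&0&2a^{-1}\\ \cdot&\cdot&\cdot\\ 2ia^{-1}&0&-2ia^{-1}\end{pmatrix}/2k$ maps $(e^{-iky},e^{iky})$ to $\frac{2}{ak}(\cos ky,\sin ky)$ up to sign. This gives $X_1^2+X_3^2=\frac{4}{a^2k^2}S^2e^{2\zeta(\alpha)x}$.

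In case (iii), I compute $P(\text{unipotent})P^{-1}$ applied to $P(Y_1,Y_2,Y_3)^t=(Y_1,\,iY_2,\,Y_2-4a^2cY_3)$, where $c=(\sigma(\beta-\alpha)/\sigma(\beta))^3$, with $Y_2=xY_1$ and $Y_3=q(x)Y_1$ evaluated at real argument. The components are then
\[
Y_1,\qquad -itY_1+ixY_1=yY_1,\qquad \tfrac{m-a^2}{2}t^2Y_1+i(m-a^2)t\,ixY_1+(x-4a^2cq)Y_1 .
\]
The third simplifies to $\bigl(\tfrac{m-a^2}{2}(x^2+y^2)+\text{terms in }x\bigr)Y_1$, because $\tfrac{1}{2}t^2-tx=-\tfrac{1}{2}(x^2+y^2)$ up to an imaginary part that must cancel. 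After the permutation/scaling matrix and the prefactor $-\sigma(\beta)/\sigma(\beta-\alpha)$, $X_2$ is proportional to $Y_1$, $X_1$ is proportional to $yY_1$, and $X_3$ is quadratic in $y$. Eliminating $y$ gives $X_3=Q(x)X_1^2+R(x)$, with $Q$ obtained as $\frac{m-a^2}{2}$ divided by the normalized $Y_1$.

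For the reality claims, the plan is to avoid computing $\gamma_j$ entrywise and instead show $\overline{\gamma_j}=\gamma_j$. We know $\overline{T}=\Delta T$ up to the relation $\overline{T}\Delta T^{-1}=I_3$, so $\overline{T^{-1}}=T^{-1}\Delta$. It therefore suffices to check that the left factors times $K_1$ intertwine with $\Delta$ under conjugation. In case (i), $\overline{i/k}=i/k$ since $k\in i\mathbb{R}$. Also $\overline{K_1}=\Delta K_1\Delta$ would hold if the $(1,1)$ and $(3,3)$ entries of $K_1$ were swapped by conjugation, which happens because $\overline{k}=-k$. Case (ii) uses $\overline{k}=k$, so $K_1$ is real, and the left matrix combines the first and third rows into real and imaginary parts. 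The determinants can be read off from $\det K_1$, which I would compute once; it should be proportional to $k^2/(m-a^2)$ times a constant, matching the normalizations $\frac{i}{k}$ and $\frac{1}{2k}$.

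I expect the main obstacle to be the bookkeeping of constants and signs. This includes $\det K_1$, $\det K_2$, and the $i$'s in case (iii), where $P$ contains an $i$ and $\gamma_3$ must be shown real using $\overline{T^{-1}}=T^{-1}\Delta$ together with an explicit check of $PK_2T^{-1}$. For that case I would simply multiply out $TK_2^{-1}P^{-1}$, which is given explicitly in the proof of Proposition \ref{prop4.5} and is visibly real, and then invert it.
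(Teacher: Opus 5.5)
Your derivation of the three relations follows the paper's proof: multiplying by $\gamma_j$ cancels $TK_1^{-1}$ (resp.\ $TK_2^{-1}P^{-1}$), and the components are read off. Two corrections there.

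In (iii), the identity $\tfrac12t^2-tx=-\tfrac12(x^2+y^2)$ is exact; there is no imaginary part to cancel. So the third entry of the unipotent factor applied to $P(Y_1^{(2)},Y_2^{(2)},Y_3^{(2)})^t$ is $\bigl(-\tfrac{m-a^2}{2}(x^2+y^2)+x-4a^2cq(x)\bigr)Y_1^{(2)}$, with a minus sign. The prefactor $-\sigma(\beta)/\sigma(\beta-\alpha)$ of $\gamma_3$ then gives exactly the stated $Q$ and $R$.

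In (i), you cannot fix the sign of $X_2$ ``by the stated formula''. Your computation forces $X_2=+\frac{i(m-a^2)\sigma(\beta)\sigma(x+\beta+2\alpha)}{k\sigma(x+\beta)\sigma(\beta+2\alpha)}e^{-2\zeta(\alpha)x}$. The paper's own proof displays the same plus sign, so the minus sign in the statement is a misprint and should be reported as one.

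The genuine problem is the reality/determinant step. This is where you depart from the paper, which simply writes out $K_1T^{-1}$, the corresponding product in (ii), and $PK_2T^{-1}$, and reads off reality and determinants. A symmetry argument is fine in principle, but both identities you propose are false, so it does not close as written.

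\begin{enumerate}
\item From $\overline{T}\Delta T^{-1}=I_3$ one gets $\overline{T}=T\Delta$, hence $\overline{T^{-1}}=\Delta T^{-1}$, not $T^{-1}\Delta$. Write $\gamma_j=L_jK_1T^{-1}$ with $L_j$ the left factor. Then reality is equivalent to $\overline{L_jK_1}\,\Delta=L_jK_1$, i.e.\ conjugation must interchange the first and third \emph{columns}.
\item $K_1$ is centrosymmetric ($\Delta K_1\Delta=K_1$). So your claimed $\overline{K_1}=\Delta K_1\Delta$ would mean $K_1$ is real, which fails for $k\in i\mathbb{R}\setminus\{0\}$. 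Concretely, $\overline{K_{11}}=-\tfrac{a(k+m+a^2)}{2}$ equals $K_{13}$, not $K_{33}=K_{11}$.
\end{enumerate}

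The correct relation for $\bar k=-k$ is $\overline{K_1}=K_1\Delta$. Together with $i/k\in\mathbb{R}$, this gives $\overline{\gamma_1}=\gamma_1$. In (ii), $K_1$ is real, and $\overline{L_2}\Delta=L_2$ together with $K_1\Delta=\Delta K_1$ gives $\overline{\gamma_2}=\gamma_2$.

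For the determinants, $\det K_1=-a^2k^3/(m-a^2)$, using $m^2-a^4=k^2+2a^{-2}$. This is cubic in $k$, not quadratic as you guessed, and you also need $\det T=i$. Then $\det\gamma_1=(i/k)^3a^{-2}(m-a^2)\det K_1\cdot(-i)=1$, and likewise $\det\gamma_2=1$.

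In (iii), your plan of inverting the explicit $TK_2^{-1}P^{-1}$ works. Note only that this matrix is real up to the scalar $\sigma(\beta)/\sigma(\beta-\alpha)$, which cancels against the prefactor of $\gamma_3$.
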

\begin{proof}
	(i) If \(M < 0\), then \(ik \in \mathbb{R}\). Since
	\scriptsize
	\begin{equation}
		K_1T^{-1} = -\frac{\sqrt{3}a}{6}\left(\begin{array}{ccc}
			2(m+a^2-a^{-1}) & -(m+a^2+2a^{-1}) + i\sqrt{3}k & -m-a^2-2a^{-1} - i\sqrt{3}k \\
			-\frac{2(m-a^2-2a^{-1})}{a(m-a^2)} & -\frac{2(m-a^2+a^{-1})}{a(m-a^2)} & -\frac{2(m-a^2+a^{-1})}{a(m-a^2)} \\
			2(m+a^2-a^{-1}) & -(m+a^2+2a^{-1}) - i\sqrt{3}k & -m-a^2-2a^{-1} + i\sqrt{3}k
		\end{array}\right) \nonumber
	\end{equation}
	\normalsize
	and \({\rm det}(K_1T^{-1}) = \frac{ik^3a^2}{m-a^2}\), we have \(\gamma_1 \in {\rm SL}_3 \mathbb{R}\) and
	\begin{align}
		\begin{pmatrix}
			X_1 \\ X_2 \\ X_3
		\end{pmatrix} &= \gamma_1f \nonumber\\
		&= \frac{i}{k}\left(\begin{array}{ccc}
			a^{-1} & 0  & 0  \\
			0 & m-a^2 & 0  \\
			0 	& 0  & a^{-1}
		\end{array}\right)\left(\begin{array}{ccc}
			e^{-kt} & 0 & 0 \\
			0 & 1 & 0 \\
			0 & 0 & e^{kt}
		\end{array}\right)D^{-1}\begin{pmatrix}
			Y_1^{(2)}(x) \\ Y_2^{(2)}(x) \\ Y_3^{(2)}(x)
		\end{pmatrix} \nonumber\\ 
		&= \frac{i\sigma(\beta)}{k\sigma(x+\beta)}\begin{pmatrix}
			a^{-1}\frac{\sigma(x+\beta-\alpha)}{\sigma(\beta-\alpha)}e^{\zeta(\alpha)x}e^{-iky} \\
			(m-a^2)\frac{\sigma(x+\beta+2\alpha)}{\sigma(\beta+2\alpha)}e^{-2\zeta(\alpha)x}\\
			a^{-1}\frac{\sigma(x+\beta-\alpha)}{\sigma(\beta-\alpha)}e^{\zeta(\alpha)x}e^{iky}
		\end{pmatrix}. \nonumber
	\end{align}
	Hence, we obtain (i).\vskip\baselineskip
	
	\noindent (ii) If \(M > 0\), then \(k \in \mathbb{R}\). Since
	\begin{align}
		&\left(\begin{array}{ccc}
			2a^{-1} & 0 & 2a^{-1} \\
			0 & m-a^2 & 0 \\
			2ia^{-1} & 0 & -2ia^{-1}
		\end{array}\right)K_1T^{-1} \nonumber\\
		&= -\frac{\sqrt{3}}{3}\left(\begin{array}{ccc}
			4(m+a^2-a^{-1}) & -2(m+a^2+2a^{-1}) & -2(m+a^2+2a^{-1}) \\
			-(m-a^2-2a^{-1}) & -(m-a^2+a^{-1}) & -(m-a^2+a^{-1}) \\
			0 & -2\sqrt{3}k & 2\sqrt{3}k
		\end{array}\right) \nonumber
	\end{align}
	and its determinant is \(8k^3\), we obtain \(\gamma_2 \in {\rm SL}_3 \mathbb{R}\) and
	\begin{align}
		\begin{pmatrix}
			X_1 \\ X_2 \\ X_3
		\end{pmatrix} &= \gamma_2f \nonumber\\
		&= \text{\small \(\frac{1}{2k}\left(\begin{array}{ccc}
				2a^{-1} & 0 & 2a^{-1} \\
				0 & m-a^2 & 0 \\
				2ia^{-1} & 0 & -2ia^{-1}
			\end{array}\right)\left(\begin{array}{ccc}
				e^{-kt} & 0 & 0 \\
				0 & 1 & 0 \\
				0 & 0 & e^{kt}
			\end{array}\right)D^{-1}\begin{pmatrix}
				Y_1^{(2)}(x) \\ Y_2^{(2)}(x) \\ Y_3^{(2)}(x)
			\end{pmatrix}\)} \nonumber\\  
		&= \frac{\sigma(\beta)}{k\sigma(x+\beta)}\begin{pmatrix}
			2a^{-1}\frac{\sigma(x+\beta-\alpha)}{\sigma(\beta-\alpha)}e^{\zeta(\alpha)x}\cos{(ky)}\\
			(m-a^2)\frac{\sigma(x+\beta+2\alpha)}{2\sigma(\beta+2\alpha)}e^{-2\zeta(\alpha)x}\\
			2a^{-1}\frac{\sigma(x+\beta-\alpha)}{\sigma(\beta-\alpha)}e^{\zeta(\alpha)x}\sin{(ky)}
		\end{pmatrix}. \nonumber
	\end{align}
	Hence, we obtain (ii).\vskip\baselineskip
	
	\noindent (iii) If \(M = 0\), then \(k=0\). Since
	\begin{equation}
		PK_2T^{-1} =
		\text{\footnotesize\( \frac{\sqrt{3}a\sigma(\beta-\alpha)}{6\sigma(\beta)}\left(\begin{array}{ccc}
				-2(m+a^2-a^{-1}) & m+a^2+2a^{-1} & m+a^2+2a^{-1}\\
				0 & \sqrt{3} & -\sqrt{3} \\
				-2 & 1 & 1
			\end{array}\right)\)} \nonumber
	\end{equation}
	and \({\rm det}(PK_2T^{-1}) = \frac{a^2}{2}\left(\frac{\sigma(\beta-\alpha)}{\sigma(\beta)}\right)^3\), we obtain \(\gamma_3 \in {\rm SL}_3 \mathbb{R}\) and
	\begin{align}
		\begin{pmatrix}
			X_1 \\ X_2 \\ X_3
		\end{pmatrix} &= \gamma_3f \nonumber\\
		&= \text{\footnotesize \(-\frac{\sigma(\beta)}{\sigma(\beta-\alpha)}\left(\begin{array}{cccc}
				0 & 1 & 0 \\
				2a^{-2} & 0 & 0 \\
				0 & 0 & 1
			\end{array}\right)\left(\begin{array}{ccc}
				1 & 0 & 0 \\
				-it & 1 & 0 \\
				\frac{(m-a^2)}{2}t^2 & i(m-a^2)t & 1
			\end{array}\right)P\begin{pmatrix}
				Y_1^{(2)}(x) \\ Y_2^{(2)}(x) \\ Y_3^{(2)}(x)
			\end{pmatrix}\)} \nonumber\\
		&=
		-\text{\scriptsize\( \frac{\sigma(\beta)\sigma(x+\beta-\alpha)}{\sigma(x+\beta)\sigma(\beta-\alpha)}e^{\zeta(\alpha)x}\begin{pmatrix}
				y \\
				2a^{-2} \\
				-\frac{m-a^2}{2}y^2 -\frac{m-a^2}{2}x^2 + x -4a^2q(x)\left(\frac{\sigma(\beta-\alpha)}{\sigma(\beta)}\right)^3
			\end{pmatrix}\)}. \nonumber
	\end{align}
	Hence, we obtain (iii).
\end{proof}\vskip\baselineskip

\begin{figure}[H]
	\centering
	
	\begin{minipage}{0.6\textwidth}
		\centering
		\begin{tikzpicture}
			\useasboundingbox (-2.5,-2.5) rectangle (4,2.5);
			
			\draw[white] (-4,-3) grid (5,3);
			
			\draw[->] (-4,0)--(4,0);
			\draw[->] (0,-2.5)--(0,2.5);
			
			\draw (1,0)--(1,0.8);
			\draw (0,0.8)--(1,0.8);
			\draw (-1,0)--(-1,-0.8);
			\draw (0,-0.8)--(-1,-0.8);
			
			\draw[thick,domain = 0.3:3, smooth] plot(\x,{0.8/\x});
			\draw[thick,domain = -0.3:-3, smooth] plot(\x,{0.8/\x});
			
			\draw (4,0) node[right] {\(X_1\)};
			\draw (0,2.5) node[above] {\(X_3\)};
			
			\draw (1,0) node[below] {\(1\)};
			\draw (0,0.8) node[left] {\small \(-\frac{1}{a^2k^2}\left(\frac{\sigma(\beta)\sigma(x+\beta-\alpha)}{\sigma(x+\beta)\sigma(\beta-\alpha)}e^{\zeta(\alpha)x}\right)^2\)};
			
			\draw (-1,0) node[above] {\(-1\)};
			\draw (0,-0.8) node[right] {\small \(\frac{1}{a^2k^2}\left(\frac{\sigma(\beta)\sigma(x+\beta-\alpha)}{\sigma(x+\beta)\sigma(\beta-\alpha)}e^{\zeta(\alpha)x}\right)^2\)};
		\end{tikzpicture}
		\caption{A slice of \(\gamma_1f\) for \(M<0\)}
	\end{minipage}
\end{figure}


\begin{figure}[H]
	\centering
	
	\begin{minipage}{0.6\textwidth}
		\centering
		\begin{tikzpicture}
			\useasboundingbox (-2.5,-2.5) rectangle (4,2.5);
			
			\draw[white] (-4,-3) grid (5,3);
			
			\draw[->] (-4,0)--(4,0);
			\draw[->] (0,-2.5)--(0,2.5);
			
			\draw (2,0)--(2,0.2);
			
			\draw[thick] (0,0) circle[radius=2];
			
			\draw (4,0) node[right] {\(X_1\)};
			\draw (0,2.5) node[above] {\(X_3\)};
			
			\draw (1.9,0.5) node[right] {\small \(\frac{2\sigma(\beta)\sigma(x+\beta-\alpha)}{a|k|\sigma(x+\beta)\sigma(\beta-\alpha)}e^{\zeta(\alpha)x}\)};
		\end{tikzpicture}
		\caption{A slice of \(\gamma_2f\) for \(M>0\)}
	\end{minipage}
\end{figure}

\begin{figure}[H]
	\centering
	
	\begin{minipage}{0.6\textwidth}
		\centering
		\begin{tikzpicture}
			\useasboundingbox (-2.5,-2.5) rectangle (4,2.5);
			
			\draw[white] (-4,-3) grid (5,3);
			
			\draw[->] (-4,0)--(4,0);
			\draw[->] (0,-2.5)--(0,2.5);
			
			\draw (1,0)--(1,1);
			\draw (0,1)--(1,1);
			
			\draw[thick,domain = -2:2, smooth] plot(\x,{0.5*\x*\x+0.5});
			
			\draw (4,0) node[right] {\(X_1\)};
			\draw (0,2.5) node[above] {\(X_3\)};
			
			\draw (1,0) node[below] {\(1\)};
			\draw (0,0.3) node[right] {\small \(R\)};
			
			\draw (0.08,1.3) node[left] {\footnotesize \(Q+R\)};
		\end{tikzpicture}
		\caption{A slice of \(\gamma_3f\) for \(M=0\)}
	\end{minipage}
\end{figure}

As a consequence, we obtain the following classification result.
\begin{cor}\label{cor4.7}
	Every translationally invariant solution of the Tzitz\'eica equation gives rise to a hyperbolic affine sphere equiaffinely equivalent to an affine sphere whose slice curve is a circle, a hyperbola, or a parabola.
\end{cor}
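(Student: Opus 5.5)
The argument reduces Corollary \ref{cor4.7} to Theorem \ref{thm4.6} by showing that every translationally invariant solution of the Tzitz\'eica equation arises from the Delaunay-type potential for suitable parameters \(a>0\), \(m\in\mathbb{R}\). We also need a preliminary normalization.

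\emph{Step 1: normalize the equation.} Let \(w=w(x)\) be a translationally invariant solution of \(w_{t\bar t}=e^w-e^{-2w}\). Shifting \(x\) and replacing \(t\) by \(-t\) if needed, we may assume the base point \(x=0\) lies in the domain. Consider \(u=w+c\) together with a rescaling of \(t\). Since \(e^w=a^{-2}e^{u}\) and \(e^{-2w}=a^4e^{-2u}\) require \(e^{c}=a^{2}\), the choice \(c=2\log a\) turns the equation into \(u_{t\bar t}=a^{-2}e^u-a^4e^{-2u}\). We must also match the initial condition \(u(0)=0\) of the proposition in Section~3.1, so \(a\) is fixed by \(a^{2}=e^{-w(0)}\), that is, \(a=e^{-w(0)/2}>0\). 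Then set \(m=u_t(0)=\tfrac12 w_x(0)\in\mathbb{R}\).

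\emph{Step 2: identify the solution.} By that proposition, the potential \(\xi\) with these \(a,m\) produces a solution of the same ODE \(u_{xx}=4a^{-2}e^u-4a^4e^{-2u}\) with the same initial data. Uniqueness for the ODE then shows it equals our normalized solution on the connected component of \(\mathbb{R}\setminus\tilde\Lambda\) containing \(0\). By Proposition~\ref{prop2.1}, the corresponding affine sphere is \(f=TF(0,1,0)^t\), with \(F\) supplied by Theorem~\ref{thm3.4}. An arbitrary affine sphere coming from \(w\) differs from this one only by an equiaffine motion and by the choice of \(\lambda\in S^1\). Here we use that the frame is determined up to a constant initial value, so the sphere is determined up to \(\mathrm{SL}_3\mathbb{R}\).

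\emph{Step 3: the main obstacle, the spectral parameter.} Theorem~\ref{thm4.6} is proved only at \(\lambda=-i\). I would remove this restriction via the rotation \(t\mapsto \mu t\), \(|\mu|=1\), which intertwines the \(\lambda\)-family with a rotated coordinate. Concretely, the associated family \(f(\cdot,\lambda)\) is obtained from the cubic differential multiplied by \(\lambda^{-3}\). For the equivariant case the relevant object is the one-parameter subgroup \(T\phi(iy)T^{-1}\), and the slice curve is the orbit of a point under this subgroup. So the key claim to verify is that the conjugacy type of the real matrix \(T X(\lambda)T^{-1}\), restricted to \(y\)-translations, depends only on the sign of \(M\). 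This matrix is elliptic with a fixed axis, hyperbolic, or unipotent.

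To check this, I would use the characteristic polynomial of \(X\). By the relation \(k^3-3Mk=i(\lambda^3+\lambda^{-3})\) from \eqref{D}, the eigenvalues at general \(\lambda\) differ from those at \(\lambda=-i\) only by a real shift in the \(x\)-direction. Equivalently, we can choose \(\lambda=-i\) after reparametrizing by a translation in \(x\), which does not change \(M\) since \(M\) depends only on \(a,m\). If this reduction is delicate, it suffices to state the corollary for the sphere at \(\lambda=-i\), which is the convention fixed in Section~4.2.

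\emph{Step 4: apply Theorem~\ref{thm4.6}.} Split into three cases. If \(M>0\), then \(\gamma_2f\) has slices \(X_2=\)const lying on circles \(X_1^2+X_3^2=r(x)^2\). If \(M<0\), then \(\gamma_1 f\) has slices on the curves \(X_1X_3=-c(x)^2\); since \(X_3=\overline{X_1}\) up to the factor \(i\), these are hyperbolas in real coordinates. If \(M=0\), then \(\gamma_3 f\) gives parabolas \(X_3=Q X_1^2+R\). Since \(\gamma_j\in\mathrm{SL}_3\mathbb{R}\), in each case the sphere is equiaffinely equivalent to one whose slice curve is a circle, hyperbola, or parabola, which proves the corollary.
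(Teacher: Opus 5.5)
\textbf{The gap is Step 3.} Your Steps 1, 2 and 4 are the paper's argument, with the matching \(a=e^{-w(0)/2}\), \(m=\tfrac12 w_x(0)\) made explicit. The paper likewise identifies the Maurer--Cartan form of \(w\) with the Delaunay one, uses uniqueness of the frame up to a constant in \(G\) (an element of \({\rm SL}_3\mathbb{R}\) after conjugating by \(T\)), and reads the conics off Theorem~\ref{thm4.6}.

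Step 3, however, fails. The claim that the conjugacy type of the \(y\)-translation generator depends only on the sign of \(M\) is false. The real generator of \(y\mapsto T\phi(iy)T^{-1}\) is \(A=T\,(iX(\lambda))\,T^{-1}\); note that \(TXT^{-1}\) itself is not real. From \(\det(\mu I-X)=\mu^3-3M\mu+i(\lambda^3+\lambda^{-3})\) one gets, for \(\lambda=e^{i\theta}\),
\[
\det(\nu I-A)=\nu^3+3M\nu+2\cos 3\theta .
\]
The constant term vanishes exactly when \(\lambda^3=\pm i\), for instance \(\lambda=-i\). That vanishing is what gives the eigenvalue \(0\), the invariant planes \(X_2=\mathrm{const}\), and the rotation/boost/unipotent trichotomy.

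If \(\cos3\theta\neq0\), then \(\det A=-2\cos3\theta\neq0\). In that case the orbit of a cyclic vector cannot lie in an affine plane: differentiating \(\ell(e^{sA}p)=c\) forces \(\ell\) to vanish on \(Ap,A^2p,A^3p\), which form a basis. So generic slice curves are non-planar. For example, with \(M=0\) the equation \(\nu^3=-2\cos3\theta\) produces spirals, not parabolas.

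Neither of your two devices can move \(\lambda\) to \(-i\).
\begin{itemize}
\item Relation \eqref{D} does not give a ``real shift in the \(x\)-direction''. An \(x\)-translation only replaces \((a,m)\) by another pair with the same \(M\) and leaves \(\lambda\) untouched.
\item The rotation \(t\mapsto\mu t\) rotates the direction of translation invariance, so it leaves the setting of Theorem~\ref{thm4.6}.
\end{itemize}

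Consequently, the conclusion fails for the other members of the associated family, and your ``fallback'' is not a convenience but the actual content. The statement must be read as the paper reads it: choose the cubic differential, that is, its phase relative to the invariant direction, so that the Maurer--Cartan form is exactly the Delaunay one at \(\lambda=-i\). With Step 3 replaced by that choice, your proof coincides with the paper's.

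A minor correction to Step 4: when \(M<0\), \(ik\) is real, so \(X_1\) and \(X_3\) are already real. Then \(X_1X_3=-(ak)^{-2}(\cdots)^2>0\) is directly a hyperbola, and the remark about \(X_3=\overline{X_1}\) should be dropped.
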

\begin{proof}
	Choose a conformal coordinate \(t=x+iy\) and a cubic differential so that the conformal factor depends only on \(x\) and the resulting Maurer--Cartan form is precisely the one generated by a Delaunay-type potential with \(\lambda=-i\). By the fundamental theorem of affine differential geometry, affine spheres with the same Maurer--Cartan form are equiaffinely equivalent. The conclusion now follows from Theorem \ref{thm4.6}.
\end{proof}\vskip\baselineskip

This classification agrees with the Calabi conjecture for affine spheres \cite{C1972}. the Calabi-Cheng-Yau theorem on affine spheres establishes a one-to-one correspondence between hyperbolic affine spheres and proper convex cones in \(\mathbb{R}^3\) \cite{CY1986}. In particular, conic sections are classified into ellipses, hyperbolas, and parabolas. Hence, our result can be viewed as a concrete realization of this correspondence in the translationally equivariant setting.

	\section*{Acknowledgement}
	Dedicated to my father, Professor Seiichi Udagawa, on the occasion of his retirement. This paper is a part of the outcome of research performed under a Waseda University Grant for Special Research Projects (Project number: 2026C-087).
	
	\section*{Conflict of interests}
	The author has no conflicts to disclose.

\bibliography{mybibfile}

\begin{thebibliography}{10}

\bibitem{BD2001}
V.~Balan and J.~Dorfmeister.
\newblock {Birkhoff decomposition and Iwasawa decomposition for loop groups}.
\newblock {\em Tohoku Math. J.}, 53:593--615, 2001.

\bibitem{BRS2010}
D.~Brander, W.~Rossman, and N.~Schmitt.
\newblock {Holomorphic representation of constant mean curvature surfaces in
  Minkowski space: consequences of non-compactness in loop group methods}.
\newblock {\em Adv. Math.}, 223(3):949--986, 2010.

\bibitem{C1972}
E.~Calabi.
\newblock {Complete affine hyperspheres. I}.
\newblock In {\em {Eugenio Calabi---Collected Works}}, pages 453--472.
  Springer, 2020.

\bibitem{CY1986}
S.-Y. Cheng and S.-T. Yau.
\newblock {Complete affine hypersurfaces. Part I. The completeness of affine
  metrics}.
\newblock In {\em {Selected Works of Shing-Tung Yau. Part 1. 1971--1991. Vol.
  2. Metric geometry and harmonic functions}}, pages 13--40. Int. Press, 2019.

\bibitem{DE2001}
J.~Dorfmeister and U.~Eitner.
\newblock {Weierstra{\ss}-type representation of affine spheres}.
\newblock {\em Abh. Math. Sem. Univ. Hamburg}, 71:225--250, 2001.

\bibitem{DGR2010}
J.~Dorfmeister, M.~Guest, and W.~Rossman.
\newblock {The tt* structure of the quantum cohomology of $\mathbb C$$P^1$ from
  the viewpoint of differential geometry}.
\newblock {\em Asian J. Math.}, 14(3):417--437, 2010.

\bibitem{DM2016}
J.~Dorfmeister and H.~Ma.
\newblock {Explicit expressions for the Iwasawa factors, the metric and the
  monodromy matrices for minimal Lagrangian surfaces in $\mathbb{CP}^2$}.
\newblock In {\em {Dynamical Systems, Number Theory and Applications}}, pages
  19--47. World Sci. Publ., 2016.

\bibitem{DPW1998}
J.~Dorfmeister, F.~Pedit, and H.~Wu.
\newblock Weierstrass type representation of harmonic maps into symmetric
  spaces.
\newblock {\em Comm. Anal. Geom.}, 6:633--668, 1998.

\bibitem{F1991}
O.~Forster.
\newblock {\em {Lectures on Riemann surfaces, Translated from the 1997 German
  original by Bruce Gilligan. Reprint of the 1981 English translation}}.
\newblock Grad. Texts in Math. Springer-Verlag, 1991.

\bibitem{H2014}
R.~Hildebrand.
\newblock Analytic formulas for complete hyperbolic affine spheres.
\newblock {\em Beitr. Algebra Geom.}, 55(2):497--520, 2014.

\bibitem{H2022}
R.~Hildebrand.
\newblock {Self-associated three-dimensional cones}.
\newblock {\em Beitr. Algebra Geom.}, 63(4):867--906, 2022.

\bibitem{LW2016}
Z.~Lin and E.~Wang.
\newblock {The associated families of semi-homogeneous complete hyperbolic
  affine spheres}.
\newblock {\em Acta Math. Sci. Ser. B (Engl. Ed.)}, 36(3):765--781, 2016.

\bibitem{SW1993}
U.~Simon and C.~P. Wang.
\newblock {Local theory of affine 2-spheres}.
\newblock In {\em {Differential Geometry: Riemannian Geometry (Los Angeles, CA,
  1990)}}, volume~54 of {\em Proc. Sympos. Pure Math.}, pages 585--598. Amer.
  Math. Soc., Providence, RI, 1993.

\bibitem{U2026}
T.~Udagawa.
\newblock {The Iwasawa factorization with rotationally symmetric parts and the
  Lam\'e equation}.
\newblock {\em Tohoku Math. J.}, to appear.

\bibitem{U2024}
T.~Udagawa.
\newblock {Globality of the DPW construction for Smyth potentials in the case
  of ${\rm SU}_{1,1}$}.
\newblock {\em Differ. Geom. Appl.}, 97:\ Paper No. 102211, 30 pp, 2024.

\end{thebibliography}
\bibliographystyle{plain}

	\em
	\noindent
	Department of Applied Mathematics\newline
	Faculty of Science and Engineering\newline
	Waseda University\newline
	3-4-1 Okubo, Shinjuku, Tokyo 169-8555\newline
	JAPAN

\end{document}